\definecolor{seaolive}{RGB}{46,139,87}
\definecolor{seagreen}{RGB}{46,139,87}
\definecolor{maroon}{RGB}{128,0,0}
\definecolor{darkviolet}{RGB}{148,0,211}
\definecolor{twelve}{RGB}{100,100,170}
\definecolor{thirteen}{RGB}{100,150,50}
\definecolor{fourteen}{RGB}{200,0,0}
\definecolor{fifteen}{RGB}{0,200,0}
\definecolor{sixteen}{RGB}{0,0,200}
\definecolor{seventeen}{RGB}{200,0,200}
\definecolor{eighteen}{RGB}{0,200,200}
\newcommand{\xMapsto}[2][]{\ext@arrow 0599{\Mapstofill@}{#1}{#2}}
\def\Mapstofill@{\arrowfill@{\Mapstochar\Relbar}\Relbar\Rightarrow}
\newcommand{\hana}[1]{{\color{violet}{#1}}}
\newtheorem{thm}{Theorem}[section]
\newtheorem*{theorem*}{Theorem}
\newtheorem*{conjecture*}{Conjecture}
\newtheorem*{corollary*}{Corollary}
\newtheorem{cor}[thm]{Corollary}
\newtheorem{prop}[thm]{Proposition}
\theoremstyle{definition}
\newtheorem{defin}[thm]{Definition}
\newtheorem{exm}[thm]{Example}
\newtheorem{rem2}[thm]{Remark}
\newtheorem*{rem2*}{Remark}
\newtheorem{thmx}{Theorem}
\def\a{\mathbb{A}}
\def\c{\mathbb{C}}
\def\f{\mathbb{F}}
\def\q{\mathbb{Q}}
\def\r{\mathbb{R}}
\def\s{\mathbb{S}}
\def\z{\mathbb{Z}}
\def\cofib{\operatorname{cofib}}
\def\fib{\operatorname{fib}}
\def\ker{\operatorname{ker}}
\def\coker{\operatorname{coker}}
\def\denom{\operatorname{denom}}
\definecolor{DarkBlue}{rgb}{.1, 0.35, 0.6} 
\definecolor{DarkBrown}{rgb}{.5, 0.2, 0.2} 
\author{Hana Jia Kong}\address{Harvard University}\email{hana.jia.kong@gmail.com}
\author{J.D. Quigley}\address{University of Virginia}\email{mbp6pj@virginia.edu}
\title[The slice spectral sequence for $L$ over prime fields]{The slice spectral sequence for a motivic analogue of the connective $K(1)$-local sphere}
\begin{document}
\maketitle

\begin{abstract}
We compute the slice spectral sequence for the motivic stable homotopy groups of $L$, a motivic analogue of the connective $K(1)$-local sphere over prime fields of characteristic not two. Together with the analogous computation over algebraically closed fields, this yields information about the motivic $K(1)$-local sphere over arbitrary base fields of characteristic not two. To compute the slice spectral sequence, we prove several results which may be of independent interest.  We describe the $d_1$-differentials in the slice spectral sequence in terms of the motivic Steenrod operations over general base fields, building on analogous results of Ananyevskiy, R{\"o}ndigs, and {\O}stv{\ae}r for the very effective cover of Hermitian K-theory. We also explicitly describe the coefficients of certain motivic Eilenberg--MacLane spectra and compute the slice spectral sequence for the very effective cover of Hermitian K-theory over prime fields. 

\end{abstract}

\tableofcontents

\section{Introduction}

Motivic stable homotopy theory, or the stable homotopy theory of algebraic varieties, was developed by Morel and Voevodsky in \cite{MV99} to apply powerful techniques from stable homotopy theory to problems in algebraic geometry and number theory.  Since motivic stable homotopy groups are the universal stable $\a^1$-invariant, a great deal of effort has gone into understanding the \emph{motivic stable stems}, i.e., the motivic stable homotopy groups of the unit in the motivic stable homotopy category. This paper studies certain patterns in an approximation to the motivic stable stems over prime fields of characteristic not two. Using analogous results over algebraically closed fields, this establishes the existence of similar patterns over arbitrary base fields of characteristic not two. 

To situate our results, we briefly survey the existing analyses of the motivic stable stems. Over arbitrary base fields, the low-dimensional Milnor--Witt stems\footnote{The $n$-th Milnor--Witt stem is the sum over $i \in \z$ of $\pi_{n+i,i}^F(\s)$, where $\s$ is the unit. See \Cref{SS:Conventions} for our indexing conventions on motivic stable homotopy groups.} are connected to Milnor--Witt K-theory \cite{Mor12} and Milnor K-theory, Hermitian K-theory, and motivic cohomology \cite{RSO19, RSO21}. Over certain base fields, various completions of the motivic stable stems have been computed in larger ranges \cite{BCQ21, BI22, DI10, DI16, Isa19, IWX20, OO14, Wil16, WO17}. 

The most well-studied pattern in the motivic stable stems is $\eta$-periodicity, where $\eta$ is the first motivic Hopf map. The $\eta$-periodic motivic stable stems have been computed over various base fields \cite{AM17, CQ21, GI15, GI16, OR20, Wil18}. Over general base fields and Dedekind domains, the $\eta$-periodic sphere spectrum sits in a fiber sequence with connective Witt theory \cite{BH20, Bac22}. 

The pattern we study in this paper, $v_1$-periodicity, is much less well-understood. The $v_1$-periodic motivic stable stems have been computed over the algebraically closed fields of characteristic zero \cite{CQ21}, and a small number of $v_1$-periodic families have been produced over general base fields \cite{Qui21c, Qui21b}. In \cite{BIK22}, the coefficients of a motivic spectrum $L$ which captures the $v_1$-periodic phenomena in the motivic stable stems (cf. \Cref{Rmk:JQvsL} and \Cref{Rmk:Ltov1}) were computed over the complex numbers and the real numbers. 

Our main result (\Cref{MT:L}) is a computation of the coefficients of $L$ over all prime fields of characteristic not two and all algebraically closed fields. Since any field sits between a prime field and an algebraically closed field in a sequence of field extensions, our computations identify the piece of the coefficients of $L$ common across every base field of characteristic not two. 

\subsection{Summary of results}

From now on, we work in the $2$-complete setting and only work over base fields $F$ of characteristic not two. In \cite{BH20}, Bachmann and Hopkins defined Adams operations on Hermitian K-theory and its very effective cover \cite{ARO17}
$$\psi^3: KQ \to KQ, \quad \psi^3 : kq \to kq.$$
Recently, Balderrama, Ormsby, and the second author \cite{BOQ23} have proven that
\begin{equation}\label{Eqn:FibL}
L_{KGL/2}\s \simeq \fib(\psi^3-1: KQ \to KQ),
\end{equation}
where $KGL/2$ is mod two algebraic K-theory and $L_{(-)}$ denotes Bousfield localization. This is a motivic analogue of the classical identification of the $K(1)$-local sphere,
$$L_{K(1)}\s \simeq \fib(\psi^3-1: KO \to KO).$$
Following \cite{BIK22}, we define a motivic spectrum $L$ by
$$L := \fib(\psi^3-1: kq \to kq).$$ 
The motivic spectrum $L$ is related to a motivic analogue of the connective $K(1)$-local sphere as follows:

\begin{rem2}\label{Rmk:JQvsL}
We can identify the very effective cover functor $\tilde f_0$ with $f_0\tau_{\geq 0},$ the effective cover $f_0$ of the connective cover $\tau_{\geq 0}$ with respect to the homotopy $t$-structure (e.g. \cite[Rem.2]{ARO17}). The functor $f_0$ is triangulated but $\tau_{\geq 0}$ is not, so the very effective cover functor $\tilde f_0$ does not preserve fiber sequences and
$L$ is not equivalent to the very effective cover of $\tilde{f}_0 L_{KGL/2} \s.$ However, the two are closely related. 

Consider the fiber sequences 
$L_{KGL/2} \s \to KQ \to KQ$ and $F \to \tau_{\geq 0}KQ \to \tau_{\geq 0}KQ$. Since $f_0$ is triangulated, we have $f_0 F = L.$ There is a map $\tau_{\geq 0} L_{KGL/2} \to F$ whose cofiber $D$ has homotopy groups concentrated in homotopy $t$-structure degree $-1$: 
$$\pi_{*-1, *}^F (D)= \coker(\pi_{*,*}^F KQ\xrightarrow{\psi^3-1} \pi_{*,*}^F KQ).$$ 
Passing to effective covers then gives a comparison between $\tilde f_0 L_{KGL/2}$ and $L$. 
\end{rem2}

In this paper, we compute the coefficients of $L$ using the slice spectral sequence. Our main result is the following:

\begin{thmx}\label{MT:L}
The groups $\pi_{**}^F(L)$ are described in \Cref{Sec:L} for the following base fields $F$:
\begin{enumerate}
\item For $F = \bar{F}$ algebraically closed in \Cref{SS:CL}. 
\item For $F = \f_q$ a finite field of characteristic not two in \Cref{SS:FqL}.
\item For $F = \q_q$ in \Cref{SS:QqL} ($q$ odd) and \Cref{SS:Q2L} ($q=2$). 
\item For $F = \r$ in \Cref{SS:RL}. 
\item For $F = \q$ in \Cref{SS:QL}. 
\end{enumerate}
\end{thmx}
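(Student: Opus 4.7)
The plan is to compute $\pi_{**}^F(L)$ by running the slice spectral sequence for $L$ over each base field $F$ listed, case by case. The starting point is that $L = \fib(\psi^3-1 : kq \to kq)$ and the effective cover functor $f_0$ is triangulated, so this fiber sequence descends to a fiber sequence of slice towers. Consequently the slices $s_n(L)$ are determined by the slices $s_n(kq)$ (due to work of R\"ondigs--{\O}stv{\ae}r and others) together with the induced map $s_n(\psi^3-1)$. I would first record explicit descriptions of $s_n(L)$ as shifts of motivic Eilenberg--MacLane spectra with prescribed coefficient modules, so that the $E_2$-page of the slice spectral sequence becomes the motivic cohomology of $\Spec F$ with computable coefficients.

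Next I would identify the differentials. The $d_1$-differentials are the crucial input: they are read off the motivic Steenrod operations, using the extension of the Ananyevskiy--R{\"o}ndigs--{\O}stv{\ae}r description for $kq$ to general base fields that the abstract promises to prove. The defining fiber sequence of $L$ together with naturality reduces the $d_1$ computation for $L$ to the $d_1$ computation for $kq$ composed with $s_n(\psi^3-1)$. For higher differentials, I would combine sparseness of the $E_2$-page, comparison with the $\c$-motivic and $\r$-motivic computations of $L$ from \cite{BIK22} via base change and topological realization, and the multiplicative structure of the spectral sequence to force collapse past an explicit page and to resolve the remaining potential differentials.

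Then I would implement the program for each base field in turn. For $F = \bar F$ algebraically closed the mod-$2$ motivic cohomology ring is $\f_2[\tau]$ in the relevant bidegrees, so the $E_2$-page and answer closely parallel the classical $K(1)$-local computation. For $F = \f_q$, $\q_q$, and $\r$ the motivic cohomology is controlled by Milnor and Milnor--Witt K-theory, producing additional $\tau$-free families coming from genuine arithmetic classes. For $F = \q$ I would assemble the answer from the computations over $\q_p$ for odd $p$, over $\q_2$, and over $\r$ via the standard arithmetic fracture square applied termwise in the slice spectral sequence, using the computation over $\bar\q$ to control the common part.

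The main obstacle I expect is producing a uniform and correct description of the $d_1$-differentials over an arbitrary base field. Over algebraically closed fields the motivic Steenrod algebra is close to its topological counterpart, but over prime fields there are genuine motivic Steenrod operations acting nontrivially on classes coming from Milnor K-theory, and matching these with the slice $d_1$ for $L$ requires the full strength of the generalized motivic Steenrod framework developed earlier in the paper. A secondary difficulty is resolving hidden extensions, especially over $\q_2$ and $\q$, where I would argue by naturality from the slice spectral sequence for $kq$ (treated separately in the paper) together with multiplicativity and comparison with known topological extensions in the classical $K(1)$-local sphere.
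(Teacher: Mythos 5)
Your plan matches the paper's broad strategy: run the effective slice spectral sequence (ESSS) for $L$ over each base field, using the triangulated structure of the slice functors to transport the defining fiber sequence $L \to kq \to kq$ to slices, read off $d_1$ from motivic Steenrod operations via the Ananyevskiy--R{\"o}ndigs--{\O}stv{\ae}r formulas for $kq$, and force higher differentials by comparison. Two points are worth flagging where your outline elides steps that the paper must (and does) carry out in earnest.

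First, you say that the $E_1$-page becomes ``motivic cohomology of $\Spec F$ with computable coefficients,'' but the slices of $L$ involve $\Sigma^{2q-1,q}H\z/2^{a_q}$ where $a_q = \nu_2(3^q-1)$ varies unboundedly with $q$. Over prime fields and local fields the rings $\pi_{**}^F(H\z/2^n)$ had not been recorded in the literature, and they are not immediate from $\pi_{**}^F(H\z/2)$ and $\pi_{**}^F(H\z)$; the paper devotes an entire section (via the motivic Adams spectral sequence for $H\z/2^n$) to producing them. Without this input the $E_1$-term of the ESSS for $L$ cannot even be written down over $\f_q$, $\q_q$, or $\q$, so this is a genuine missing ingredient rather than a routine step. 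Second, for $F=\q$ the paper does not invoke an arithmetic fracture square in the ESSS. Instead it uses the Ormsby--{\O}stv{\ae}r description of $\pi_{**}^{\q}(H\z/2)$ and $\pi_{**}^{\q}(H\z)$ to split the $E_1$-term into summands that inject under the Hasse map $E_1^{\q} \to \prod_\nu E_1^{\q_\nu}$; one then checks that every differential over $\q$ is pulled back from a differential over some $\q_\nu$, and reads off the $E_\infty$-page and extensions by restriction. This is a direct generator-by-generator argument; it delivers the motivic Hasse principle for $L$ as a byproduct, whereas a fracture-square argument would require first establishing and then unwinding a limit diagram and need not by itself yield the explicit description that Theorem~A promises. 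Your appeal to ``topological realization'' is also slightly off over positive-characteristic prime fields; what the paper actually uses is base change to the algebraic closure, which reduces to the $\bar F$-case already treated.
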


\begin{rem2}
The groups $\pi_{**}^F(L)$ were already computed for $F = \c$ and $F=\r$ by Belmont, Isaksen, and the first author in \cite{BIK22}, so our new contribution is the computation over prime fields and the $q$-adic rationals. 
\end{rem2}

One simple consequence of our computations is that a famous pattern of $2$-torsion in the classical stable stems sits in $\pi_{**}^F(L)$ for all fields $F$. 
\begin{cor}\label{Cor:Bernoulli}
Let $F$ be any field of characteristic not two. Then $\pi_{4k-1,2k}^F(L)$ contains a summand of order at least the $2$-component of $\denom \left( \frac{B_{2k}}{4k} \right)$, where $B_{2k}$ is the $2k$-th Bernoulli number. 
\end{cor}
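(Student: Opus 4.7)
The strategy is to leverage the defining fiber sequence $L \to kq \xrightarrow{\psi^3-1} kq$, whose associated long exact sequence in bidegree $(*, 2k)$ reads
\[
\pi_{4k,2k}^F kq \xrightarrow{\psi^3-1} \pi_{4k,2k}^F kq \xrightarrow{\partial} \pi_{4k-1,2k}^F L \to \pi_{4k-1,2k}^F kq,
\]
so the cokernel of $\psi^3-1$ in bidegree $(4k,2k)$ injects into $\pi_{4k-1,2k}^F L$ via $\partial$. It therefore suffices to exhibit a cyclic subgroup of this cokernel whose $2$-component has the desired size.

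For this, I would identify a free $\mathbb{Z}$-summand of $\pi_{4k,2k}^F kq$ generated by the $k$-th power of the motivic Bott class of $kq$. Such a summand is present over any field $F$ of characteristic not two: its existence and integrality are visible in the slice filtration of $kq$ established in \cite{ARO17}, and for prime fields the class is pinned down explicitly by \Cref{MT:L}. The Adams operation $\psi^3$ of \cite{BH20} then acts on the Bott power by multiplication by $9^k$, as dictated by its behavior on the weight-$2$ Bott class, which in turn follows from compatibility with the classical $\psi^3$ under complex or étale realization. Consequently $(\psi^3-1)$ acts on this summand as multiplication by $9^k-1 = 3^{2k}-1$, and a rational integrality argument shows that the image of the Bott power in the full cokernel has order divisible by $9^k-1$.

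To conclude, I would invoke the classical $2$-adic identity
\[
v_2(3^{2k}-1) \;=\; v_2(k)+3 \;=\; v_2\!\left(\denom\!\left(\tfrac{B_{2k}}{4k}\right)\right),
\]
obtained by lifting the exponent on the left and by the von Staudt--Clausen theorem on the right. Because a finite abelian $2$-group containing a cyclic subgroup of order $2^m$ must have at least one cyclic factor of order at least $2^m$ in any direct-sum decomposition, the required summand of $\pi_{4k-1,2k}^F L$ follows.

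The main obstacle is the middle step: ensuring over an \emph{arbitrary} base field $F$ that the Bott power really does generate a free $\mathbb{Z}$-summand of $\pi_{4k,2k}^F kq$ on which $\psi^3$ acts as $9^k$ (up to lower-filtration torsion). For prime fields and algebraically closed fields this is directly part of \Cref{MT:L}; for a general $F$ it must be propagated from the prime subfield via naturality of the Bott class, of $\psi^3$, and of the fiber sequence defining $L$.
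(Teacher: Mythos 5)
Your approach is genuinely different from the paper's, and the gap you flag at the end is real and is the crux of the matter. The paper's proof is much shorter: it observes that any $F$ of characteristic $\neq 2$ sits in a tower $k \to F \to \bar F$ with $k$ its prime field and $\bar F$ its algebraic closure, reads off from the explicit calculations in \Cref{Sec:L} (\Cref{Table:CLGens}, \Cref{Table:FqLGens}) that $\pi_{4k-1,2k}^k L$ and $\pi_{4k-1,2k}^{\bar F}L$ each contain a $\z/2^{\nu(k)+3}$--summand on the class $\iota v_1^{2k}$, notes that the base-change map carries the former isomorphically onto the latter, and concludes that the image in $\pi_{4k-1,2k}^F L$ is a split $\z/2^{\nu(k)+3}$ by the two-out-of-three property of the composite. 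No fiber-sequence analysis over $F$ is required.

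Your route through the long exact sequence of $L\to kq \xrightarrow{\psi^3-1} kq$ has two issues beyond the one you acknowledge. First, to deduce that the image of the Bott power $\beta = v_1^{2k}$ in $\coker(\psi^3-1)$ has order divisible by $9^k-1$, it is not enough to know that $\beta$ generates a free summand and that $\psi^3(\beta) = 9^k\beta$ modulo other filtration: writing $\pi_{4k,2k}^F kq \cong \z_2\{\beta\}\oplus T$, if $T$ itself has free $\z_2$-summands (which over $F=\q$, say, it does), the value of $\langle(\psi^3-1)(t),\beta^\vee\rangle$ for $t\in T$ is not obviously zero, so the ``rational integrality argument'' does not immediately give what you want; you need a further structural input pinning down $\psi^3-1$ off-diagonal. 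Second, passing from ``cyclic subgroup of order $2^{\nu(k)+3}$'' to ``cyclic summand of order at least $2^{\nu(k)+3}$'' via elementary divisors requires $\pi_{4k-1,2k}^F L$ to be a direct sum of cyclics, which is not established for arbitrary $F$ (the paper never asserts it; it instead exhibits the splitting directly). Finally, as you note yourself, over a general $F$ the slice spectral sequence for $kq$ is not known to converge (the paper's \Cref{thm:convergence} is restricted to the five listed fields), so the existence of the Bott summand cannot be read off from the ESSS alone; and the fix you propose, propagation from the prime subfield by naturality, ultimately requires the same $k\to F\to\bar F$ sandwich the paper uses, at which point the detour through $kq$ becomes superfluous.
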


\begin{proof}
Any field $F$ sits in a sequence of field extensions
$$k \to F \to \bar{F}$$
between a prime field $k$ and its algebraic closure $\bar{F}$. Inspecting the results of \Cref{Sec:L}, we see that the corollary holds for $k$ and $\bar{F}$, and moreover, the relevant summands in $k$ base change to the corresponding summands in $\bar{F}$. It follows by naturality that they must be nonzero in $F$. 
\end{proof}

\begin{rem2}\label{Rem:imJ}
More generally, the groups $\pi_*(\tau_{\geq 0} L_{K(1)}\s)$ appear as subgroups of $\pi_{**}^F(L)$ in certain bidegrees 
for all primes fields and algebraically closed fields. Therefore the classical $v_1$-periodic stable stems sit interestingly inside $\pi_{**}^F(L)$ for all fields of characteristic not two. In particular, we find the $v_1$-periodic elements constructed in \cite{Qui21c, Qui21b}. 
\end{rem2}

\begin{rem2}\label{Rmk:Ltov1}
In classical stable homotopy theory, the groups $\pi_*(\tau_{\geq 0} L_{K(1)}\s)$ encodes the image of the $J$-homomorphism and the Hurewicz image of connective real K-theory, i.e., all of the $v_1$-periodic elements in the classical stable stems. Over algebraically closed fields of characteristic zero, the groups $\pi_{**}^{\bar{F}}(L)$ computed in \cite{BIK22} capture the $v_1$-periodic elements in the $\bar{F}$-motivic stable stems (which were computed in \cite{CQ21}). The extension of this observation to more general base fields is the subject of ongoing investigation and is closely related to the motivic analogue of the Telescope Conjecture.
\end{rem2}

\begin{rem2}
Classically, the map $\psi^3-1: ko \to ko$ is closely related to $d_1$-differentials in the $ko$-based Adams spectral sequence \cite{Mah81, DM89}. The $kq$-based motivic Adams spectral sequence was studied in \cite{CQ21}, where it was shown that the $n$-th Milnor--Witt stem is detected in filtration at most $n$. We hope to apply our analysis of $\psi^3-1: kq \to kq$ in future work to study low-dimensional Milnor--Witt stems via the $kq$-based motivic Adams spectral sequence, potentially extending the results of Morel \cite{Mor12} and R{\"o}ndigs--Spitzweck--{\O}stv{\ae}r \cite{RSO19, RSO21} mentioned above to higher Milnor--Witt stems. 
\end{rem2}

\Cref{Cor:Bernoulli} and \Cref{Rem:imJ} highlight some features of $v_1$-periodic motivic stable homotopy theory which are independent of the base field. We include the following remark to illustrate an aspect of the computation which does depend on the base field.

\begin{rem2}\label{Rmk:FqDifferentials}
As we will explain below, we compute $\pi_{**}^{F}(L)$ using the effective slice spectral sequence. When $F = \c$, there is an important family of $d_1$-differentials
$$d_1(\iota v_1^2 \tau^n) = \iota \tau^{n+1} h_1^3$$
which occur for all $n \geq 0$. However, when $F = \f_q$, the similar differential $d_1(\iota x_q v_1^2 \tau^n)$ occurs if and only if 
$$
\begin{cases}
\nu(q-1)+\nu(n+1) > 3 \quad & \text{ if } q \equiv 1 \mod 4, \\
\text{unconditionally} \quad & \text{ if } q \equiv 3 \mod 4 \text{ and } n \equiv 0 \mod 2, \\
\nu(q^2-1)+\nu(n+1) > 4 \quad & \text{ if } q \equiv 3 \mod 4 \text{ and } n \equiv 1 \mod 2,
\end{cases}
$$
where $\nu(-)$ denotes dyadic valuation. Further discussion appears in \Cref{SS:FqL}. 
\end{rem2}

We compute $\pi_{**}^F(L)$ over prime fields using the effective slice spectral sequence (ESSS), a powerful tool which has been used to great effect in recent years (e.g., \cite{KRO20, RO16, RSO19, RSO21}). The $E_1$-term of the ESSS for $L$ is comprised of shifted copies of $\pi_{**}^F(H\z/2^n)$ for varying $n$.
The coefficient rings of $H\z/2$ and $H\z$ have been recorded over prime fields \cite{Hil11, Orm11, OO13, Kyl15} (see \Cref{Sec:Background} for a summary), but as far as we are aware, the coefficient rings of $H\z/2^n$ have not appeared in previous literature. 

\begin{thmx}\label{MT:Z2n}
The groups $\pi_{**}^F(H\z/2^n)$ are described for all $n \geq 1$ and all $F \in \{ \bar{F}, \f_q, \q_q, \r, \q \}$ in \Cref{Sec:Z2n}. 
\end{thmx}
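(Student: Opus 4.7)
The plan is to reduce the computation of $\pi_{**}^F(H\z/2^n)$ to the already-known coefficients $\pi_{**}^F(H\z)$ by exploiting the cofiber sequence
\[ H\z \xrightarrow{\cdot 2^n} H\z \to H\z/2^n \to \Sigma^{1,0} H\z. \]
In each bidegree this yields a natural short exact sequence
\[ 0 \to \pi_{p,q}^F(H\z)/2^n \to \pi_{p,q}^F(H\z/2^n) \to \pi_{p-1,q}^F(H\z)[2^n] \to 0, \]
where $A[2^n]$ denotes the $2^n$-torsion subgroup. The outer terms can be read off directly from the tabulated coefficient rings of $H\z$ over the relevant fields $\bar F$, $\f_q$, $\q_q$, $\r$, and $\q$ (work of Hill, Kylling, Ormsby, and Ormsby--\O stv\ae r summarized in \Cref{Sec:Background}), so the combinatorial part of the computation is bookkeeping. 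Over $\bar F$ and $\f_q$, one of the two outer terms already vanishes in most bidegrees, so $\pi_{**}^F(H\z/2^n)$ is immediately determined up to isomorphism.

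The substantive issue is the extension problem in bidegrees where both outer terms are nonzero. My plan is to resolve it by induction on $n$, using the cofiber sequence
\[ H\z/2^{n-1} \to H\z/2^n \to H\z/2 \]
coming from $0 \to \z/2 \to \z/2^n \to \z/2^{n-1} \to 0$; each inductive step can be checked against the known structure of $\pi_{**}^F(H\z/2)$ via the mod-$2$ reduction map, and in particular against the $\pi_{**}^F(H\z/2)$-module structure transported along the reduction. The extensions are further constrained by multiplicative structure: $H\z/2^n$ is a ring spectrum and the unit map $H\z \to H\z/2^n$ is multiplicative, so products of lifts of classes in $\pi_{**}^F(H\z)/2^n$ (in particular with powers of $\tau$ and $\rho$, or with norm residue symbols) pin down most lifts up to Bockstein error terms.

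The hardest case is $F = \q$, whose $\pi_{**}^F(H\z)$ combines Milnor $K$-theory with global Galois-cohomological input arising from ramification at every prime. Here I would use naturality along the base-change maps $\q \to \q_p$ and $\q \to \r$ to reduce each potential extension to a completion where the corresponding assertion has already been proved, and appeal to reciprocity to assemble the local data coherently. For $F = \r$ and $F = \q_q$ the extensions should be determined directly by the inductive and multiplicative arguments above; for $\f_q$ only a careful analysis of the $\tau$-action is needed, since the relevant torsion is bounded and controlled by small Milnor $K$-groups. In each case the final description is recorded as a $\pi_{**}^F(H\z)$-module presentation together with the mod-$2$ reduction, which is exactly the input required to run the slice spectral sequence for $L$ in \Cref{Sec:L}.
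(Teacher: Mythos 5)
Your approach is genuinely different from the paper's. You propose to run the long exact sequence from the cofiber sequence $H\z \xrightarrow{\cdot 2^n} H\z \to H\z/2^n$, producing in each bidegree a short exact sequence with outer terms $\pi_{p,q}^F(H\z)/2^n$ and $\pi_{p-1,q}^F(H\z)[2^n]$, and then resolve extensions inductively using $H\z/2^{n-1}\to H\z/2^n\to H\z/2$, multiplicativity, and (over $\q$) naturality along base change. The paper instead runs the truncated motivic Adams spectral sequence for $H\z/2^n$, whose $E_1$-page is $\pi_{**}^F(H\z/2)[h_0]/h_0^n$; the differentials are literally truncations of the differentials in the already-determined motivic Adams spectral sequence for $H\z$ (Prop.~\ref{prop:HZ}), and the $h_0$-filtration on the $E_\infty$-page records exactly the $2$-divisibility of the abutment, so no separate extension analysis is needed. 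The paper does adopt your LES approach as a shortcut, but only over $\bar F$ and $\r$, precisely the two fields where $\pi_{**}^F(H\z)$ is concentrated so that no extension problems can arise.

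The remaining issue with your argument is that the extension resolution is the crux, and your sketch is thinnest exactly where the paper's method does the most work for you. Over $\f_q$ the short exact sequence collapses because $\pi_{s,w}^{\f_q}(H\z)$ lives only in stems $0$ and $-1$ and never overlaps in a bidegree; but over $\q_q$ ($q$ odd or $q=2$) and $\q$, the homotopy of $H\z$ occupies three adjacent stems, and both outer terms of your short exact sequence are nonzero simultaneously in infinitely many bidegrees. It is not automatic that these extensions split, and the split actually fails to be apparent from your inductive cofiber sequence alone: the map $H\z/2^n\to H\z/2$ only detects the top layer of each $h_0$-tower. To make your route airtight you would need to track the full $\z/2^k$-summands as you increase $n$ (for instance by observing that a lift of a generator of $\pi_{p,q}^F(H\z)/2^n$ can be chosen to have the same additive order as a compatible lift for smaller $n$, or by explicitly multiplying by a fixed $\tau$-power and comparing orders). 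The paper's $h_0$-Bockstein argument avoids all of this because the filtration is by $2$-divisibility from the start; if you want to complete your alternative proof, the concrete gap to fill is a proof that each of those potential extensions over $\q_q$, $\q_2$, and $\q$ is split, which is true but requires more than the bare naturality and reduction-mod-$2$ you invoke.
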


Even with these groups in hand, describing the $E_1$-term of the ESSS for $L$ in a manner suitable for computations is a difficult problem. One novel aspect of our work is the concise graphical calculus developed in \Cref{Sec:kq} and \Cref{Sec:L}. Previous computations over $\f_q$ (e.g., \cite{Kyl15, WO17}) or $\q_q$ (e.g., \cite{OO13}) often use different charts to display information for different congruence classes of $q$, while computations over $\r$ (e.g., \cite{BI22, DI16a, BIK22}) usually use different charts for different Milnor--Witt stems. In our work, however, we are able to display complicated data, like the $E_1$-page of the ESSS for $L$ over $\f_q$, $q$ odd, in a single chart (\Cref{Fig:FqE1L}).\footnote{Roughly speaking, the tradeoff is that instead of using multiple charts containing a small variety of symbols, we use a single chart containing a large variety of symbols.}

Once we have described the $E_1$-term of the ESSS for $L$, we are tasked with computing differentials. In all of the cases we consider, the differentials are forced by comparison with the ESSS for $L$ over algebraically closed fields, the ESSS for $kq$ over the given prime field, or in the case $F=\q$, comparison with the ESSS for $L$ over $\r$ and $\q_q$. Using the defining fiber sequence defining $L$, we are also able to deduce formulas for the $d_1$ differentials over arbitrary base fields in terms of motivic Steenrod operations (see Thm. \ref{thm:sliceL}). 

\begin{thmx}\label{MT:kq}
The effective slice spectral sequence for $kq$ is explicitly described in \Cref{Sec:kq} for the following base fields:
\begin{enumerate}
\item For $F = \bar{F}$ algebraically closed in \Cref{SS:Ckq}. 
\item For $F = \f_q$ a finite field in \Cref{SS:Fqkq}.
\item For $F = \q_q$ in \Cref{SS:Qqkq} ($q$ odd) and \Cref{SS:Q2kq} ($q=2$). 
\item For $F = \r$ in \Cref{SS:Rkq}. 
\item For $F = \q$ in \Cref{SS:Qkq}. 
\end{enumerate}
\end{thmx}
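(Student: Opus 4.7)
The plan is to compute the effective slice spectral sequence (ESSS) for $kq$ over each base field $F$ in the statement by combining known descriptions of the slices of $kq$ with the motivic cohomology computations of \Cref{MT:Z2n} and a general formula for the $d_1$-differentials in terms of motivic Steenrod operations.

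First I would identify the $E_1$-page. The slices of $kq$ are known to be suspensions of $H\underline{\z}$ (in even slice degree) and $H\underline{\z}/2$ (in odd slice degree) by the work of R{\"o}ndigs-Spitzweck-{\O}stv{\ae}r. Combined with \Cref{MT:Z2n} (restricted to $n=1$) and the existing descriptions of $\pi_{**}^F(H\underline{\z})$ recalled in \Cref{Sec:Background}, this yields an explicit description of the $E_1$-page as a module over the motivic cohomology of $F$ for each of the fields in the theorem. These data would then be organized in the chart format developed in \Cref{Sec:kq}.

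Next I would determine the $d_1$-differentials via the description of $d_1$ in the $kq$-ESSS in terms of the motivic Steenrod operations $Sq^2$ and $Sq^3$ with suitable $\tau$- and $\rho$-twists. Over algebraically closed fields this is the formula of Ananyevskiy, R{\"o}ndigs, and {\O}stv{\ae}r, and over general base fields it is the generalization established earlier in the paper (as advertised in the abstract). The action of the motivic Steenrod operations on the classes appearing in the $E_1$-page can then be read off from the known action on motivic cohomology classes, producing all $d_1$-differentials. Higher differentials would be propagated by combining multiplicativity, the $\eta$-action, and naturality under the base change maps $F\to \bar F$, which reduces many candidate differentials to the algebraically closed case. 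For $F=\q$ I would use simultaneous comparison with the ESSS over $\r$ and over $\q_p$ for primes $p$, of the same flavor as the strategy described for $L$ in \Cref{MT:L}.

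The main obstacle will be managing the arithmetic dependence of the differentials on congruence data of the base field (for instance, the dyadic valuations of $q\pm 1$ for $F=\f_q$ or $F=\q_q$), as already hinted at in \Cref{Rmk:FqDifferentials}. The $\q_2$ case is expected to be the most technically delicate because the motivic cohomology of $\q_2$ has the most intricate structure among the fields considered, so several potential differentials and multiplicative extensions must be resolved by careful case analysis. Organizing the resulting data legibly via the graphical calculus of \Cref{Sec:kq}, rather than through a proliferation of case-dependent charts, is itself a nontrivial part of the work.
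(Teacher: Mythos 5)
Your overall strategy---identify the $E_1$-page from the known slices and the motivic cohomology of $F$, determine $d_1$ from Steenrod-operation formulas, and handle the remaining differentials by comparison with $\bar F$ (and with $\r$, $\q_p$ when $F=\q$)---is essentially the approach of Section~4, and the warning that $\q_2$ is the most delicate case is apt. However there are several inaccuracies worth flagging.

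First, your description of the slices is incorrect: it is not the case that even slices of $kq$ are built from $H\z$ and odd slices from $H\z/2$. By \Cref{thm:kqslice}, the even slice $s_{2q}kq$ is a wedge of $q$ copies of suspended $H\z/2$ together with one copy of suspended $H\z$, while $s_{2q+1}kq$ is a wedge of $q+1$ copies of suspended $H\z/2$. The paper also does not invoke \Cref{MT:Z2n} here at all; the $H\z/2^n$ computations for $n>1$ are only needed for $L$, where $\psi^3-1$ produces nontrivial $\z/2^{a_q}$ summands in the slices. For $kq$ one only needs $\pi_{**}^F(H\z/2)$ and $\pi_{**}^F(H\z)$, which are recalled in \Cref{SS:Coeffs}.

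Second, you over-engineer the differentials. In the paper the $d_1$-differentials over $\f_q$, $\q_q$ ($q$ odd), and $\q$ are not read off by separately evaluating Steenrod operations on motivic cohomology classes; they are \emph{forced} by base change from the single differential $d_1(v_1^2)=\tau h_1^3$ over $\bar F$ together with $x_q$- (respectively $\pi^\delta x_q^\epsilon \tau^n$-) linearity. The Steenrod-operation formulas of Ananyevskiy--R\"ondigs--{\O}stv{\ae}r (and, for $\q_2$, of R\"ondigs--{\O}stv{\ae}r) are recalled as the structural backbone, but the actual computation over prime fields is a comparison argument. Your plan to ``propagate higher differentials using multiplicativity, the $\eta$-action, and naturality'' is more machinery than is needed: for $\bar F$, $\f_q$, $\q_q$, and $\q_2$ the spectral sequence collapses at $E_2$ for degree reasons, and over $\r$ there are exactly two generating differentials ($d_1(v_1^2)=\tau h_1^3$ and $d_1(\tau^2)=\rho^2\tau h_1$) established in~\cite{BIK22}; over $\q$ the collapse follows from the Hasse-map argument.

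Finally, your outline omits the hidden extensions, which are a genuine part of ``explicitly describing'' the spectral sequence in \Cref{MT:kq}. Over $\f_q$ these are resolved by comparison with Kylling's $kq$-based motivic Adams spectral sequence computation, and over $\q_q$ and $\q$ by further comparison with the local places; a proof that ignores this step is incomplete.
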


\begin{rem2}
The effective slice spectral sequences for $kq$ over $F = \bar{F}$ and $F = \r$ were already described in \cite{BIK22}. In fact, \cite{ARO17}, Ananyevskiy, R{\"o}ndigs and {\O}stv{\ae}r expressed the differentials in the ESSS for $kq$ in terms of Steenrod operations over any field of characteristic not two. \Cref{MT:kq} provides the more explicit analysis in terms of generators and relations needed to analyze the ESSS for $L$. 
\end{rem2}

While describing \emph{how} the differentials in the ESSS for $L$ are produced is easy, actually determining \emph{which} differentials occur and describing the pattern intelligibly is difficult. As we alluded to in \Cref{Rmk:FqDifferentials}, the characteristic of the base field determines whether or not certain $d_1$-differentials occur. Our approach is to find arithmetic conditions under which differentials occur, and then describe the groups in the next page of the ESSS in terms of these conditions. We refer the reader to \Cref{SS:FqL} for examples and further discussion. 

\begin{rem2}
One consequence of our analysis is that the Hasse map
$$\pi_{**}^{\q}(E) \to \prod_{\nu} \pi_{**}^{\q_\nu}(E),$$
where $\nu$ ranges over all places, is injective for $E = kq$ and $E = L$. In the terminology of Ormsby--{\O}stv{\ae}r \cite{OO13}, $kq$ and $L$ satisfy the motivic Hasse principle. In \cite{BCQ21}, Balderrama, Culver, and the second author showed that the $E_2$-term of the motivic Adams spectral sequence converging to the motivic stable stems satisfies the motivic Hasse principle. Since $kq$ and $L$ serve as first approximations to the motivic sphere spectrum, it is interesting to wonder if the motivic sphere spectrum itself might satisfy the motivic Hasse principle. 
\end{rem2}

\begin{rem2}
In \cite[Rmk. 5.14]{OO13}, Ormsby--{\O}stv{\ae}r explain how their computation of $\pi_{**}^{\q}BPGL\langle 1 \rangle$ and \cite[Lem. 2.9]{OO13} can be used to recover the Rognes--Weibel computation of the $2$-complete algebraic K-theory of $\q$ from \cite{RW00}. Similarly, our computations can be used to recover the $2$-complete Hermitian K-theory of $\bar{F}$, $\f_q$, $\q_q$, $\r$, and $\q$ using \cite[Lem. 1.5.12]{Kyl15} which relates $\pi_{**}kq$ to $\pi_{**}KQ$. For example, using \Cref{MT:kq}, we can recover some of the Hermitian K-theory computations of Friedlander \cite{Fri76} and Berrick--Karoubi \cite{BK05}. 
\end{rem2}

\begin{rem2}
Since $L$ is the fiber of the map $\psi^3-1: kq \to kq$, it is natural to ask if computing $\pi_{**}^FL$ via the resulting resulting long exact sequence in homotopy groups would be simpler than using the ESSS. We mention three reasons the ESSS is preferable:

\begin{enumerate}

\item To analyze the coefficients of $L$ over rings of integers in number fields (cf. \Cref{Rmk:OF}), it is no longer possible to work with generators and relations, so computations via long exact sequence become intractable. Our computations over $\q$ using the ESSS in this paper could serve as a guide for these more elaborate computations. 

\item Although the coefficients of $kq$ have been explicitly described over $\c$ \cite{IS11} and $\r$ \cite{GHIR19}, and the ESSS for $kq$ has been studied over general base fields \cite{ARO17, KRO20, RO16}, the coefficients of $kq$ had not been computed explicitly enough over prime fields to make $\pi_{**}^FL$ easily accessible via long exact sequence. Since we already need to use the ESSS to compute $\pi_{**}^Fkq$, the ESSS is a natural tool for computing $\pi_{**}^FL$. 

\item As explained on \cite[Pg. 2]{BIK22}, the effective slice filtration is part of the ``higher structure" of $\pi_{**}^FL$ which may be helpful for future analysis. 

\end{enumerate}

\end{rem2}

\begin{rem2}\label{Rmk:OF}
In a previous version of this paper, we outlined an approach to analyzing the coefficients of $L$ over rings of $S$-integers in number fields, where $S$ contains the archimedean and dyadic places. We were able to give a complete description of the $d_1$-differentials in the slice spectral sequence in terms of Steenrod operations (analogous to computations in \cite{RSO19, RSO21, KRO20}), but ran into two problems. First, the description of the $E_2$-term in terms of subquotients of mod two motivic cohomology was quite complicated. Second, even with the $E_2$-term computed, there was still room for longer differentials, the sources and targets of which were subquotients of mod two motivic cohomology groups, but not mod two motivic cohomology groups themselves. In particular, the longer differentials cannot be expressed in terms of Steenrod operations. It could still be possible to describe these differentials by relating them to analogous differentials over $\r$ (cf. \cite{KRO20}), but we did not end up pursuing this. 
\end{rem2}

\subsection{Outline}

In \Cref{Sec:Background}, we recall the spectrum $L$ and its essential properties. We also recall the coefficients of $H\z/2$ and $H\z$ over our fields of interest. 

In \Cref{Sec:Z2n}, we compute the coefficient rings of $H\z/2^n$ for all $n \geq 1$ over the relevant fields. Our approach using the Adams spectral sequence may be of independent interest. 

In \Cref{Sec:kq}, we recall the ESSS for $kq$ over algebraically closed fields and the reals from \cite{BIK22}, and explicitly describe the ESSS for $kq$ over prime fields and the $q$-adic rationals. The results in this section are used in \Cref{Sec:L} to produce some differentials in the next section. 

In \Cref{Sec:L}, we recall the ESSS for $L$ over algebraically closed fields and the reals from \cite{BIK22}, and analyze the ESSS for $L$ over prime fields and the $q$-adic rationals. 

\Cref{Sec:Figures} includes charts for many of the spectral sequences we consider and \Cref{Sec:Tables} includes tables describing the coefficients of $L$ over certain fields. 

\subsection{Conventions}\label{SS:Conventions}

\begin{enumerate}

\item We implicitly work in the $2$-complete setting. 

\item We work over fields of characteristic not two. In particular, $\f_q$ always refers to a finite field of odd characteristic. 

\item Motivic stable homotopy groups are bigraded in the form $(s,w)$, where $s$ denotes the stem and $w$ denotes the motivic weight. 

\item We use the abbreviation `ESSS' for the effective slice spectral sequence. 

\item We write $s_*(X)$ for the slices of a motivic spectrum $X$. 

\item The horizontal axis in ESSS charts is always the stem $s$ and the vertical axis is always the ``Adams--Novikov filtration" $f$, which is twice the effective slice filtration minus the stem. 

\item $E_r^F(X)$ denotes the $E_r$-page of the ESSS for an $F$-motivic spectrum $X$. If the field is understood, we will sometimes suppress it from the notation. 

\item We take the elements $\mathsf h$ and $\rho$ to be as defined in the conventions of \cite{BIK22}. 
\end{enumerate}

\subsection{Acknowledgments}

The authors thank Oliver R{\"o}ndigs for discussions related to \cite{KRO20} and a reminder about the convergence of the ESSS, as well as William Balderrama and Kyle Ormsby for discussions relating this work to the $KGL/2$-local sphere. The first author was supported by National Science Foundation grant DMS-1926686. The second author is grateful to the Max Planck Institute for Mathematics in Bonn for its hospitality and financial support, and was partially supported by NSF grants DMS-2039316, DMS-2314082, and an AMS-Simons Travel Grant.

\section{Background}\label{Sec:Background}

In this section, we provide the background necessary for our computations. The motivic spectrum $L$ and its essential properties are recalled in \Cref{SS:LSlices}, and the coefficients of the Eilenberg--MacLane spectra $H\z/2$ and $H\z$ over our fields of interest are recalled in \Cref{SS:Coeffs}.

\subsection{The motivic spectrum $L$ and its slices}\label{SS:LSlices}

In \cite{BH20}, Bachmann and Hopkins constructed a unital ring map
$$\psi^3: kq\left[\frac{1}{3}\right] \to kq\left[\frac{1}{3}\right]$$
whose Betti realization is the classical Adams operation $\psi^3$. We define
$$L:= \fib\left(\psi^3-1: kq\left[\frac{1}{3}\right] \to kq\left[\frac{1}{3}\right]\right).$$

We will use the effective slice spectral sequence (ESSS) \cite{Lev08, RSO19}
$$E_1^{s,f,w}(X) = \pi_{s,w}(s_{\frac{s+f}{2}}(X)) \Rightarrow \pi_{s,w}(X)$$
to study $\pi_{**}^FL$ for various base fields $F$. The differentials in the ESSS have the form 
$$d_r: E_r^{s,f,w} \to E_r^{s-1,f+2r-1,w},$$
cf. \cite[Thm. 2.7]{BIK22}. 

The $E_1$-page of the ESSS for $L$ can be computed using the fiber sequence of slices
$$s_*L \to s_*kq \xrightarrow{\psi^3-1} s_* kq,$$
which implies
$$\pi_{**}s_*L \cong K \oplus \Sigma^{-1} C$$
where
$$K := \ker(\pi_{**}(\psi^3-1)) \quad \text{ and } \quad C := \coker(\pi_{**}(\psi^3-1)).$$
As in \cite{BIK22}, we will write $\iota x$ for the image of $x \in \Sigma^{-1}C$ under the inclusion into $\pi_{**}s_*L$. 

We use the grading convention $(s,f_{AN},w)$ as in \cite{BIK22}. Here $s$ is the stem, $w$ is the weight, and $f_{AN}$ is the ``Adams--Novikov filtration'', which equals twice the stem minus the slice filtration $2f-s$.

By \cite[Thm. 17]{ARO17}, the slices of $kq$ can be expressed by the formula
\begin{equation}
\label{eq:slicekq}
	s_*kq = H\z[h_1,v_1^2]/(2h_1),
\end{equation}
where $|v_1^2| = (4,0,2)$ and $|h_1| = (1,1,1)$. 
We explain the meaning of this formula. In $H\z[h_1,v_1^2]/(2h_1)$, a monomial of tri-degree $(s,f_{AN},w)$ contributes a summand of the motivic Eilenberg--MacLane spectrum $\Sigma^{s,w}HA$ to the $w$th slice $s_wkq$.

\begin{rem2}
\label{rem:jandL}
	In \cite{BH20}, Bachmann and Hopkins showed that the map $\psi^3-1$ factors  
	\begin{equation}
	\label{eq:kqtoksp}
kq\xrightarrow{\psi^3-1} \Sigma^{4,2} ksp	
\end{equation}
	where the target is the very effective cover of $\Sigma^{4,2}kq$. 
	After localizing at $2$, the fiber of \eqref{eq:kqtoksp} defines a motivic spectrum $j$ that is analogous to the classical image-of-$J$ spectrum.
	
	It can be shown that the slices of $j$ can be expressed by
	$$s_*j = H\z \otimes \{\alpha\text{-family classes}\}$$
	where ``$\alpha\text{-family}$ classes'' refers to the 
classical Adams--Novikov classes (at the prime $2$) in filtration $1$ and their $\alpha_1$-power multiples.

The slices of $L$ and $j$ differs by a tower of suspensions of $H\f_2$, and in slice filtrations $-1$ and $2$. 
More precisely, the very effective cover functor gives $\Sigma^{4,2}ksp\to kq$, which induces a map $j\to L.$ The difference between $s_*L$ and $s_*j$ is given by 
$$s_*\cofib(j\to L)= H\z\otimes \Big(\z[h_1]/(2h_1)\{\iota\} \oplus \f_2\{\bar\alpha_{2} \} \Big)$$
where $|\iota|=(-1,1,0)$ and $|\bar \alpha_{2}|=(3,1,2).$

One can compute the coefficients of $j$ over prime fields using the techniques in this manuscript, but the computation is slightly messier in low degrees.
\end{rem2}

\begin{thm}
\label{thm:convergence}
Over the base fields $F = \bar{F}, \f_q, \q_q, \r, \q$,
the slice spectral sequence for $L$ and the slice spectral sequence for $kq$ converge strongly to $\pi_{**}^F(L_2^\wedge)$ and $\pi_{**}^F(kq_2^\wedge)$.
\end{thm}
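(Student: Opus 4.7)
The plan is to first establish strong convergence for the ESSS of $kq$, and then deduce the corresponding statement for $L$ from the defining fiber sequence $L \to kq \xrightarrow{\psi^3-1} kq$. For $kq$, the explicit slice formula \eqref{eq:slicekq} shows that the $w$-th slice $s_w kq$ is a finite direct sum of bidegree shifts of $H\z$, together with shifts of $H\f_2$ arising from the $2h_1=0$ relation. In any fixed bidegree $(s,w)$, only finitely many slices contribute to $E_1^{s,f,w}$: the weight of a contributing monomial $h_1^a v_1^{2b}$ is determined by $w$ and the total internal degree is controlled by $s$. Combined with the fact that the coefficient rings $\pi_{**}H\z/2^n$ over the listed base fields (recalled in \Cref{SS:Coeffs} and computed in \Cref{Sec:Z2n}) are $2$-complete in each bidegree, this gives a vanishing region on the $E_1$-page sufficient for strong convergence to $\pi_{**}^F(kq_2^\wedge)$, in line with the convergence arguments used by R\"ondigs--Spitzweck--{\O}stv{\ae}r.

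For $L$, I would use that the effective cover functors $f_n$ are triangulated, so the defining fiber sequence yields a fiber sequence of slice towers and hence a long exact sequence relating the ESSS for $L$ to that for $kq$. The $E_1$-page of the ESSS for $L$ is described by $K \oplus \Sigma^{-1} C$, where $K$ and $C$ are the kernel and cokernel of $\pi_{**}(\psi^3-1)$ on slices, as noted in \Cref{SS:LSlices}. The same finiteness and bidegreewise boundedness properties that ensure convergence for $kq$ transfer directly to $L$ through this long exact sequence, so strong convergence of the ESSS for $L$ follows from that for $kq$ via a standard map-of-spectral-sequences argument.

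The main obstacle is the case of fields with infinite mod-$2$ cohomological dimension, namely $\r$ and $\q$. Over these fields, the coefficient rings of $H\z$ contain $\rho$-divisible and $\mathsf h$-divisible torsion towers, so a priori the relevant $\lim^1$ term in the convergence criterion could be nonzero. The key point is that after $2$-completion these towers are captured correctly: each bidegree of the abutment becomes pro-finite and the relevant $\lim^1$ vanishes. One can verify this by combining the explicit coefficient ring descriptions in \Cref{SS:Coeffs} with a direct check that the slice filtration on $\pi_{s,w}^F(kq_2^\wedge)$ is complete Hausdorff, or by invoking a Galois descent argument from the algebraically closed case (where convergence is immediate) to $\r$ and $\q$.
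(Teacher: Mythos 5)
Your proposal takes a different route from the paper's proof, and unfortunately it has a genuine gap. The paper's argument has two distinct steps that your proposal conflates: (a) \emph{identifying the target} of the spectral sequence, i.e.\ showing that the homotopy inverse limit of the $2$-completed slice tower is the $(2,\eta)$-completion (cited from \cite{BIK22}), and then that $(2,\eta)$-completion agrees with $2$-completion over the base fields in question (via \cite{HKO11a} in characteristic zero and \cite[Prop.~5.10]{WO17} in positive characteristic); and (b) \emph{establishing strong convergence} once the target is known, via \cite[Thm.~7.1]{Boa98} together with the explicit computations in \S\S4--5. Your proposal never addresses step (a). Bidegreewise finiteness of the $E_1$-page does not identify what the slice tower converges to; in particular it does not explain why the answer involves the $2$-completion rather than the $(2,\eta)$-completion. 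This is precisely the subtlety that forces the paper to invoke \cite{HKO11a} and \cite{WO17}.

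Moreover, your finiteness claim itself fails over $\r$ and $\q$, exactly the cases you flag as problematic. Over $\r$, $\pi_{**}(H\z/2) \cong \z/2[\tau,\rho]$ contains classes in arbitrarily negative stem, so for a fixed bidegree $(s,w)$ infinitely many slices can contribute to $E_1^{s,f,w}$ as $f \to \infty$. Your argument via monomials $h_1^a v_1^{2b}$ only accounts for the degrees of the Eilenberg--MacLane summands, not for the internal degrees of their homotopy. You acknowledge the gap but the two repairs you suggest do not close it: a direct verification that the slice filtration on $\pi_{s,w}^F(kq_2^\wedge)$ is complete Hausdorff would essentially amount to redoing the Boardman criterion (i.e.\ exactly what the paper does, but you have not actually carried it out), and a Galois descent from $\bar{F}$ does not apply to $\q$, which is not a finite Galois subextension of any algebraically closed field in a way that would let convergence descend. (For $\r$, the paper simply cites \cite{BIK22}, where that case is handled directly.) Finally, the deduction of the $L$ case from the $kq$ case is plausible in outline, but it also implicitly requires the identification of targets from step (a); a ``standard map-of-spectral-sequences argument'' for strong convergence through a fiber sequence still needs to know what the spectral sequences abut to.
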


\begin{proof}
	The case for $\r$ is proved in \cite{BIK22}. 
	By \cite{BIK22}, the limit of the $2$-completed slice tower of $L$ is $(2,\eta)$-completed $L$. Over the specified base fields, the $(2,\eta)$-completions of $L$ and $kq$ coincide with the $2$-completions of $L$ and $kq$ by \cite{HKO11a} (in characteristic zero) and \cite[Prop. 5.10]{WO17} (in positive characteristic). The result then follows 
	from \cite[Thm.~7.1]{Boa98} and the calculations in \S 4 for $kq$ and \S 5 for $L$.
\end{proof}

\subsection{Coefficient rings of $H\mathbb{Z}/2$ and $H\mathbb{Z}$}\label{SS:Coeffs}

For future reference, we record the coefficient rings of $H\z/2$ and $H\z$ over various base fields in this section. We refer the reader to \cite[Sec. 2.1]{IO18} for a summary of how $\pi_{**}^{F}(H\z/2)$ can be computed using Milnor K-theory \cite{Mil69} and the Bloch--Kato Conjecture \cite{Voe03, Voe11}. All of the computations of $\pi_{**}^{F}(H\z/2)$ appear in \cite{IO18}, except the case $F = \q$ which we pull from \cite{OO13}. Once $\pi_{**}^F(H\z/2)$ is known, the groups $\pi_{**}^F(H\z)$ can be computed using the $\rho$-Bockstein spectral sequence \cite{Hil11} and motivic Adams spectral sequence \cite{DI10}, or by consulting previous motivic cohomology computations. 

\subsubsection{Algebraically closed fields}

These results follow, for instance, from \cite{DI10}. Let $F = \bar{F}$ be an algebraically closed field. Then we have
$$\pi_{**}^{F}(H\z/2) \cong \z/2[\tau], \quad \pi_{**}^{F}(H\z) \cong \z_2[\tau],$$
where $|\tau| = (0,-1)$.

\subsubsection{Finite fields}

These results appear, for instance, in \cite{Kyl15}. We have
$$\pi_{**}^{\f_q}(H\z/2) \cong 
\begin{cases} 
\z/2[\tau,u]/(u^2) \quad & \text{ if } q \equiv 1 \mod 4, \\ 
\z/2[\tau,\rho]/(\rho^2) \quad & \text{ if } q \equiv 3 \mod 4, 
\end{cases}
$$
and
$$\pi_{**}^{\f_q}(H\z) \cong 
\begin{cases} 
\z\{1\} \oplus \bigoplus_{i \geq 0} \z/2^{\nu(q-1)+\nu(i+1)} \{u\tau^i\} \quad & \text{ if } q \equiv 1 \mod 4, \\
\z\{1\} \oplus \bigoplus_{i \geq 0, \ even} \z/2\{\rho \tau^i\} \\
\oplus \bigoplus_{i \geq 0, \ odd} \z/2^{\nu(q^2-1)+\nu(i+1)-1} \{\rho \tau^i\} \quad & \text{ if } q \equiv 3 \mod 4,
\end{cases}
$$
where $|\tau| = (0,-1)$ and $|u| = |\rho| = (-1,-1)$. 

In order to make our analysis of finite fields independent of the congruence class of $q$, we make the following definition. 

\begin{defin}\label{Def:FqExponents}
For all $i \geq 0$ and odd prime powers $q$, we define
$$s_q(i) := \begin{cases}
\nu(q-1)+\nu(i+1), \quad & \text{ if } q \equiv 1 \mod 4, \\
1 \quad & \text{ if } q \equiv 3 \mod 4 \text{ and } i \equiv 0 \mod 2, \\
\nu(q^2-1) + \nu(i+1) - 1 \quad & \text{ if } q \equiv 3 \mod 4 \text{ and } i \equiv 1 \mod 2.
\end{cases}
$$
\end{defin}

With this notation, we have for all odd prime powers $q$ that 
$$\pi_{**}^{\f_q}(H\z) \cong \z\{1\} \oplus \bigoplus_{i \geq 0} \z/2^{s_q(i)}\{x_q\tau^i\},$$
where $x_q=u$ if $q \equiv 1 \mod 4$ and $x_q=\rho$ if $q \equiv 3 \mod 4$. 

\subsubsection{$\q_q$ with $q$ odd}

These results appear in \cite{Orm11}. We have
$$\pi_{**}^{\q_q}(H\z/2) \cong 
\begin{cases}
\z/2[\tau,\pi,u]/(\pi^2, u^2) \quad & \text{ if } q \equiv 1 \mod 4, \\
\z/2[\tau, \pi, \rho]/(\rho^2, \rho \pi + \pi^2) \quad & \text{ if } q \equiv 3 \mod 4
\end{cases}
$$
where $|\tau| = (0,-1)$, $|\pi| = |u| = |\rho| = (-1,-1)$. The homotopy groups of $H\z$ over $\q_q$, $q$ odd, appear in \cite[Thm. 5.8]{Orm11}:
$$\pi_{**}^{\q_q}(H\z) \cong 
\begin{cases}
\z\{1\} \quad & \text{ if } (*,*) = (0,0), \\
\z\{\pi\} \oplus \z/2^{s_q(0)}\{x_q\} \quad & \text{ if } (*,*) = (-1,-1), \\
\z/2^{s_q(0)}\{x_q\pi\} \quad & \text{ if } (*,*) = (-2,-2), \\
\z/2^{s_q(i-1)}\{\pi^{\epsilon-1}x_q \tau^{i-1}\} \quad & \text{ if } (*,*) = (-\epsilon, -i+1-\epsilon), \ i \geq 2, \ \epsilon =1 \text{ or } 2,
\end{cases}
$$
where $x_q = u$ if $q \equiv 1 \mod 4$ and $x_q=\rho$ if $q \equiv 3 \mod 4$. Equivalently, there is an additive isomorphism
\begin{equation}
\pi_{**}^{\q_q}(H\z) \cong \left(\z\{1\} \oplus \bigoplus_{i \geq 0} \z/2^{s_q(i)} \{x_q \tau^{i}\} \right) \otimes \z[\pi]/(\pi^2) \cong \pi_{**}^{\f_q}(H\z) \otimes \z[\pi]/(\pi^2);
\label{Eqn:QqToFq}
\end{equation}
if $q \equiv 1 \mod 4$, this is an isomorphism of graded rings. 

\subsubsection{$\q_2$}

These results appear in \cite{OO13}. We have
$$\pi_{**}^{\q_2}(H\z/2) \cong \z/2[\tau,\pi, u, \rho]/ (\rho^3, u^2, \pi^2, \rho u, \rho \pi, \rho^2 + u\pi),$$
where $|\tau| = (0,-1)$, $|\pi| = |u| = |\rho| = (-1,-1)$. The coefficients of $H\z$ over $\q_2$ appear in the proof of \cite[Thm. 3.19]{OO13} as
$$
\pi_{**}^{\q_2}(H\z) \cong 
\begin{cases}
\z\{1\} \quad & \text{ if } (*,*) = (0,0), \\
\z\{u\} \oplus \z\{\pi\} \oplus \z/2\{\rho\} \quad & \text{ if } (*,*) = (-1,-1), \\
\z\{y_m\tau^m\} \oplus \z/2^{s_{3}(m)}\{z_m \tau^m\} \quad & \text{ if } (*,*) = (-1,-m-1),\ m>0 , \\
\z/2^{s_3(m)}\{\rho^2 \tau^m\} \quad & \text{ if } (*,*) = (-2,-m-2),\ m\geq 0 , \\
0 \quad & \text{ otherwise.}
\end{cases}
$$
where
$y_m$ is $u$ for $m$ odd and $\pi $ for $m$ even, and $z_m$ is $\pi$ for $m$ odd and $\rho$ for $m $ even.

\subsubsection{$\r$}

These results appear, for instance, in \cite{Hil11}. We have
$$\pi_{**}^{\r}(H\z/2) \cong \z/2[\tau,\rho], \quad \pi_{**}^{\r}(H\z) \cong \z[\tau^2,\rho]/(2\rho).$$

\subsubsection{$\q$}

These results appear in \cite[Sec. 5]{OO13}. The groups $\pi_{**}^{\q}(H\z/2)$ can be obtained from the mod two Milnor K-theory of $\q$ \cite[Prop. 5.3]{OO13},
$$
k_n^M(\q) \cong 
\begin{cases}
\z/2\{1\} \quad & \text{ if } n=0, \\
\z/2\{\rho\} \oplus \bigoplus_{p \geq 2, \ prime} \z/2\{[p]\}, \quad & \text{ if } n=1, \\
\z/2\{\rho^2 \} \oplus \bigoplus_{p \geq 3, \ prime} \z/2\{a_p\}, \quad & \text{ if } n=2, \\
\z/2\{\rho^n\} \quad & \text{ if } n \geq 3,
\end{cases}
$$
by tensoring with $\z/2[\tau]$. The multiplicative structure and $\rho$-module structure is described further in \cite[Props. 5.3-5.4]{OO13}. The Hasse map
$$k^M_*(\q) \to \prod_v k_*^M(\q_v)$$
sends pure symbols to their obvious images in $\prod k_1^M(\q_v)$, $a_p$ to the unique nonzero class in $k_2^M(\q_p)$ and to $0$ in $k_2^M(\q_\ell)$, $\ell \neq p$. 

\begin{rem2}\label{Rmk:HZ2Alternate}
To simplify our comparison with $\pi_{**}^{\q}(H\z)$ and $\pi_{**}^{\q_\nu}(H\z/2)$ in the sequel, we note that 
$$\pi_{**}^{\q}(H\z/2) \cong \pi_{**}^{\r}(H\z/2) \oplus \z/2[\tau]\{[2]\} \oplus \bigoplus_{q \text{ odd}} \z/2[\tau]\{[q], a_q\}.$$
The summand $\pi_{**}^{\r}(H\z/2)$ maps isomorphically onto $\pi_{**}^{\r}(H\z/2)$ under the map
$$\pi_{**}^{\q}(H\z/2) \to \pi_{**}^{\r}(H\z/2),$$
and for each prime $q$, the elements $[q]$ and $a_q$ map to $\pi$ and $\pi x_q$, respectively, under
$$\pi_{**}^{\q}(H\z/2) \to \pi_{**}^{\q_q}(H\z/2).$$
\end{rem2}

The groups $\pi_{**}^{\q}(H\z)$ are more complicated. Specializing \cite[Thm. 5.13]{OO13} to the case $n=0$, we have
$$\pi_{**}^{\q}(H\z) \cong A \oplus B \oplus C,$$
where $A = \bigoplus_{p \equiv 3 \mod 4, \ prime} A_p$ with $A_p$ the bigraded abelian group defined by
$$(A_p)_{**} = 
\begin{cases}
\z \quad & \text{ if } (*,*) = (-1,-1), \\
\z/2 \quad & \text{ if } (*,*) = (-2,-2r-2), \ r \geq 0, \\
\z/2^{s_p(2r+1)} \quad & \text{ if } (*,*) = (-2,-2r-3), \ r \geq 0, \\
0 \quad & \text{ otherwise,}
\end{cases}
$$
$B = \bigoplus_{p \equiv 1 \mod 4, \ prime} B_p$, with
$$(B_p)_{**} = 
\begin{cases}
\z \quad & \text{ if } (*,*) = (-1,-1), \\
\z/2^{s_p(r)} \quad & \text{ if } (*,*) = (-2, -2-r), \ r \geq 0, \\
0 \quad & \text{ otherwise,}
\end{cases}
$$
and $C = C'(0) \oplus C'''(0)$, with
$$C'(0) = \z_2[\rho,\tau^2]/(2\rho),$$
$$C'''(0)_{**} = 
\begin{cases}
\z \quad & \text{ if } (*,*) = (-1,-2r-1), \ r \geq 0, \\
\z/2^{3+\nu(r+1)} \quad & \text{ if } (*,*) = (-1, -2r-2), \ r \geq 0, \\
0 \quad & \text{ otherwise.}
\end{cases}
$$

\begin{rem2}\label{Rmk:HZAlternate}
The groups $\pi_{**}^{\q}(H\z)$ have an alternative description which will be useful in the sequel. Let
$$D_q := \begin{cases}
A_q \quad & \text{ if } q \equiv 3 \mod 4, \\
B_q \quad & \text{ if } q \equiv 1 \mod 4,
\end{cases}
$$
so
$$\pi_{**}^{\q}(H\z) \cong C \oplus \bigoplus_{q \text{ odd}} D_q .$$
We have
$$(D_q)_{**} \cong \begin{cases}
\pi_{*+1,*+1}^{\f_q}(H\z) \quad & \text{ if } (*,*)\neq (0,0), \\
0 \quad & \text{ if } (*,*)=(0,0). 
\end{cases}
$$ 
Here, $\pi_{*+1,*+1}^{\f_q}(H\z)$ is shorthand for the $\pi$-divisible part of $\pi_{**}^{\q_q}(H\z)$, cf. \eqref{Eqn:QqToFq}. 
Moreover, we observe that the summands $C'(0)$ and $C'''(0)$ of $C$ can be identified with familiar objects:
$$C'(0) \cong \pi_{**}^{\r}(H\z),$$
and there is an obvious inclusion
$$C'''(0) \hookrightarrow \pi_{**}^{\q_2}(H\z)$$
with image those subgroups generated by classes of the form $\pi \tau^m$, $m \geq 0$. 

\cite[Sec. 5]{OO13} implies that all of these identifications are realized via the maps
$$\pi_{**}^{\q}(H\z) \to \pi_{**}^{\q_\nu}(H\z),$$
where $\q_\nu$ ranges over all places.
\end{rem2}

\section{Coefficient rings of $H\z/2^n$}\label{Sec:Z2n}

In \Cref{Sec:L}, we will see that the slices of $L$ are comprised of suspensions of the Eilenberg--MacLane spectra $H\z/2^n$ for various $1 \leq n \leq \infty$. In this section, we record the coefficients of these spectra over our fields of interest.

The motivic Adams spectral sequence \cite{DI10} for $H\z$ has the following form.
$$E_1^{s,w,t}= \pi_{t-s,w}(H\z/2)[h_0] \implies  \pi_{t-s,w}H\z.$$
Here we use the Adams grading; the element $h_0$ has degree $(s,w,t)=(1,0,1).$ The abutment is known, and this information determines the differentials (see \cite{Kyl15} for $F = \f_q$ and \cite{OO13} for $F = \q_q, \q$).

Similarly, we have the motivic Adams spectral sequence for $H\z/2^n$:
\begin{equation}
\label{eq:2BSS}
	E_1^{s,w,t}= \pi_{t-s,w}(H\z/2)[h_0]/h_0^{n}\implies  \pi_{t-s,w}(H\z/2^n).
\end{equation}
The differentials can be recovered from the motivic Adams spectral sequence for $H\z$. 

\begin{rem2}
Over algebraically closed fields and over the real numbers, we can compute $\pi_{**}^F(H\z/2^n)$ directly using the long exact sequence in homotopy associated to the cofiber sequence
$$H\z \xrightarrow{\cdot 2^n} H\z \to H\z/2^n.$$ 
We have
$$\pi_{**}^{\bar{F}}(H\z/2^n) \cong \z/2^n[\tau] \text{\quad and \quad}
\pi_{**}^{\r}(H\z/2^n) \cong \z/2^n[\tau^2,\rho]/(2\rho).$$
\end{rem2}

\begin{prop}
\label{prop:HZ}
	The differentials in the $F$-motivic Adams spectral sequence for $H\z/2^n$ are determined via the Leibniz rule by the following ($i\geq 1$):
	\begin{enumerate}
	\item When $F = \f_q$:
	$$d_{s_q(i-1)}\tau^{i}=x_q \tau^{i-1}h_0^{s_q(i-1)}, ~s_q(i-1)< n,$$
	where $x_q = u$ if $q \equiv 1 \mod 4$ and $x_q=\rho$ if $q \equiv 3 \mod 4$.
	\item When $F = \q_q$:
	$$d_{s_q(i-1)}\tau^{i}=x_q \tau^{i-1}h_0^{s_q(i-1)}, ~s_q(i-1)< n,$$
	where $x_q = u$ if $q \equiv 1 \mod 4$ and $x_q=\rho$ if $q \equiv 3 \mod 4$.
		\item When $F = \q_2$:
		$$d_1\tau=\rho h_0, \quad d_{3+\nu(i)}\tau^{2i}=\pi \tau^{2i-1}h_0^{3+\nu(i)}, \  \nu(i)< n-3.$$
		\item When $F = \q$:
		$$d_1 \tau = \rho h_0, \quad d_1[p]\tau = (\rho^2 + a_p)h_0 \quad p \equiv 3 \mod 4, $$
		$$d_{s_p(i-1)}[p]\tau^{i} = a_p \tau^{i-1} h_0^{s_p(i-1)}, \quad 1 \leq s_p(i-1) < n \text{~and~} i \text{~even for}~ p \equiv 3 \mod 4, $$
		$$d_{3+\nu(i)}\tau^{2i}h_0 = [2]\tau^{2i-1} h_0^{4+\nu(i)}, \quad \text{ with } \nu(i)<n-3.$$
	\end{enumerate}

\end{prop}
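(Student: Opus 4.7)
The plan is to derive the differentials in the motivic Adams spectral sequence (MASS) for $H\z/2^n$ by restriction from those in the MASS for $H\z$. The natural ring map $H\z \to H\z/2^n$ sits in a cofiber sequence $H\z \xrightarrow{2^n} H\z \to H\z/2^n$ and induces a map of spectral sequences whose effect on $E_1$-pages is the $h_0$-truncation
$$\pi_{**}(H\z/2)[h_0] \twoheadrightarrow \pi_{**}(H\z/2)[h_0]/h_0^n$$
appearing in \eqref{eq:2BSS}.

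First, I would record the generating $d_r$ differentials in the $H\z$-MASS. Its $E_1$-page is $\pi_{**}(H\z/2)[h_0]$; each generator of $\pi_{**}(H\z/2)$ supports an $h_0$-tower, and a $2^k$-torsion summand of $\pi_{**}(H\z)$ must arise by truncating one such tower at height $k$. Matching $E_\infty$ against the abutment recorded in \S\ref{SS:Coeffs} (computed in \cite{Kyl15, OO13}) then forces precisely the generating differentials listed in the proposition, without the side conditions on $r$.

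By naturality, each such $d_r$ descends to the $H\z/2^n$-MASS whenever $h_0^r$ is nonzero in the truncated ring, i.e., whenever $r<n$; this accounts for the side conditions $s_q(i-1)<n$ and $\nu(i)<n-3$ appearing in the statement. The Leibniz rule then propagates the generating differentials across the full $E_1$-page, completing the forward direction.

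The hard part will be ruling out \emph{additional} differentials in the $H\z/2^n$-MASS beyond those inherited from the $H\z$-MASS. For this I would compute $\pi_{**}(H\z/2^n)$ directly from the long exact sequence associated to $H\z \xrightarrow{2^n} H\z \to H\z/2^n$, and verify that the $E_\infty$-page obtained from the listed differentials already has the correct size: each $h_0$-tower in the $E_1$-page gets truncated to exactly the length prescribed by the LES, leaving no room for further differentials. The case $F=\q$ is the most delicate, since the $\rho$, $[p]$, and $a_p$ families interact; there I would reduce to the local cases already treated over $\r$ and $\q_q$ by pulling back along the Hasse-type maps into $\prod_\nu \pi_{**}^{\q_\nu}(H\z/2^n)$, using the injectivity of these maps on the relevant $E_1$-summands as reflected in Remarks \ref{Rmk:HZ2Alternate} and \ref{Rmk:HZAlternate}.
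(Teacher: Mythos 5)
Your proposal is correct and follows essentially the same path as the paper: deduce the $H\z$-MASS differentials from the known abutment (citing Kylling and Ormsby--{\O}stv{\ae}r), then transfer them to the $H\z/2^n$-MASS along the $h_0$-truncation map induced by $H\z \to H\z/2^n$, with the side conditions $s_q(i-1) < n$ and $\nu(i) < n - 3$ arising from when $h_0^r$ vanishes in the quotient. The paper's proof is simply a citation of the relevant lemmas for $H\z$ (relying on the paragraph preceding the proposition for the recovery mechanism), whereas you make the naturality argument, the counting check against the long exact sequence, and the reduction of the $\q$ case to local places explicit; these are the details the paper leaves implicit.
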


\begin{proof}
	\begin{enumerate}
		\item This follows from \cite[Lem. 4.2.1, Lem. 4.2.2]{Kyl15}.\footnote{Note that for $q\equiv 3 $ mod $4$, the exponent of $h_0$ in the target is off by one in \emph{loc. cit.}}
		\item This follows from \cite[Thm. 3.16, Thm. 3.17]{OO13}.
		\item This follows from \cite[Thm. 3.18, Thm. 3.19]{OO13}.
		\item This follows from \cite[Thm. 5.5, Thm. 5.8]{OO13}. 
	\end{enumerate}
\end{proof}

\begin{prop}
\label{prop:HZ-2tor}
The coefficient rings of $H\z/2^n$, $1\leq n < \infty$, are given by the following formulas. In cases (1) and (2), we write $i(w)$ for $\text{min}(s_q(w-1),n).$
\begin{enumerate}
	\item When $F = \f_q$ with $q\equiv 1 $ mod $4$:
		$$\pi_{s,w}^{\f_q} H\z/2^n=
		\begin{cases}
		\z/2^{n} \{1\} \quad & \text{ if } s = 0, w= 0,\\
\z/2^{i(-w)} \{2^{n-i(-w)}\tau^{-w}\} \quad & \text{ if } s = 0, w\leq -1, \\
\z/2^{i(-w)}\{\tau^{-1-w}x_q\} \quad & \text{ if } s=-1, w \leq -1,  \\
0 \quad & \text{ otherwise.}
\end{cases}$$
Here $x_q = u$ if $q \equiv 1 \mod 4$ and $x_q=\rho$ if $q \equiv 3 \mod 4$.
		\item When $F = \q_q$ with $q\equiv 1 $ mod $4$:
		$$\pi_{s,w}^{\q_q} H\z/2^n=
		\begin{cases}
\z/2^{i(-w)} \{2^{n-i(-w)}\tau^{-w}\} \quad & \text{ if } s = 0, w\leq 0, \\
\z/2^{i(-w)}\{\tau^{-1-w}x_q\}\oplus
\z/2^{i(-w-1)}\{2^{n-i(-w-1)}\tau^{-1-w}\pi\} \quad & \text{ if } s=-1, w \leq -1  \\
\z/2^{i(-w-1)}\{\tau^{-2-w}\pi x_q\} \quad & \text{ if } s=-2, w \leq -2\\
0 \quad & \text{ otherwise,}
\end{cases}$$
\item When $F=\q_2$:
$$
\pi_{s,w}^{\q_2}(H\z/2^n) \cong 
\begin{cases}
\z/2^{i(-w)} \{2^{n-i(-w)}\tau^{-w}\} \quad & \text{ if } s = 0, w\leq 0,\\
\z/2^n\{\tau^{-1-w}u\}\oplus \z/2^{i(-w)}\{2^{n-i(-w)}\tau^{-1-w}\pi\} \\
\oplus \z/2\{2^{n-1}\tau^{-1-w}\rho\} \quad & \text{ if } s=-1, w \leq -2 \text{ even}, \\
\z/2^{i(-1-w)}\{2^{n-i(-1-w)}\tau^{-1-w}u\}\oplus \z/2^n\{\tau^{-1-w}\pi\} \\
\oplus \z/2\{\tau^{-1-w}\rho\} \quad & \text{ if } s=-1, w \leq -1 \text{ odd}, \\
\z/2^{i(-w-1)} \{2^{n-i(-w-1)}\tau^{-2-w}\rho^2\} \quad & \text{ if } s= -2, w\leq -1, \\
0 \quad & \text{ otherwise,}
\end{cases}
$$
where $i(w)=1$ when $w$ is odd and $\text{min}(2+\nu(w),n)$ when $w$ is even.
\item When $F = \q$: 
$$\pi_{**}^{\q}(H\z/2^n) \cong C{'''}_{**}(0) \oplus C{'}_{**}(0) \oplus \bigoplus_{q \text{ odd}} D_q(n),$$
where
$$(D_q(n))_{**} \cong \begin{cases}
\pi_{*+1,*+1}^{\f_q}(H\z/2^n) \quad & \text{ if } (*,*)\neq (0,0), \\
0 \quad & \text{ if } (*,*)=(0,0),
\end{cases} 
$$ 
$$C'_{**}(0) \cong \pi_{**}^{\r}(H\z/2^n),$$
and
$$C'''(0) \hookrightarrow \pi_{**}^{\q_2}(H\z/2^n)$$ 
can be identified with the subgroups generated by classes of the form $\pi \tau^m$, $m \geq 0$. 
Here, $\pi_{*+1,*+1}^{\f_q}$ is shorthand for the $\pi$-divisible part of $\pi_{**}^{\q_q}$. 
\end{enumerate}
	
\end{prop}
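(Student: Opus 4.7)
The plan is to compute $\pi_{**}^F(H\z/2^n)$ by running the motivic Adams spectral sequence \eqref{eq:2BSS} whose $E_1$-page is $\pi_{**}^F(H\z/2)[h_0]/h_0^n$, and whose $d_r$-differentials are determined by the Leibniz rule from the formulas in \Cref{prop:HZ}. In each case the strategy is the same: read off the $E_\infty$-page, then resolve the additive (and, when needed, $h_0$-) extensions by comparing with $\pi_{**}^F(H\z)$ (which is already known) together with the cofiber sequence $H\z \xrightarrow{2^n} H\z \to H\z/2^n$. Note the truncation by $h_0^n$ is the only new feature compared with the $H\z$-computations of Kylling and Ormsby--{\O}stv{\ae}r: a differential $d_r(\tau^i) = x_q \tau^{i-1} h_0^r$ is visible in the $H\z/2^n$-spectral sequence exactly when $r < n$, and otherwise $\tau^i$ is a permanent cycle on $E_\infty$.

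For cases (1) and (2) (finite fields and $\q_q$ with $q$ odd), the $E_1$-page is a free $\z/2[h_0]/h_0^n$-module on the $\tau$-tower and its $x_q$-, $\pi$-, and $\pi x_q$-translates. Applying \Cref{prop:HZ}(1)--(2) and the Leibniz rule, the differential $d_{s_q(i-1)}$ truncates the $h_0$-tower below $\tau^i$ to length $\min(s_q(i-1), n) = i(-w)$ in the stated weight $w = -i$, and produces a class $\tau^{i-1}x_q$ on which the differentials have already fired; this explains both the $\z/2^{i(-w)}$-summands sitting on $\tau^w$ and on $\tau^{w-1}x_q$. In case (2) the same analysis on the $\pi$- and $\pi x_q$-towers contributes the remaining summands. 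Recording this carefully produces the formulas in (1) and (2).

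For case (3), the situation is more intricate because of the relation $\rho^2 = u\pi$ in $\pi_{**}^{\q_2}(H\z/2)$ and because two families of differentials appear: $d_1(\tau) = \rho h_0$ and $d_{3+\nu(i)}(\tau^{2i}) = \pi\tau^{2i-1}h_0^{3+\nu(i)}$. The first differential immediately kills the entire $\rho$-tower in positive $h_0$-filtration (leaving only a $\z/2\{2^{n-1}\rho\}$-contribution detected at filtration $n-1$), while the second controls the surviving part of the $\tau$- and $\pi$-towers, producing truncations by $i(w) = 1$ in odd weight and $i(w) = \min(2+\nu(w), n)$ in even weight. The third $\rho^2$-generated summand is the residue of the $\rho^2$-tower after the $d_1$ has fired. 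Careful bookkeeping of the $E_\infty$-page and verification that the answer is consistent with \Cref{prop:HZ} at $n=\infty$ gives (3).

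For case (4), I would avoid repeating the above analysis and instead leverage the decomposition of $\pi_{**}^{\q}(H\z/2)$ recorded in \Cref{Rmk:HZ2Alternate}. Writing the $E_1$-page as a direct sum indexed by the real place, the dyadic place, and each odd prime $q$, the differentials of \Cref{prop:HZ}(4) act summand by summand: the $\rho$-family contributes the $C'(0)$-piece, the $\tau^{2i}h_0$-family acting on $[2]$-classes contributes the $C'''(0)$-piece sitting inside $\pi_{**}^{\q_2}(H\z/2^n)$, and for each odd prime $q$, the differentials on $[q]\tau^i$ (using $a_q$ in the target via $\rho^2 + a_p$ for $p \equiv 3 \bmod 4$ and $a_p$ for $p \equiv 1 \bmod 4$) produce summands canonically identified with $\pi_{*+1,*+1}^{\f_q}(H\z/2^n)$ as in (1). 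The main obstacle, and the reason this case is worth isolating, is verifying that the maps $\pi_{**}^{\q}(H\z/2^n) \to \pi_{**}^{\q_\nu}(H\z/2^n)$ induced by base change are compatible with these identifications on the nose; this is handled exactly as in \Cref{Rmk:HZAlternate} by naturality of the motivic Adams spectral sequence and the explicit behavior of $[q] \mapsto \pi$, $a_q \mapsto \pi x_q$, $\rho \mapsto \rho$ at each place, together with the known $n = \infty$ case from \cite{OO13}.
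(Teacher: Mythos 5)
The paper's own proof is a single sentence: it defers entirely to the motivic Adams (Bockstein) spectral sequence \eqref{eq:2BSS} and the differentials recorded in \Cref{prop:HZ}. Your proposal takes exactly the same route, so in that sense it matches the paper, while also filling in substantially more detail than the paper chose to write down.

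One concrete inaccuracy in your sketch of case (3): you assert that the differential $d_1(\tau)=\rho h_0$ ``kills the entire $\rho$-tower in positive $h_0$-filtration (leaving only a $\z/2\{2^{n-1}\rho\}$-contribution detected at filtration $n-1$).'' This is internally inconsistent (killing the positive-filtration part of the $h_0$-tower on $\rho$ leaves $\rho$ in filtration $0$, not $n-1$) and, more importantly, it misses the weight-parity dependence that the proposition is designed to record. When $w$ is even, $\tau^{-1-w}\rho$ has an odd power of $\tau$, so $d_1(\tau^{-1-w}\rho)=\tau^{-2-w}\rho^2 h_0\neq 0$ kills the bottom of the tower, and indeed only the top class in filtration $n-1$ survives, matching the stated summand $\z/2\{2^{n-1}\tau^{-1-w}\rho\}$. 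When $w$ is odd, $\tau^{-1-w}\rho$ has an even power of $\tau$, so it is a $d_1$-cycle (and a permanent cycle), and it survives in filtration $0$ giving $\z/2\{\tau^{-1-w}\rho\}$ as in the statement. You do flag this parity phenomenon correctly for the $\tau$- and $\pi$-towers via $i(w)$, so the fix is to apply the same care to the $\rho$-contribution. Since you explicitly say the bookkeeping needs verification against the $n=\infty$ case, I'd call this a slip rather than a gap, but it is worth correcting because the even/odd case split in part (3) is exactly what is at issue.
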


\begin{proof}
The results follow from the motivic Adams spectral sequence in \eqref{eq:2BSS} and differentials in \Cref{prop:HZ}.	
\end{proof}

\section{Coefficient rings of $kq$}\label{Sec:kq}

In this section, we describe the effective slice spectral sequence for $kq$ over various base fields.

\begin{thm}[Thm. 3.2, \cite{ARO17}]
\label{thm:kqslice}
The nonnegative slices of $kq$ are as follows:
\[
s_{2q} kq \simeq \bigvee_{0 \leq i < q} \Sigma^{2q+2i,2q} H\z/2  \vee \Sigma^{4q,2q} H\z , 
\]
\[
s_{2q+1} kq \simeq \bigvee_{0 \leq i \leq q} \Sigma^{2q+1+2i,2q} H\z/2.
\]
The negative slices of $kq$ are zero. 
\end{thm}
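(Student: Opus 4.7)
The plan is to derive this directly from equation \eqref{eq:slicekq}, which identifies $s_* kq \simeq H\z[h_1, v_1^2]/(2h_1)$ with $|h_1| = (1,1,1)$ and $|v_1^2| = (4,0,2)$, under the convention recorded after \eqref{eq:slicekq}: a monomial of tridegree $(s, f_{AN}, w)$ contributes a summand $\Sigma^{s,w} HA$ to the weight-$w$ slice $s_w kq$, where $A = \z$ or $\z/2$ depending on whether the relation $2h_1 = 0$ kills the monomial or not. Since \eqref{eq:slicekq} itself is cited from \cite{ARO17}, the proof of \Cref{thm:kqslice} becomes a routine enumeration.

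First, I would fix a slice degree $w$ and enumerate the monomials $h_1^a v_1^{2b}$ (with $a, b \geq 0$) contributing to $s_w kq$. The tridegree of such a monomial is $(a + 4b,\, a,\, a + 2b)$, so the constraint is $a + 2b = w$. Since $a, b \geq 0$, the equation has no solutions when $w < 0$, giving $s_w kq = 0$ in the negative range. For $a = 0$ the monomial $v_1^{2b}$ is free of the relation $2h_1 = 0$ and therefore contributes an $H\z$ summand; for $a > 0$ the factor $h_1^a$ forces $2$-torsion, contributing an $H\z/2$ summand.

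Next, I would split according to the parity of $w$. When $w = 2q$, the equation $a + 2b = 2q$ forces $a$ even; writing $(a, b) = (2(q-i), i)$ for $0 \leq i \leq q$, the case $i = q$ (i.e. $a = 0$) produces the $\Sigma^{4q, 2q} H\z$ summand, while each $0 \leq i < q$ yields $h_1^{2(q-i)} v_1^{2i}$ of tridegree $(2q + 2i,\, 2(q-i),\, 2q)$, giving the $\Sigma^{2q+2i, 2q} H\z/2$ summands. When $w = 2q+1$, the constraint forces $a$ odd, so every contributing monomial has $a \geq 1$, hence gives an $H\z/2$ summand; writing $(a, b) = (2(q-i)+1, i)$ with $0 \leq i \leq q$ produces exactly the wedge in the statement.

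There is essentially no hard step: the content is encoded in \eqref{eq:slicekq}, and the proof is a translation between powers of the generators and the tridegree $(s, f_{AN}, w)$ together with a reading of the $2h_1 = 0$ relation. The only thing to watch is the bookkeeping between the indexing variable $i$ in the theorem statement and the exponents of $h_1$ and $v_1^2$, which is straightforward via the substitutions $a = 2(q-i)$ (respectively $a = 2(q-i)+1$) indicated above.
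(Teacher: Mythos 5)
The paper does not actually prove \Cref{thm:kqslice}: it is imported verbatim as a citation of \cite{ARO17}, just as the compact formula \eqref{eq:slicekq} is. So there is no in-paper argument to compare against; the paper's ``route'' is citation, while yours is to derive the explicit wedge decomposition by unpacking \eqref{eq:slicekq}. Your enumeration of monomials $h_1^a v_1^{2b}$ with tridegree $(a+4b,\,a,\,a+2b)$, the vanishing for $w<0$, and the split into $a=0$ (giving $H\z$) versus $a>0$ (giving $H\z/2$ via the relation $2h_1=0$) are all correct, as is the parity split on $w$ and the reindexing $a = 2(q-i)$ resp.\ $a = 2(q-i)+1$, $b=i$. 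This is a legitimate and essentially self-contained way to establish the statement given \eqref{eq:slicekq}, and it is probably close to how \cite{ARO17} itself packages the two formulations.

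One small caution: when you say the odd-weight case ``produces exactly the wedge in the statement,'' that is not quite literally true. Your substitution gives a monomial of tridegree $(2q+1+2i,\,2(q-i)+1,\,2q+1)$, whose weight is $2q+1$, so under the stated convention the contribution is $\Sigma^{2q+1+2i,\,2q+1}H\z/2$, whereas the displayed formula in \Cref{thm:kqslice} reads $\Sigma^{2q+1+2i,\,2q}H\z/2$. The displayed second index $2q$ is inconsistent with the fact that all summands of the $w$th slice carry weight $w$; this appears to be a typographical slip in the paper's transcription of \cite{ARO17}, and your derivation in fact recovers the corrected weight. It is worth noting this discrepancy explicitly rather than asserting an exact match.
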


For the $d_1$-differentials, note that $d_1(q): s_q kq \to \Sigma^{1,0} s_{q+1} kq$ is a map between finite sums of suspensions of motivic Eilenberg--MacLane spectra for $\z/2$ and $\z$, so it can be described via its restrictions $d_1(q,i)$ to $\Sigma^{q+i,q} HA$. Since $[H\z/2^n, \Sigma^{s,1}H\z/2^m] = 0$ for $s \geq 5$, the differential $d_1(q,i)$ splits into at most three nontrivial components.

\begin{thm}[Thm. 3.5, \cite{ARO17}]
The $d_1$-differential in the slice spectral sequence for $kq$ is given by\\
$$d_1 kq(q, i) = \begin{cases}
(0, Sq^2, Sq^3Sq^1), ~~q - 1 > i \equiv 0 \mod 4, \\
(\tau,Sq^2 +\rho Sq^1,Sq^3Sq^1), ~~ q-1>i\equiv 2\mod4,
 \end{cases}$$
$$d_1kq(q, q -1) = \begin{cases}
 	(0, Sq^2, \partial Sq^2Sq^1) ,~~q \equiv 1\mod 4, \\
 	(\tau,Sq^2+\rho Sq^1,\partial Sq^2Sq^1),~~ q\equiv 3\mod4,
 \end{cases}$$
 $$d_1kq(q,q)= \begin{cases}
 	(0,Sq^2 \circ pr,0),~~ q\equiv 0\mod4,\\
 	(\tau \circ pr,Sq^2\circ pr,\partial Sq^2Sq^1),~~ q\equiv 2\mod4.
 \end{cases}$$

\end{thm}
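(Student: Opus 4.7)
The plan is to first use \Cref{thm:kqslice} to reduce $d_1$ to a finite list of maps between motivic Eilenberg--MacLane spectra, then classify the possible operations in the relevant bidegrees, and finally pin down the exact coefficients using the ring structure and naturality.

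First, I would observe that by \Cref{thm:kqslice}, both $s_q kq$ and $s_{q+1} kq$ are finite wedges of suspensions of $H\z/2$ together with, in the even-slice case, a single $H\z$ summand. The differential $d_1: s_q kq \to \Sigma^{1,0} s_{q+1} kq$ is therefore described by a matrix of bistable motivic cohomology operations $HA \to \Sigma^{s,1} HB$ with $A,B\in\{\z,\z/2\}$. A direct count using the slice formula shows that for each source summand $\Sigma^{q+i,q}HA$, the target summands in $\Sigma^{1,0}s_{q+1}kq$ lie at bigraded shifts that can contribute at most three nontrivial components, consistent with the remark before the theorem that $[H\z/2^n,\Sigma^{s,1}H\z/2^m]=0$ for $s\geq 5$.

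Next, I would classify the possible operations. Using Voevodsky's description of the motivic Steenrod algebra, the bistable operations of low bidegree from $H\z/2$ to $\Sigma^{*,1}H\z/2$ are spanned by products of $\tau$, $\rho$, $Sq^1$, $Sq^2$, and $Sq^3$; the operations landing in an $H\z$ target involve the integral Bockstein $\partial:H\z/2\to\Sigma^{1,0}H\z$, and those emanating from $H\z$ involve the reduction $\pr:H\z\to H\z/2$. Matching bidegrees leaves only the candidates appearing in the stated formulas: $Sq^2$, $Sq^2+\rho Sq^1$, $Sq^3 Sq^1$, $\partial Sq^2 Sq^1$, and multiplication by $\tau$ (possibly precomposed with $\pr$).

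Finally, I would fix the coefficients, which is the main obstacle. The strategy is fourfold. (i) Since $kq$ is a homotopy commutative motivic ring spectrum, $d_1$ is a derivation on the slice $E_1$-page \eqref{eq:slicekq}, so it suffices to determine it on the generators $h_1$ and $v_1^2$; the Leibniz formula then propagates the answer across all $(q,i)$. (ii) Betti realization to the classical $ko$-slice spectral sequence fixes the leading Steenrod-squaring terms involving $Sq^2$. (iii) The case split on $i\bmod 4$ arises by tracking when an integral lift of the $Sq^2$-component exists: when it does not, a correction term $\tau$ (or $\rho Sq^1$ against a $\z/2$ summand) is forced, and $\tau$-linearity of the slice differentials together with the relation $Sq^2=\tau\cdot Sq^1\cdot Sq^1 + \cdots$ in positive weight explains the appearance of $Sq^2+\rho Sq^1$. (iv) At the boundary indices $i=q-1$ and $i=q$, the target or source involves an $H\z$ summand, and the Bockstein long exact sequence together with the known extension $\partial Sq^2 Sq^1$ identifies the third component. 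The hardest book-keeping is verifying consistency of these three component formulas across the four congruence classes of $q\bmod 4$, which I would check by confronting the resulting $E_2$-page with the known $\pi_{**}^F(kq)$ over an algebraically closed base field, where $\tau$ and $\rho$ collapse and only the $Sq^2$, $Sq^3 Sq^1$ contributions survive.
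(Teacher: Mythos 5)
This theorem is not proved in the present paper: it is quoted verbatim from \cite{ARO17}, so there is no in-paper proof to compare against. Your sketch is therefore being measured against the published argument of Ananyevskiy--R\"ondigs--\O stv\ae r, which proceeds quite differently: they determine the slice $d_1$ for the periodic spectrum $KQ$ by playing it off against the known first slice differential $\mathrm{Sq}^3\mathrm{Sq}^1$ for $KGL$ via the forgetful and hyperbolic maps, then exploiting $(8,4)$-periodicity and multiplicativity, and finally restricting to $kq$. Your proposal never invokes the $KQ \leftrightarrow KGL$ comparison, which is the main engine that actually pins down the coefficients.

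Beyond that methodological divergence, there are concrete gaps in your outline. First, the relation you quote in step (iii), ``$Sq^2 = \tau\cdot Sq^1\cdot Sq^1 + \cdots$,'' is false: $Sq^1 Sq^1 = 0$ in the motivic Steenrod algebra (the relevant Adem-type relation is $Sq^2 Sq^2 = \tau Sq^3 Sq^1$, possibly with a $\rho$-correction), so the stated mechanism for producing the $Sq^2 + \rho Sq^1$ vs.\ $Sq^2$ dichotomy does not work as written. Second, your final consistency check against $\pi_{**}^F(kq)$ over an algebraically closed field cannot detect the $\rho Sq^1$ and $\tau$ correction terms at all, since $\rho = 0$ there; precisely the parts of the formula that encode the case split on $i \bmod 4$ and $q \bmod 4$ are invisible after base change to $\bar{F}$, so this check cannot discharge them. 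Third, the theorem asserts an identity of maps of motivic spectra $s_q kq \to \Sigma^{1,0}s_{q+1}kq$, whereas the Leibniz rule constrains only the induced differential on the $E_1$-page; passing from the latter to the former requires an additional Yoneda-style argument about bistable operations being determined by their effect on homotopy, which you would need to spell out and which is delicate once non-$\tau$-linear operations such as $Sq^1$ appear. In short, the proposal reconstructs the shape of the answer plausibly but does not supply the mechanism that forces the $\rho$-dependent and $\tau$-dependent components, which is where the real content lies.
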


\subsection{Algebraically closed fields}\label{SS:Ckq}

Let $F = \bar{F}$ be an algebraically closed field. The effective slice spectral sequence for $kq$ over $F$ was computed in \cite[Sec. 3]{BIK22}. The $E_1$-term of the ESSS is depicted in \Cref{Fig:CE1kq}. 

The differential $d_1(v_1^2) = \tau h_1^3$ follows by Betti realization \cite[Prop. 3.2]{BIK22}, with all remaining differentials following by the Leibniz rule. The spectral sequence collapses at $E_2$; the resulting $E_\infty$-page is depicted in \Cref{Fig:CEookq}. 

\subsection{Finite fields}\label{SS:Fqkq}

The $E_1$-page of the ESSS over $\f_q$ can be obtained from the $E_1$-page of the ESSS over algebraically closed fields (\Cref{Fig:CE1kq}) by letting a rectangle denote $\z$ instead of $\z[\tau]$, and then juxtaposing a copy of the $E_1$-page, shifted by $(-1,1)$, with each rectangle replaced by a diamond denoting the $u$- or $\rho$-divisible part $\pi_{**}^{\f_q}(H\z)$. The resulting $E_1$-page is depicted in \Cref{Fig:FqE1kq}. 

The differential $d_1(v_1^2) = \tau h_1^3$ follows from base change to the algebraic closure, and all remaining differentials follow by the Leibniz rule. The resulting pattern of $d_1$-differentials is depicted in \Cref{Fig:FqE1kq}.

The ESSS collapses for degree reasons at $E_2$. The $E_\infty$-page is shown in \Cref{Fig:FqEookq}. The hidden extensions $\mathsf h \cdot 2v_1^{2+4k} u = \tau^2 h_1^3v_1^{4k}$, $k \geq 0$, follow by comparison to Kylling's computation of $\pi_{**}^{\f_q}(kq)$ via the motivic Adams spectral sequence \cite[Sec. 4]{Kyl15}.

\subsection{$\mathbb{Q}_q$ with $q$ odd}\label{SS:Qqkq}

The $E_1$-page of the ESSS over $\q_q$ can be obtained from the $E_1$-page of the ESSS over $\f_q$ using the additive isomorphism \eqref{Eqn:QqToFq}, which implies that there is an additive isomorphism 
$$E_1^{\q_q}(kq) \cong E_1^{\f_q}(kq) \otimes \z[\pi]/(\pi^2),$$
where $|\pi| = (-1,0,-1)$. Graphically, this means the $E_1$-term over $\q_q$ consists of two copies of the $E_1$-page over $\f_q$: one copy as it appears in \Cref{Fig:FqE1kq}, and one copy shifted by $(-1,1)$. The result is depicted in \Cref{Fig:QqE1kq}.

As in the case of finite fields, all $d_1$-differentials follow from base change to the algebraic closure and $\pi^\delta x_q^\epsilon \tau^n$-linearity for the appropriate choices of $\delta, \epsilon \in \{0,1\}$. The resulting $E_\infty$-page is identical the the $E_\infty$-page over $\f_q$, tensored with $\z[\pi]/(\pi^2)$. Hidden extensions follow from similar arguments. The resulting groups are depicted in \Cref{Fig:QqEookq}.

\subsection{$\mathbb{Q}_2$}\label{SS:Q2kq}

Since $\rho^2$ is non-zero over $\q_2$, the ESSS over $\q_2$ has a different $d_1$-differential pattern. The $d_1$-differentials in the ESSS for $KQ$ (and consequently, $kq$) were computed in terms of Steenrod operations in \cite[Thm. 5.5]{RO16}. The $E_1$-term is depicted in \Cref{Fig:Q2E1kq}.

In \Cref{Fig:Q2E1kq}, the red differentials are given by taking $Sq^2$, the brown differentials are given by 
multiplying by $\tau$, and the blue differentials are $Sq^2+\rho Sq^1$

\begin{exm}\label{exm:Q2kq}
We explain the $d_1$ calculation using sample computations. We suggest readers compare the following examples with Figure \ref{Fig:Q2E1kq} and Figure \ref{Fig:Q2E2kq}. 
\begin{enumerate}
	\item There is a red differential between the class $\z/2[\tau]\{h_1\}$ at $(1,1)$ and the class $\z/2[\tau]\{h_1^2 \rho^2\}$ at $(0,4)$. The formula for the differential is 
$$d_1(\tau^{4n+2}h_1)=\tau^{4n+1}\rho^2 h_1^2 
\text{~and~}
d_1(\tau^{4n+3} h_1)=\tau^{4n+2} \rho^2  h_1^2, n\geq 0.
$$
Therefore, on the $E_2$-page, we have a $\z/2\{1,\tau^3\}[\tau^4]$ in the degree of the target and a $\z/2\{1,\tau\}[\tau^4]$ in the degree of the source.
\item There is a differential from the class $\pi_{-2,*}^{\q_2}(H\z)\{v_1^2\}$ at $(2,2)$, and another one from the class $\z/2[\tau]\{h_1^2\}$ at the same degree, both hitting $\z/2[\tau]\{\rho^2 h_1^3\}$
at $(1,5)$. The formulas are 
$$d_1(\tau^{n}\rho^2 v_1^2 )=\tau^{n+1}\rho^2 h_1^3, \text{~and~} d_1(\tau^{i}h_1^2 )=\tau^{i-1}\rho^2 h_1^3, ~~i=4k+2, 4k+3. 
$$
We see that 
$\tau^{4n+1}\rho^2v_1^2$ and $\tau^{4n+3}h_1^2$ hit the same target, and $\tau^{4n}v_1^2$ and $\tau^{4n+2}h_1^2$ also hit the same target. Therefore, the sums of the following forms survive:
 $$\tau^{4n+2}h_1^2 + \tau^{4n}\rho^2v_1^2
 \quad  \text{~and~} \quad
 \tau^{4n+3}h_1^2 + \tau^{4n+1}\rho^2v_1^2, \quad n\geq 0.
 $$
 As a result, the surviving classes at degree $(2,2)$ are those of the form above (represented by the green diamond), and 
 $\tau^{4n}, \tau^{4n+1}$ for $n\geq 0$ (represented by the black dot with subscript 1).


At degree $(3,3)$, the surviving classes are 
$$\tau^{4n} h_1^3, \tau^{4n+5} h_1^3, \tau^{4n+3} h_1^3 + \tau^{4n+1}\rho^2 v_1^2, \tau^{4n+2} h_1^3 + \tau^{4n}\rho^2 v_1^2 \text{~for~} k\geq 0,$$ represented by a violet dot. 
The $h_1$-extensions are suggested by the names.
\end{enumerate}
	
\end{exm}

The $E_2$-page is depicted in \Cref{Fig:Q2E2kq}. There are no room for $d_2$-differentials, therefore we have $E_2=E_\infty.$

\subsection{$\mathbb{R}$}\label{SS:Rkq}

The ESSS for $kq$ over $\r$ is the subject of \cite[Sec. 4]{BIK22}. There are two interesting differentials over $\r$ \cite[Prop. 5.2]{BIK22}:
$$d_1(v_1^2) = \tau h_1^3, \quad d_1(\tau^2) = \rho^2 \tau h_1.$$
The $E_1$-page is depicted in \cite[Fig. 5]{BIK22}, and the $E_\infty$-page is depicted in \cite[Figs. 6-8]{BIK22}. 

\subsection{$\mathbb{Q}$}\label{SS:Qkq}

We will now analyze the ESSS for $kq$ over $\q$. To begin, recall the presentations of $\pi_{**}^{\q}(H\z/2)$ and $\pi_{**}^{\q}(H\z)$ from \Cref{Rmk:HZ2Alternate} and \Cref{Rmk:HZAlternate}. Since $E_1^{\q}(kq)$ is comprised of shifts of these groups, it naturally decomposes as
$$E_1^{\q}(kq) \cong E_1^+(kq)  \oplus E_1^-(kq),$$
where $E_1^+(kq) \subseteq E_1^{\q}(kq)$ is obtained by making the replacements
$$\pi_{**}^{\q}(H\z/2) \rightsquigarrow \bigoplus_{q \text{ odd}} \z/2[\tau]\{[q],a_q\}, \quad \pi_{**}^{\q}(H\z) \rightsquigarrow \bigoplus_{q \text{ odd}} D_q,$$
and $E_1^-(kq) \subseteq E_1^{\q}(kq)$ is obtained by making the replacements
$$\pi_{**}^{\q}(H\z/2) \rightsquigarrow \pi_{**}^{\r}(H\z/2) \oplus \z/2[\tau]\{[2]\}, \quad \pi_{**}^{\q}(H\z) \rightsquigarrow C \cong \pi_{**}^{\r}(H\z) \oplus C'''(0).$$

Graphically, we can obtain $E_1^{\q}(kq)$ from $E_1^{\bar{F}}(kq)$ (\Cref{Fig:CE1kq}) by replacing each bullet by a copy of $\pi_{**}^{\q}(H\z/2)$ and replacing each square by a copy of $\pi_{**}^{\q}(H\z/2)$; the replacements are shown in \Cref{Fig:QF2Z}. 

The purpose of this decomposition is to understand the map
$$E_1^{\q}(kq) \to E_1^{\q_\nu}(kq)$$
for each place $\nu$ of $\q$. In particular, we see that the Hasse map
$$E_1^{\q}(kq) \to \prod_{\nu} E_1^{\q_\nu}(kq)$$
is injective. It follows that the nontrivial $d_1$-differentials over $\q$ are generated by 
$$d_1([q] \tau^i v_1^{2k} ) = [q] \tau^{i+1} h_1^3 v_1^{2k-2}, \quad d_1(a_{q'} \tau^i v_1^{2k}) = a_{q'} \tau^{i+1} h_1^3 v_1^{2k-2}, \quad d_1(\tau^2) = \rho^2 \tau h_1,$$
where $q$ ranges over all primes, $q'$ ranges over all odd primes, and $i$ and $k$ range over all nonnegative integers. Note that none of the more exotic differentials (red ones in \Cref{Fig:Q2E1kq}) from the $\q_2$-case lift to $\q$ since the sources and targets do not lift to $\q$. 

There is no room for longer differentials, so $E_2^{\q}(kq) \cong E_\infty^{\q}(kq)$. All hidden extensions can be handled by comparison with the local places. 

\begin{rem2}
Our description of the $E_1$-term over $\q$ makes clear that the map
$$E_1^{\q}(kq) \to \prod_{\nu} E_1^{\q_\nu}(kq)$$
is injective. Since all of the differentials over $\q$ are lifted from differentials over the local places, we conclude that the Hasse map
$$\pi_{**}^{\q}(kq) \to \prod_{\nu} \pi_{**}^{\q_\nu}(kq)$$
is injective, i.e., $kq$ satisfies the motivic Hasse principle in the sense of \cite[Sec. 4]{OO13}, 
\end{rem2}

\section{Coefficient rings of $L$ over prime fields}\label{Sec:L}

In this section, we describe the effective slice spectral sequence for $L$ over various base fields.


\begin{prop}
We have
$$s_0 L \simeq \Sigma^{-1,0} H\z \vee H\z,$$
and the positive slices of $L$ are as follows:
$$
s_q L\simeq \bigvee_{-1\leq i \leq q-2}\Sigma^{q+i, q} H\z/2 \vee \Sigma^{2q-1,q}H\z/2^{a_q},
$$
where $a_q=\nu_2(3^q-1).$ The negative slices of $L$ are zero. 
\end{prop}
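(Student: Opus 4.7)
The strategy is to apply the slice functor to the defining fibre sequence of $L$. Since $s_q$ is triangulated, we obtain fibre sequences $s_q L \to s_q kq \xrightarrow{s_q(\psi^3-1)} s_q kq$ for each $q \in \z$. The negative slices vanish because $s_q kq = 0$ for $q < 0$ by \Cref{thm:kqslice}. For $q \geq 0$, \Cref{thm:kqslice} together with \eqref{eq:slicekq} present $s_q kq$ as an explicit wedge of suspensions of $H\z$ and $H\z/2$, so the task reduces to determining $s_q(\psi^3-1)$ and taking its fibre summand by summand.

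The key computational input is that, on $s_*kq \simeq H\z[h_1, v_1^2]/(2h_1)$, $\psi^3$ fixes $h_1$ (automatic, since $h_1$ is $2$-torsion and $\psi^3 \equiv 1 \pmod 2$) and sends $v_1^2$ to $9 v_1^2$ (the slice-theoretic avatar of the classical identity $\psi^3 \beta = 3^4 \beta$ on $\pi_8 ko$, inherited from the construction of \cite{BH20}). Multiplicativity of $s_*(\psi^3)$ then gives $(\psi^3 - 1)(h_1^a (v_1^2)^b) = (3^{2b} - 1)\, h_1^a (v_1^2)^b$. In particular, this is multiplication by $3^q - 1$ on the $H\z$-generator $(v_1^2)^{q/2}$ of $s_q kq$ when $q$ is even, and is null on every $H\z/2$-summand (since $3^{2b} - 1$ is even).

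Computing fibres summand by summand then yields the formula. Each $H\z/2$-summand $\Sigma^{q+2i, q} H\z/2$ of $s_q kq$ contributes $\Sigma^{q+2i, q} H\z/2 \vee \Sigma^{q+2i-1, q} H\z/2$, since the fibre of $0: X \to X$ is $X \vee \Sigma^{-1,0} X$. When $q$ is even, the $H\z$-summand $\Sigma^{2q, q} H\z$ contributes $\Sigma^{2q-1, q} H\z/(3^q - 1)$, which $2$-locally equals $\Sigma^{2q-1, q} H\z/2^{a_q}$ with $a_q = \nu_2(3^q-1)$. Assembling and reindexing gives the stated formula. For $q$ odd, $a_q = \nu_2(3^q - 1) = 1$ and there is no $H\z$-summand, so one of the $H\z/2$-summands produced by the null fibres plays the role of $\Sigma^{2q-1, q} H\z/2^{a_q}$. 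The case $q = 0$ is separate: $s_0 kq = H\z$ and $\psi^3$ acts as the identity, so $s_0 L \simeq H\z \vee \Sigma^{-1,0} H\z$.

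The main obstacle is pinning down $\psi^3(v_1^2) = 9 v_1^2$ at the slice level. A bidegree count identifies $\pi_{4,2}\, s_2 kq \cong \z \cdot v_1^2$ once one uses the vanishing $\pi_{2,0} H\z/2 = 0$ coming from the standard range of motivic cohomology of a field, so $\psi^3(v_1^2)$ is forced to be an integer multiple of $v_1^2$; Betti realization against the classical identity $\psi^3\beta = 3^4\beta$ (or equivalently the Bachmann--Hopkins construction of $\psi^3$) then pins down the integer to be $9$.
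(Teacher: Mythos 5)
Your proposal is correct and follows the same strategy the paper sketches: apply the exact slice functor to the defining fibre sequence $L \to kq \xrightarrow{\psi^3-1} kq$, use the known slices of $kq$, and read off $s_q L$ from the effect of $\psi^3 - 1$ (which is $3^q-1$ on the $H\z$-generator $(v_1^2)^{q/2}$ and null on all $H\z/2$-summands). Two small remarks: the classical identity you want to quote is $\psi^3 = 9$ on $\pi_4 ko$ (the Betti realization of $v_1^2 \in \pi_{4,2}$), not $\psi^3\beta = 3^4\beta$ on $\pi_8 ko$, though the conclusion $\psi^3(v_1^2) = 9v_1^2$ is unaffected; and ``taking fibres summand by summand'' tacitly assumes $s_q(\psi^3-1)$ is diagonal with respect to the wedge decomposition, which one can sidestep (as the paper does) by just reading off $\pi_{**}(s_q L)$ from the long exact sequence and using that slices over a field are $H\z$-modules, hence wedges of Eilenberg--MacLane spectra determined by their homotopy.
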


\begin{proof}
This follows from the long exact sequence in homotopy associated to the fiber sequence of motivic spectra
$$s_q L \to s_q kq \xrightarrow{\psi^3-1} s_q kq.$$
\end{proof}

We use the following convention to denote the slices of $L$.
Let $L(q,i)$ denote the summand $\Sigma^{q+i-1,q}H\z/2^n$ in the $q$th slice, where $n$ is $a_q$ if $q=i>0$, $\infty$ if $q-0$, and $1$ otherwise.

Consider maps between Eilenberg--MacLane spectra. We have 
$
[H\z/2^n, \Sigma^{s,1}H\z/2^m]= 0
$
 for $s\geq 5.$ Therefore, restricting to the summand $L(q,i)=\Sigma^{q+i-1,q}H\z/2^n$ in the $q$th slice, the summands in the $(q+1)$th slice that has possibly nontrivial $d_1$ components are $L(q,j)$ for $ i-1\leq j\leq i+3$.

\begin{thm}
\label{thm:sliceL}
When $char(F) \neq  2$ the $d_1$ differential in the slice spectral sequence for $L$ is given by\\
$$d_1 L(q, i) = \begin{cases}
(Sq^3Sq^1, 0, Sq^2, 0, 0), ~~q - 1 > i \equiv 0 ~~mod~~ 4, \\
(Sq^3Sq^1,0, Sq^2,0,0 ), ~~ q-1>i\equiv 1~~ mod~~4,\\
(Sq^3Sq^1,0, Sq^2 +\rho Sq^1,0,\tau ), ~~ q-1>i\equiv 2~~ mod~~4,\\
(Sq^3Sq^1,0, Sq^2 +\rho Sq^1,0, \tau ), ~~ q-1>i\equiv 3~~ mod~~4,\\
 \end{cases}$$
 
 $$d_1 L(q, q) = \begin{cases}
(0,Sq^2\partial_2, Sq^2 \circ pr_2,0,0),~~ q\equiv 0~~mod~~4, \\
(0,inc^2\circ Sq^2Sq^1, Sq^2,0,0),~~ q\equiv 1~~mod~~4,\\
(0,Sq^2\partial_2,Sq^2\circ pr_2, \tau \circ \partial_2, \tau \circ pr_2),~~ q\equiv 2~~mod~~4,\\
(0,inc^2\circ Sq^2Sq^1,Sq^2+\rho Sq^1,0,\tau),~~ q\equiv 3~~mod~~4,
 \end{cases}$$

  $$d_1 L(q, q-1) = \begin{cases}
(Sq^3Sq^1,0, Sq^2+\rho Sq^1,0,\tau),~~ q\equiv 0~~mod~~4, \\
(\partial^2 Sq^2 Sq^1,0,  Sq^2, 0,0),~~ q\equiv 1~~mod~~4,\\
(Sq^3Sq^1,0 , Sq^2,0,0),~~ q\equiv 2~~mod~~4,\\
(\partial^2 Sq^2Sq^1,0, Sq^2+\rho Sq^1, 0, \tau ),~~ q\equiv 3~~mod~~4.
 \end{cases}$$

Here $\partial_2, \partial^2, inc^2, pr_2$ denote the maps in the homotopy cofiber sequence
$$H\z/2^{n}\to H\z/2^{n+1}\xrightarrow{pr_2} H\z/2\xrightarrow{\partial^2} \Sigma^{1,0}H\z/2^n,$$
$$H\z/2\xrightarrow{inc^2} H\z/2^{n+1}\xrightarrow{} H\z/2^{n}\xrightarrow{\partial_2} \Sigma^{1,0}H\z/2$$ 
for some $n$, where $n$ is decided by the target slice.

\end{thm}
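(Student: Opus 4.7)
The plan is to deduce $d_1 L$ from the fiber sequence of slices
$$s_* L \to s_* kq \xrightarrow{\psi^3 - 1} s_* kq$$
together with the explicit $d_1 kq$ formulas recalled above from \cite{ARO17}. Since the effective slice spectral sequence is natural under the inclusion $L \to kq$ of the fiber, the components of $d_1 L(q,i)$ landing in ordinary $H\z/2$ summands of $s_{q+1} L$ will be read off directly from the corresponding components of $d_1 kq$.

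First, I would identify each summand $L(q,i)$ within the long exact sequence on slices. For $0 \le i \le q-1$, the summand $L(q,i) = \Sigma^{q+i-1,q} H\z/2$ corresponds to either a kernel or $\Sigma^{-1}$-shifted cokernel contribution of $\psi^3 - 1$ on an $H\z/2$ summand of $s_q kq$ or $s_{q+1} kq$, on which $\psi^3 - 1$ vanishes. The top summand $L(q,q) = \Sigma^{2q-1,q} H\z/2^{a_q}$ arises from the action of $\psi^3 - 1$ on the unique $H\z$ summand of $s_q kq$, which is multiplication by $3^q - 1 = 2^{a_q} \cdot (\text{odd})$; after $2$-completion, its fiber is $\Sigma^{-1} H\z/2^{a_q}$. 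Naturality then produces the $Sq^2$, $Sq^2 + \rho Sq^1$, $\tau$, and $Sq^3 Sq^1$ components in the generic rows $0 \le i \le q-2$ of the theorem, essentially by copying the corresponding components of $d_1 kq$.

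Second, I would handle the exceptional rows $d_1 L(q,q)$ and $d_1 L(q,q-1)$, where either the source or the target is the higher-order summand $H\z/2^{a_q}$ or $H\z/2^{a_{q+1}}$ rather than an $H\z/2$. Here the $d_1 kq$ components that enter or exit the top $H\z$ summand of $kq$ (such as the $\partial Sq^2 Sq^1$ component of $d_1 kq(q,q-1)$ and the $\tau \circ pr$ and $Sq^2 \circ pr$ components of $d_1 kq(q,q)$) must be factored through the cofiber sequences
$$H\z/2^n \to H\z/2^{n+1} \xrightarrow{pr_2} H\z/2 \xrightarrow{\partial^2} \Sigma^{1,0} H\z/2^n, \qquad H\z/2 \xrightarrow{inc^2} H\z/2^{n+1} \to H\z/2^n \xrightarrow{\partial_2} \Sigma^{1,0} H\z/2,$$
yielding the connecting-map factors $\partial_2$, $\partial^2$, $inc^2$, and $pr_2$ appearing in the theorem. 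For example, $\partial Sq^2 Sq^1$ becomes either $inc^2 \circ Sq^2 Sq^1$ or $\partial^2 \circ Sq^2 Sq^1$ depending on whether the extended $H\z/2^{a_q}$-summand lies at the source or target, and $\tau \circ pr$ (respectively $Sq^2 \circ pr$) splits into $\tau \circ pr_2$ or $\tau \circ \partial_2$ (respectively $Sq^2 \circ pr_2$ or $Sq^2 \partial_2$) by the same dichotomy.

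The main obstacle will be the systematic bookkeeping required to determine, in each of the twelve rows of the theorem (indexed by the residue of $q$ or $i$ modulo $4$), precisely which of the five potentially nontrivial components of $d_1 L(q,i)$ is nonzero and which connecting map produces the correct factorization. No new conceptual input beyond the defining fiber sequence and naturality is needed, but carefully matching the $5$-tuple notation to the wedge decomposition of $s_{q+1} L$ and to the conventions of \cite{ARO17} will constitute most of the verification.
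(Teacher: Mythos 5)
Your proposal is correct and follows essentially the same route as the paper's proof. The paper likewise deduces the formulas from the defining fiber sequence $L \to kq \xrightarrow{\psi^3-1} kq$ applied slicewise, reading off the $d_1 kq$ components by naturality and factoring those components involving $H\mathbb{Z}$ summands of $s_*kq$ through the $H\mathbb{Z}/2^n$ cofiber sequences; the published argument explicitly works out one representative exceptional row (the case $L(q,q)$, $q\equiv 0\pmod 4$) via a commutative diagram and leaves the remaining cases to the same bookkeeping you describe.

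One small point worth tightening: naturality of the $d_1$ differential under the single map $L \to kq$ only determines the components of $d_1^L$ between ``kernel'' summands of $s_*L$. To pin down the components involving the $\iota$-type (shifted-cokernel) summands you must also invoke naturality under the boundary map $\Sigma^{-1}kq \to L$, i.e.\ use the full morphism of cofiber sequences of slices rather than just the inclusion of the fiber. Your discussion of the $inc^2$, $\partial^2$, $pr_2$, $\partial_2$ factorizations shows you are implicitly using both maps, so this is a matter of phrasing rather than a gap.
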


\begin{proof}

The results are deduced from \Cref{thm:kqslice} and the homotopy cofiber sequence 
\begin{equation}
\label{eq:defofLseq}
	L\to kq\xrightarrow{\psi^3-1}kq
\end{equation}
that defines $L$. We explain by way of example how to deduce the formulas. 

Consider the differential on $L(q,q)$ for $q\equiv 0~mod~4.$ By (\ref{eq:defofLseq}), this slice summand is obtained by the homotopy cofiber sequence 
$$\Sigma^{-1,0}kq (q,q)\xrightarrow{p} L(q,q)\xrightarrow{d} kq(q,q) \xrightarrow{\times a_q} kq (q,q).$$
By \Cref{thm:kqslice}, both $kq(q,q)$ support a differential expressed by $(0,Sq^2 \circ pr,0)$.
Thus we have
$$
\begin{tikzcd}
\Sigma^{-(2q-1,q)}\Sigma^{-1,0}kq(q,q)\ar[equal,r]&H\z \ar[r,"pr"]\ar[d,"p"'] & H\z/2 \ar[r,"Sq^2"] & H\z/2\\
\Sigma^{-(2q-1,q)}L(q,q)\ar[equal,r]&H\z/2^n\ar[ur, "pr^{2^{a_q}}_2"']\ar[dr, "\partial^{2^{a_q}}_2"]
\ar[d,"d"'] & &\\ 	
\Sigma^{-(2q-1,q)}kq(q,q)\ar[equal,r] &\Sigma H\z \ar[r,"pr"] &  \Sigma H\z/2 \ar[r,"Sq^2"] & \Sigma H\z/2
\end{tikzcd}.
$$
The composite from the middle to the top right gives the third component $Sq^2\circ pr_2$; the composite from the middle to the bottom right gives the second component $Sq^2\circ \partial_2$. Therefore we obtain that the $d_1$ differential on $L(q,q)$ is 
$(0,Sq^2\partial_2, Sq^2 \circ pr_2,0,0)$.
\end{proof}

\subsection{Algebraically closed fields}\label{SS:CL}

The ESSS for $L$ over algebraically closed fields was computed in \cite[Sec. 3]{BIK22}. The multiplicative generators of the $E_1$-term appear in \cite[Table 3]{BIK22} and the $E_1$-term is shown in \cite[Fig. 3]{BIK22}. The $d_1$-differentials appear in \cite[Table 4]{BIK22} and the resulting $E_2=E_\infty$-page appears in \cite[Fig. 4]{BIK22}. Hidden extensions are handled in \cite[Prop. 3.16]{BIK22}. 

For ease of reference, we have reproduced \cite[Fig. 3]{BIK22} and \cite[Fig. 4]{BIK22} in \Cref{Fig:CE1L} and \Cref{Fig:CEooL}, respectively. Additive generators are given in \Cref{Table:CLGens}, with relations appearing immediately after. 

\subsection{Finite fields}\label{SS:FqL}

To compute the $E_1$-term of the ESSS for $L$ over finite fields, we must analyze the effect of $\psi^3-1$ on the slices of $kq$. Comparison with the algebraically closed case implies that the only possible nontrivial action of $\psi^3-1$ is on the integer slices, i.e.,
$$\psi^3 -1: \pi_{**}^{\f_q}(H\z)\{v_1^{2k}\} \to \pi_{**}^{\f_q}(H\z)\{v_1^{2k}\},$$
which may be identified with 
$$\cdot 2^{\nu(3^{2k}-1)} = \cdot 2^{\nu(k)+3} : \pi_{**}^{\f_q}(H\z)\{v_1^{2k}\} \to \pi_{**}^{\f_q}(H\z)\{v_1^{2k}\}.$$
Recalling the functions $s_q(i)$ and notation $x_q$ from \Cref{Def:FqExponents}, we may write this more explictily as
$$\cdot 2^{\nu(k)+3} : \z\{v_1^{2k}\} \oplus \bigoplus_{i \geq 0} \z/2^{s_q(i)}\{v_1^{2k}x_q \tau^i\}.$$
The kernel and cokernel of
$$\cdot 2^{\nu(k)+3}: \z\{v_1^{2k}\} \to \z\{v_1^{2k}\}$$
are $0$ and $\z/2^{\nu(k)+3}$, respectively. On the components indexed by $x_q v_1^{2k} \tau^i$, 
$$\cdot 2^{\nu(k)+3} : \z/2^{s_q(i)}\{x_q v_1^{2k} \tau^i\} \to \z/2^{s_q(i)}\{x_q v_1^{2k} \tau^i\},$$
we have
$$\ker \cong \begin{cases}
\z/2^{s_q(i)}\{x_q v_1^{2k}\tau^i\} \quad & \text{ if } s_q(i) \leq \nu(k)+3, \\
\z/2^{\nu(k)+3}\{2^{s_q(i)-\nu(k)-3} x_q v_1^{2k}\tau^i\} \quad & \text{ if } s_q(i) > \nu(k)+3,
\end{cases}
$$
and
$$\coker \cong \begin{cases}
\z/2^{s_q(i)}\{x_q v_1^{2k}\tau^i\} \quad & \text{ if } s_q(i) \leq \nu(k)+3, \\
\z/2^{s_q(i)-\nu(k)-3}\{x_q v_1^{2k} \tau^i\} \quad & \text{ if } s_q(i) > \nu(k)+3.
\end{cases}
$$

\begin{defin}\label{Def:KCFq}
For each $k \geq 0$, we define graded groups $K_q(k)$ and $C_q(k)$ as follows. Let 
$$K_q(0) = C_q(0) := \pi_{-1,*}^{\f_q}(H\z)$$
denote the $x_q$-divisible part of $\pi_{**}^{\f_q}(H\z)$. For all $k \geq 1$, let
$$K_q(k) := \bigoplus_{i \geq 0} \ker(\cdot 2^{\nu(k)+3} : \z/2^{s_q(i)}\{x_q v_1^{2k} \tau^i\} \to \z/2^{s_q(i)}\{x_q v_1^{2k}\tau^i\}),$$
$$C_q(k) := \bigoplus_{i \geq 0} \coker(\cdot 2^{\nu(k)+3} : \z/2^{s_q(i)}\{x_q v_1^{2k} \tau^i\} \to \z/2^{s_q(i)}\{x_q v_1^{2k}\tau^i\})$$
be the sums of the kernels and cokernels, respectively, described above. 

Let $\tilde{K}_q(k)$ denote the groups obtained from $K_q(k)$ as follows. If $k$ is even, then $\tilde{K}_q(k) := K_q(k)$. If $k$ is odd, then:
\begin{enumerate}
\item If $s_q(i) \leq \nu(k)+3$, then the $i$-th summand of $\tilde{K}_q(k)$ is the group obtained from the $i$-th summand of $K_q(k)$ by decreasing the order of $2$-torsion by exactly $1$ in each summand. 
\item If $s_q(i) > \nu(k)+3$, the $i$-th summand of $\tilde{K}_q(k)$ is the same as the $i$-th summand in $K_q(k)$. 
\end{enumerate}

Let $\tilde{C}_q(k)$ denote the groups obtained from $C_q(k)$ as follows. If $k$ is even, then $\tilde{C}_q(k) := C_q(k)$. If $k$ is odd, then $\tilde{C}_q(k)$ is obtained by decreasing the order of $2$-torsion by exactly $1$ in each summand of $C_q(k)$. 
\end{defin}

\Cref{Def:KCFq} allows us to concisely describe the $E_1$- and $E_\infty$-terms of the ESSS for $L$ in chart form. The $E_1$-term is depicted in \Cref{Fig:FqE1L}.

All of the differentials follow from base change to the algebraic closure and $x_q$-linearity of the $d_1$-differentials. Since $s_q(i) \geq 1$ for all $i \geq 0$, the effect of each \emph{nontrivial} differential is easy to describe: simply reduce the order of $2$-torsion in the source and target by $2$. However, describing which differentials are nontrivial is somewhat subtle.

The red differentials out of the dark green boxes (i.e., out of $\z/2^?\{\iota \tau^i v_1^{2k}\}$) and dark green left-pointing triangles (i.e., out of $C_q(k)$) are nontrivial for all $i \geq 0$, but the green differentials out of the right-pointing triangles (i.e., out of $K_q(k)$) are trivial on some summands. Recall that the generator of the $i$-th summand in $K_q(k)$ for $k$ odd is 
$$
\begin{cases}
x_q v_1^{2k} \tau^i \quad & \text{ if } s_q(i) \leq 3, \\
2^{s_q(i)-\nu(k)-3} x_q v_1^{2k} \tau^i \quad & \text{ if } s_q(i) > 3.
\end{cases}
$$
Here, we have used that $\nu(k)=0$ if $k$ is odd to rewrite $\nu(k)+3$ as $3$. 

If $s_q(i) \leq 3$, then there is a differential
$$d_1(x_q v_1^{2k} \tau^i) = x_q h_1^3 v_1^{2k-2} \tau^{i+1},$$
but if $s_q(i) > \nu(k)+3$, then we must have
$$d_1(2^{s_q(i)-\nu(k)-3}x_q v_1^{2k} \tau^i) = 0$$
since
$$d_1(2^{s_q(i)-\nu(k)-3}v_1^{2k})=0$$
by base change to the algebraic closure. 

\begin{rem2}
In practice, it is straightforward to determine on which summands the green differentials are nontrivial for a given $q$. For instance, if $q=3$, then:
\begin{itemize}
\item If $i \not\equiv 3 \mod 4$, then $s_3(i) \leq 3$. Indeed, if $i$ is even, then $s_3(i) = 1 \leq 3$, and if $i \equiv 1 \mod 4$, then $s_3(i) = \nu(3^2-1) + \nu(i+1) - 1 = 2+\nu(i+1) = 3 \leq 3$. 
\item If $i \equiv 3 \mod 4$, then $s_3(i) = \nu(3^2-1)+\nu(i+1)-1 = 2+\nu(i+1) > 3$.
\end{itemize}
\end{rem2}

This leads us to the $E_2$-term. There is no room for higher differentials, so $E_2=E_\infty$. All hidden extensions follow from comparison to the algebraic closure or $\tau^n x_q$-linearity. The resulting groups are depicted in \Cref{Fig:FqEooL}, and multiplicative generators and relations are listed in \cref{Table:FqLGens}.

\subsection{$\mathbb{Q}_q$ with $q$ odd}\label{SS:QqL}

As with $kq$ (\Cref{SS:Qqkq}), the computation of $\pi_{**}^{\q_q}(L)$ for $q$ odd follows from the analogous computation over $\f_q$ (\Cref{SS:FqL}). The additive isomorphism \eqref{Eqn:QqToFq} implies that
$$E_1^{\q_q}(L) \cong E_1^{\f_q}(L) \otimes \z[\pi]/(\pi^2).$$
Thus the $E_1$-term over $\q_q$ can be obtained from \Cref{Fig:FqE1L} by superimposing an identical copy shifted by $(-1,1)$. Moreover, the $d_1$-differentials occur in both copies; this follows as usual from base change to the algebraic closure and $\pi^\delta x_q^\epsilon \tau^n$-linearity for the appropriate choices of $n \geq 0$ and $\delta, \epsilon \in \{0,1\}$. 

There is no room for further differentials, so $E_2 = E_\infty$. By the above discussion, the $E_\infty$-term satisfies
$$E_\infty^{\q_q}(L) \cong E_\infty^{\f_q}(L) \otimes \z[\pi]/(\pi^2),$$
so it can be obtained from \Cref{Fig:FqEooL} by superimposing the same picture shifted by $(-1,1)$. 

\subsection{$\mathbb{Q}_2$}\label{SS:Q2L}

As in \S \ref{SS:FqL}, we calculate the kernels and cokernels of the reduced Adams operation. 

\begin{defin}\label{Def:KCQ2}
For all $k \geq 1$, let
\begin{equation*}
	\begin{aligned}
		K_2(k)\{x\} := &\bigoplus_{i \geq 0} \ker(\cdot 2^{\nu(k)+3} : \z/2^{s_3(i)}\{x v_1^{2k} \tau^{i}\} \to \z/2^{s_3(i)}\{xv_1^{2k} \tau^{i}\})
	\end{aligned}
\end{equation*}
\begin{equation*}
	\begin{aligned}
		C_2(k)\{x\} := &\bigoplus_{i \geq 0} \coker(\cdot 2^{\nu(k)+3} : \z/2^{s_3(i)}\{x v_1^{2k} \tau^i\} \to \z/2^{s_3(i)}\{x v_1^{2k}\tau^i\})\\
	\end{aligned}
\end{equation*}
be the sums of kernels and cokernels, respectively, for $x$ being either $z_i$ or $\rho^2$. 

Define 
$$K_2(k)=\begin{cases}
	\pi_{-2,*}^{\q_2}(H\z), & k=0\\
	K_2(k)\{\rho^2\}, & k>0
\end{cases}, \text{~~}
\underline K_2(k)=\begin{cases}
	\pi_{-1,*}^{\q_2}(H\z), & k=0\\
	K_2(k)\{z_i\}, & k>0
\end{cases}, \text{~and~}
C_2(k)=\begin{cases}
	\pi_{-2,*}^{\q_2}(H\z), & k=0\\
	C_2(k)\{\rho^2\}, & k>0
\end{cases}.
$$

We write $C_2(0)\{z_i\}=\pi_{-2,*}^{\q_2}(H\z)$ and define $\underline C_2(k), k\geq 0$ to be the sum
$$\underline C_2(k) := C_2(k)\{z_i\}\oplus \bigoplus_{i\geq 0} \z/2^{\nu(k)+3}\{y_iv_1^{2k}\tau^i\}\oplus \z/2^{\nu(k)+3}\{v_1^{2k}u\}.$$

Let $\tilde{K}_2(k)$ denote the groups obtained from $K_2(k)$ by replacing the $(4i-2)$-th summand with two times the summand, for all positive integer $i$; let $\tilde{\underline K}_2(k)$ denote the groups obtained from $K_2(k)$ by replacing each summand with two times the summand, except the $(4i-1)$-th summand for all positive integer $i$.

Let $\tilde{C}_2(k)$ denote the groups obtained from $C_2(k)$ by replacing the $(4i-2)$-th and $(4i-1)$-th summands with two times the summands, for all positive integer $i$; let $\tilde{\underline C}_2(k)$ denote the groups obtained from $\underline C_2(k)$ by replacing each summand with two times the summand.
\end{defin}

In the notation introduced in Definition \ref{Def:KCQ2}, we use $K$ for things from the kernel of the reduced Adams operation, and $C$ for things from the cokernel.
We use the underlined symbols to denote the groups coming from $\pi_{-1,*}^{\q_2}H\z$ and the no-underlined symbols to denote the groups coming from $\pi_{-2,*}^{\q_2}H\z$. The tilde is for the resulting groups after computing the differentials.
We use \Cref{Def:KCQ2} to describe the $E_1$-term and $E_\infty$-term of the ESSS for $L$. The $E_1$-term is depicted in \Cref{Fig:Q2E1L}.

In \Cref{Fig:Q2E1L}, the red differentials are given by taking $Sq^2$, the brown differentials are given by 
multiplying by $\tau$, and the blue differentials are $Sq^2+\rho Sq^1$. The calculation is similar to the case for $kq$ in \S \ref{SS:Q2kq}. The $E_2$-page is depicted in \Cref{Fig:Q2E2L}.

\begin{exm}\label{exm:Q2L}
We explain the $d_1$ calculation using sample computations. We suggest readers compare the following examples with Figure \ref{Fig:Q2E1L} and Figure \ref{Fig:Q2E2L}.
\begin{enumerate}
\item The differentials between the kernels (black symbols) from $(2,2)$ to $(1,5)$ is similar to Example \ref{exm:Q2kq}(2). The source contains $$K_2(1) = \bigoplus_{i \geq 0} \ker(\cdot 2^3 : \z/2^{s_3(i)}\{\rho^2 v_1^{2} \tau^{i}\} \to \z/2^{s_3(i)}\{\rho^2v_1^{2} \tau^{i}\}).$$
When $i=4k+3$, $s_3(i)>3$, and thus the generator of the $(4k+3)$-th summand is of the form $2^t\rho^2 v_1^{2} \tau^{4k+3}$ for some positive integer $t$. Therefore, the differential from this generator is zero, and $\tau^{4k}\rho^2 h_1^3$ is not hit.

After computing this differential, what remains in the target is $z/2[\tau^4]\{\rho^2 h_1^3\}, $ and what remains in the source after a base change is $\z/2\{1,\tau\}[\tau^4]\oplus \tilde{K}_2(1)$.

\item We explain the differential between the cokernels (darkgreen symbols) from $(2,2)$ to $(1,5)$. The source is 
$$\underline{C_2}(1)= \bigoplus_{i \geq 0} \coker(\z/2^{s_3(i)} \xrightarrow{\cdot 2^{3} } \z/2^{s_3(i)})\{z_i v_1^{2}\tau^i\}\oplus \bigoplus_{i\geq 0} \z/2^{3}\{y_iv_1^{2}\tau^i\}\oplus \z/2^{3}\{uv_1^{2}\}$$
 and the target is $\z/2[\tau]\{u,\pi,\rho\}$. View $\underline{C_2}(1)$ as a $\z$-module, the generators are of the form 
 $\rho v_1^2 \tau^{2n} $, $\pi v_1^2 \tau^{2n}  $ , $\pi v_1^2 \tau^{2n+1} $, $u v_1^2\tau^{2n+1} $  and $u v_1^2$ for $n\geq 0, $ so what remains in the target are $\z/2\{\rho v_1^2,\pi v_1^2,u v_1^2\}\oplus \z/2\{\rho v_1^2\tau^{2n}, n\geq 0\}\oplus \z/2\{u v_1^2\tau^{2n+1},  n\geq 0\}$.
\end{enumerate}
	
\end{exm}

Every potential higher differential hits an $\eta$-periodic class, or is from the unit $1$. 
By comparing with the $\eta$-inverted result  
in \cite[Cor. 11]{Wil18}, there is no possibility for nontrivial higher differentials. Therefore, over $\q_2$, the ESSS for $L$ collapses at $E_2$.

\subsection{$\mathbb{R}$}\label{SS:RL}

The ESSS for $L$ over $\r$ was computed in \cite[Sec. 5]{BIK22}. The multiplicative generators of the $E_1$-term appear in \cite[Table 8]{BIK22} and the $E_1$-term is depicted in \cite[Fig. 10]{BIK22}. The values of the $d_1$-differentials on the multiplicative generators appear in \cite[Table 10]{BIK22}, and these values determine all of the $d_1$-differentials using the Leibniz rule in conjunction with the relations recorded in \cite[Table 9]{BIK22}. The higher differentials are described in \cite[Prop. 5.11]{BIK22}. The resulting $E_\infty$-page is depicted in \cite[Figs. 13-19]{BIK22}.

\subsection{$\mathbb{Q}$}\label{SS:QL}

To compute $E_1^{\q}(L)$, we break it into four manageable pieces. First, there is a decomposition
$$E_1^{\q}(L) \cong K^{\q}(L) \oplus \Sigma^{-1} C^{\q}(L),$$
where $K^{\q}(L)$ is the kernel of $\psi^3-1$ and $C^{\q}(L)$ is the cokernel of $\psi^3-1$. There are further decompositions
$$K^{\q}(L) \cong K^+(L) \oplus K^-(L),$$
$$C^{\q}(L) \cong C^+(L) \oplus C^-(L),$$
where 
$$K^+(L) := \ker(\psi^3-1 : E_1^+(kq) \to E_1^+(kq)),$$
$$K^-(L) := \ker(\psi^3-1 : E_1^-(kq) \to E_1^-(kq)),$$
$$C^+(L) := \coker(\psi^3-1:E_1^+(kq) \to E_1^+(kq)),$$
$$C^-(L) := \coker(\psi^3-1:E_1^-(kq) \to E_1^-(kq)).$$
Here, $E_1^+(kq)$ and $E_1^-(kq)$ were defined in \Cref{SS:Qkq}. We then have
$$E_1^{\q}(L) \cong K^+(L) \oplus K^-(L) \oplus \Sigma^{-1} C^+(L) \oplus \Sigma^{-1} C^-(L).$$

Since $E_1^+(kq)$ can be expressed entirely in terms of $E_1^{\f_q}(kq)$, $q$ odd, the groups $K^+(L)$ and $C^+(L)$ can be described using the computations from \Cref{SS:FqL}. More precisely, $K^+(L) \oplus \Sigma^{-1} C^+(L)$ is obtained graphically by taking the sum over all odd primes of \Cref{Fig:FqE1L}, then shifting the entire picture by $(-1,1)$. 

The computation of $K^-(L)$ and $C^-(L)$ also largely follows from previous computations. Since $E_1^{\r}(kq)$ is a summand of $E_1^-(kq)$, the kernel and cokernel of $\psi^3-1$ over $\r$ are summands in $K^-(L) \oplus \Sigma^{-1} C^-(L)$; the relevant groups are depicted graphically in \cite[Fig. 10]{BIK22}. The complementary summand of $E_1^-(kq)$ is obtained by replacing $\pi_{**}(H\z)$ by $C'''(0)$ and $\pi_{**}(H\z/2)$ by $\z/2[\tau]\{[2]\}$. Since $\psi^3-1$ is necessarily trivial on $\z/2[\tau]\{[2]\}$, we only need to analyze its effect on each copy of $C'''(0)$, but this was already done in \Cref{SS:Q2L} since $C'''(0)$ may be identified with the summand in $\pi_{**}^{\q_2}(H\z)$ generated by classes of the form $\pi \tau^m$, $m \geq 0$, or equivalently, by the classes of the form $y_m \tau^m$ for $m$ even and $z_m \tau^m$ for $m$ odd.  Note that any nontrivial $d_1$-differential over $\q_2$ involving $\pi \tau^m$ must include it in both the source and target, so all such $d_1$-differentials lift to $\q$.

The preceding discussion implies that the map
$$E_1^{\q}(L) \to \prod_{\nu} E_1^{\q_\nu}(L)$$
is injective. The $d_1$-differentials on $E_1^+(L)$ are precisely the lifts of the $d_1$-differentials from \Cref{SS:FqL} (which occur on the $\pi$-divisible part of the ESSS over $\q_q$, $q$ odd). The $d_1$-differentials on $E_1^-(L)$ are the lifts of the $d_1$-differentials over $\r$ (\cite[Table 10]{BIK22}) and the lifts of the $d_1$-differentials over $\q_2$ for which the source (equivalently, the target) lift to $\q$. 

Since all of the $d_1$-differentials over $\q$ are lifted from $d_1$-differentials over the local places, the map
$$E_2^{\q}(L) \to \prod_{\nu} E_2^{\q_\nu}(L)$$
is also injective. There cannot be any higher differentials involving $E_2^+(L)$, since these would imply higher differentials in $E_2^{\q_q}(L)$ for $q$ odd. On $E_2^-(L)$, or more generally on $E_r^-(L)$ for $r \geq 2$, the nontrivial differentials are precisely the lifts of the nontrivial $d_r$-differentials in $E_r^{\r}(L)$ described in \cite[Prop. 5.11]{BIK22}. 

The $E_\infty$-term can be described as follows. We have 
$$E_\infty^{\q}(L) \cong E_\infty^+(L) \oplus E_\infty^-(L).$$
The piece $E_\infty^+(L)$ is the sum over all odd primes $q$ of $E_\infty^{\f_q}(L)$ (see \Cref{Fig:FqEooL}), shifted by $(-1,1)$. The piece $E_\infty^-(L)$ is the sum of $E_\infty^{\r}(L)$ (\cite[Figs. 13-19]{BIK22}) and the portion of $E_\infty^{\q_2}(L)$ (\Cref{Fig:Q2E2L}) generated by $\pi \tau^m$ and $\iota \pi \tau^m$, $m \geq 0$. All hidden extensions follow from comparison with the local places.  

\begin{rem2}
Since the Hasse map for $L$ is injective on the $E_1$-term of the ESSS and every differential over $\q$ is lifted from a differential over some $\q_\nu$, the Hasse map
$$\pi_{**}^{\q}(L) \to \prod_{\nu} \pi_{**}^{\q_\nu}(L)$$
is injective, i.e., $L$ satisfies the motivic Hasse principle of \cite[Sec. 4]{OO13}. 
\end{rem2}

\clearpage 

\appendix

\section{Figures}\label{Sec:Figures}

\begin{figure}[!htb]
\centering
\includegraphics[trim=0 30 0 20, clip, scale=0.7]{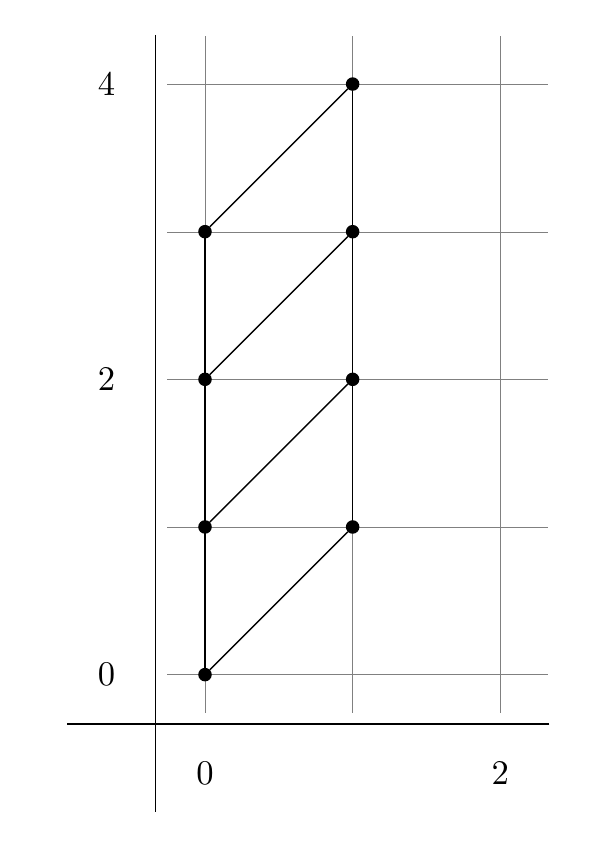}
\includegraphics[trim=0 30 0 20, clip, scale=0.7]{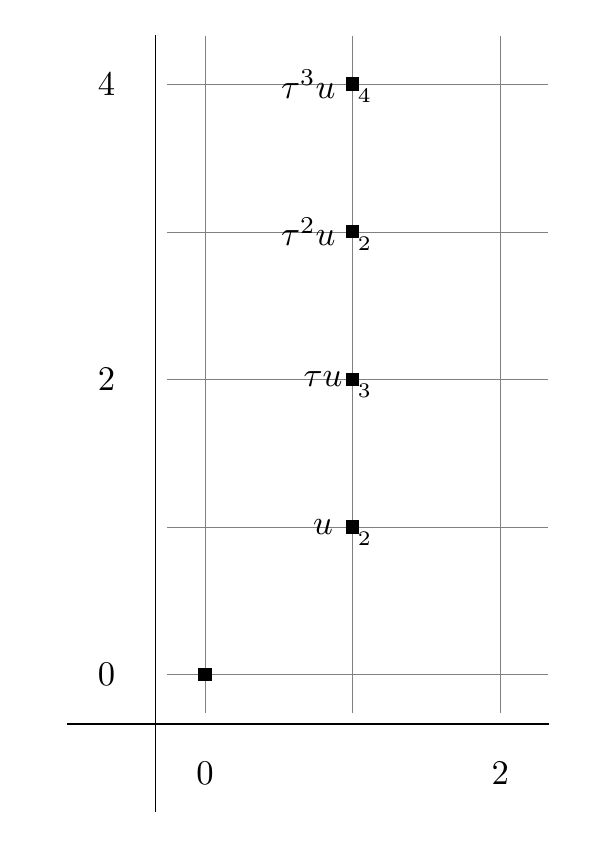}
\includegraphics[trim=0 30 0 10, clip, scale=0.7]{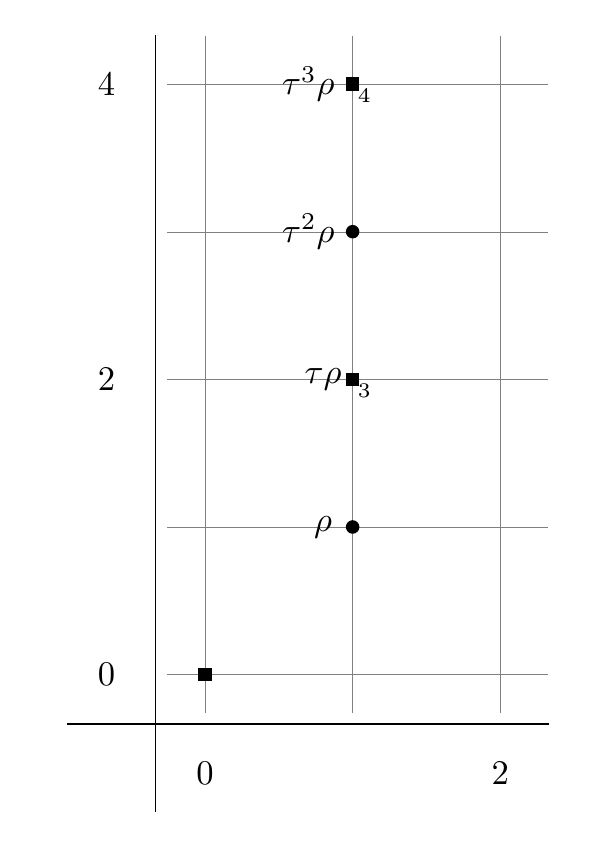}
\includegraphics[trim=0 30 0 10, clip, scale=0.7]{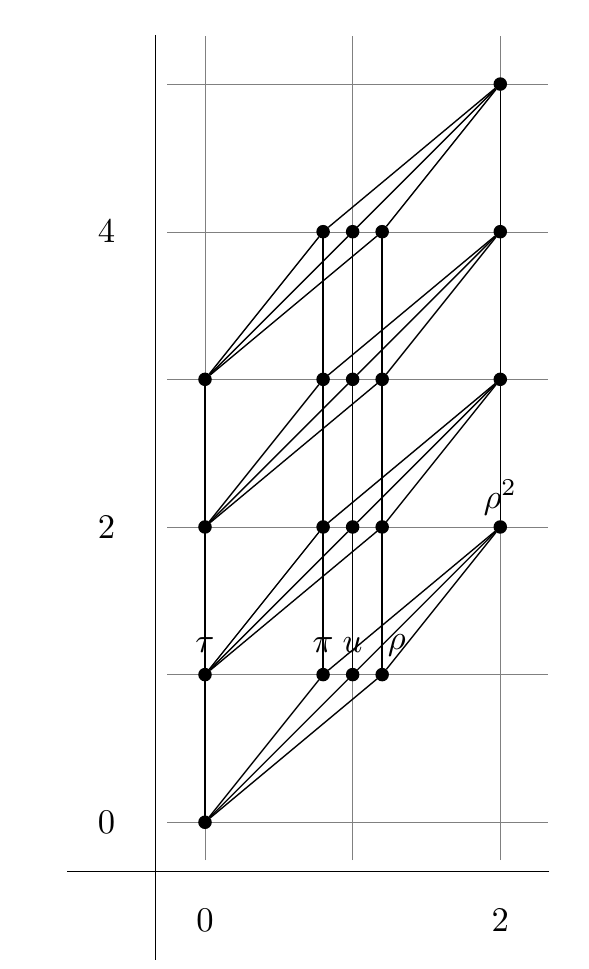}
\includegraphics[trim=0 30 0 10, clip, scale=0.7]{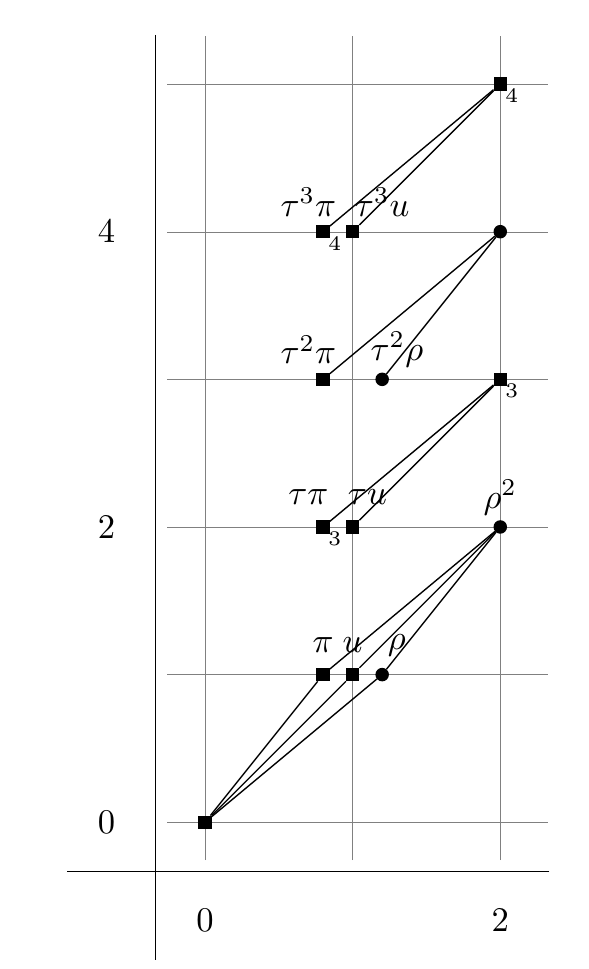}
\includegraphics[trim=0 30 0 10, clip, scale=0.7]{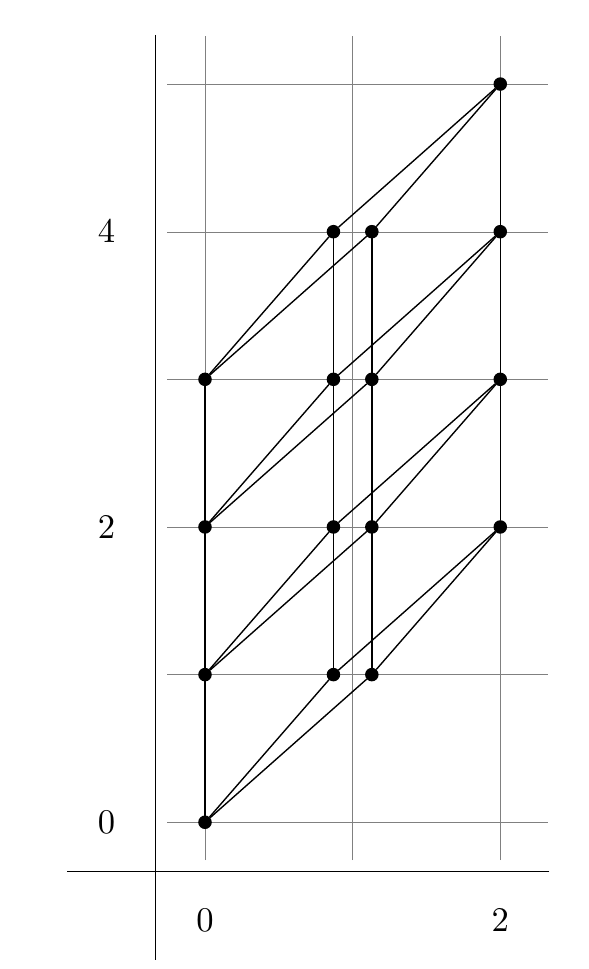}
\includegraphics[scale=0.7]{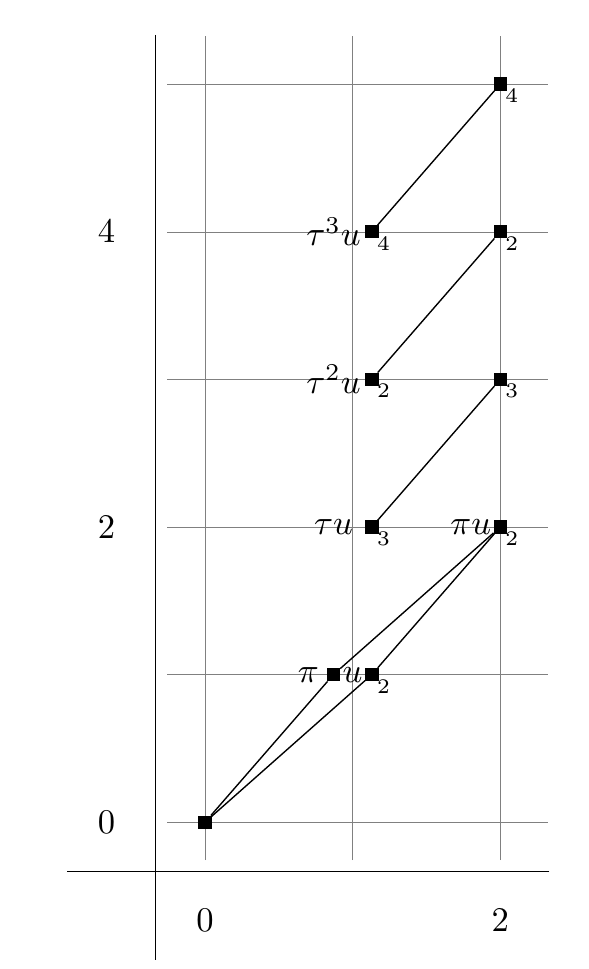}
\includegraphics[scale=0.7]{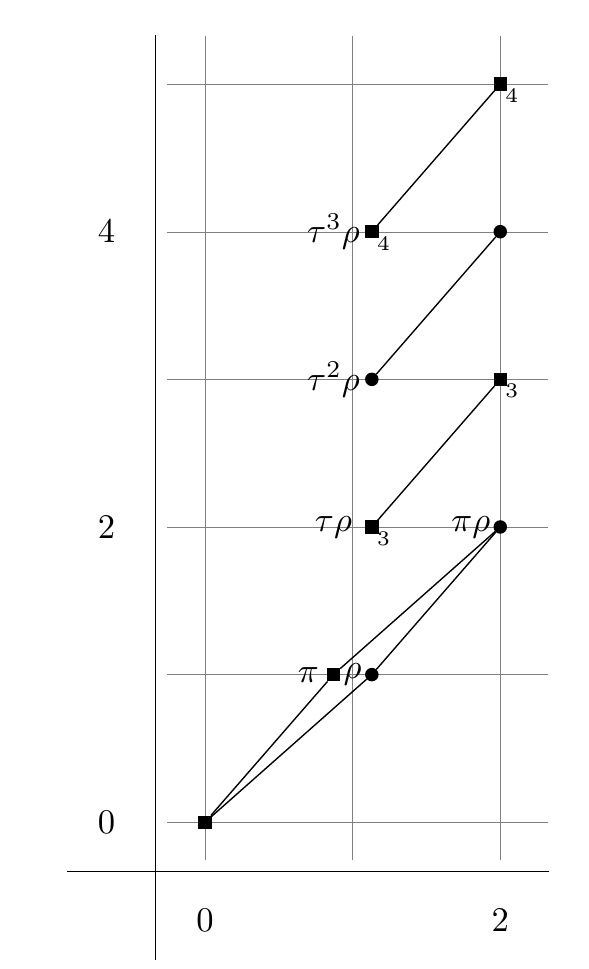}
\caption{A graphical depiction of $\pi_{**}^{\f_q}(H\z/2)$, $\pi_{**}^{\f_5}(H\z)$, $\pi_{**}^{\f_3}(H\z)$, $\pi_{**}^{\q_2}(H\z/2)$, $\pi_{**}^{\q_2}(H\z)$, $\pi_{**}^{\q_q}(H\z/2)$, $\pi_{**}^{\q_5}(H\z)$, and $\pi_{**}^{\q_3}(H\z)$. A bullet $\bullet$ represents $\z/2$, a black square $\blacksquare$ represents $\z$, a black square with a number {$\blacksquare_n$} represents $\z/{2^n}$. The $x$ axis is $-s$ and the $y$ axis is $-w.$ }\label{Fig:EM_Qq}
\end{figure}

\begin{figure}[!htb]
\centering
\includegraphics[scale=1.25]{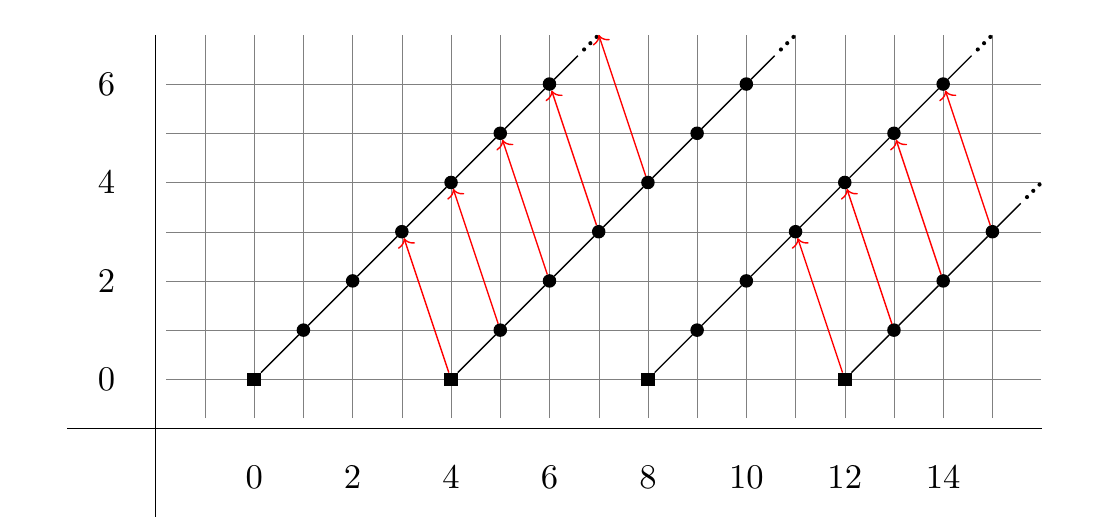}
\caption{The $E_1$-page of the ESSS for $kq$ over an algebraically closed field. A bullet $\bullet$ represents $\z/2[\tau]$ and a square $\blacksquare$ represents $\z[\tau]$. This figure appears with generators labeled as \cite[Fig. 1]{BIK22}.}\label{Fig:CE1kq}
\end{figure}

\begin{figure}[!htb]
\centering
\includegraphics[scale=1.25]{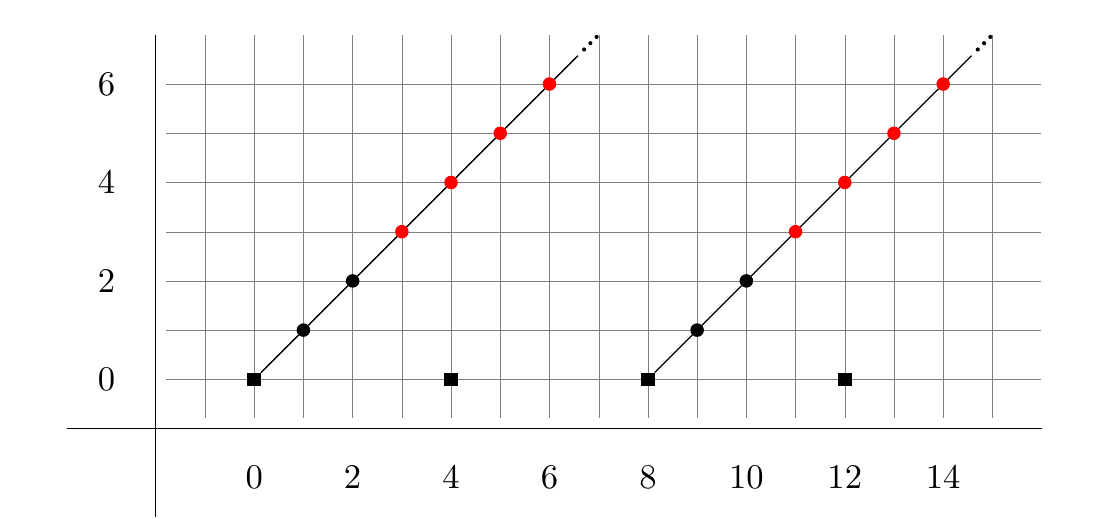}
\caption{The $E_\infty$-page of the ESSS for $kq$ over an algebraically closed field. A black bullet $\bullet$ represents $\z/2[\tau]$, a red bullet \textcolor{red}{$\bullet$} represents $\z/2$, and a square $\blacksquare$ represents $\z[\tau]$. This figure appears with generators labeled as \cite[Fig. 2]{BIK22}.}\label{Fig:CEookq}
\end{figure}

\begin{figure}[!htb]
\centering
\includegraphics[scale=1.25]{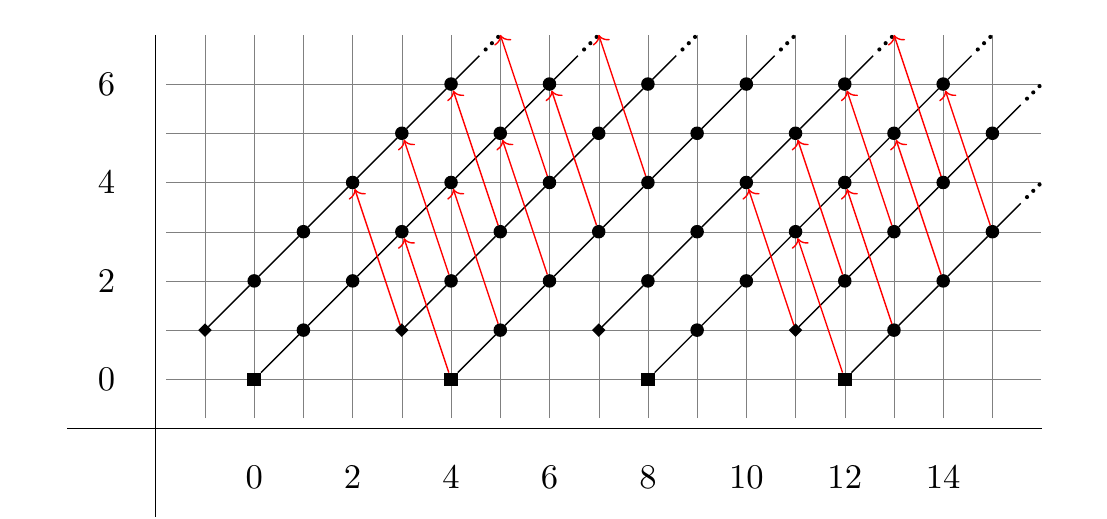}
\caption{The $E_1$-page of the ESSS for $kq$ over $\f_q$. A bullet $\bullet$ represents $\z/2[\tau]$, a square $\blacksquare$ represents $\z$, and a diamond $\blackdiamond$ represents the $u$-divisible (if $q \equiv 1 \mod 4$) or $\rho$-divisible (if $q \equiv 3 \mod 4$) part of $\pi_{**}^{\f_q}(H\z)$.}\label{Fig:FqE1kq}
\end{figure}

\begin{figure}[!htb]
\centering
\includegraphics[scale=1.25]{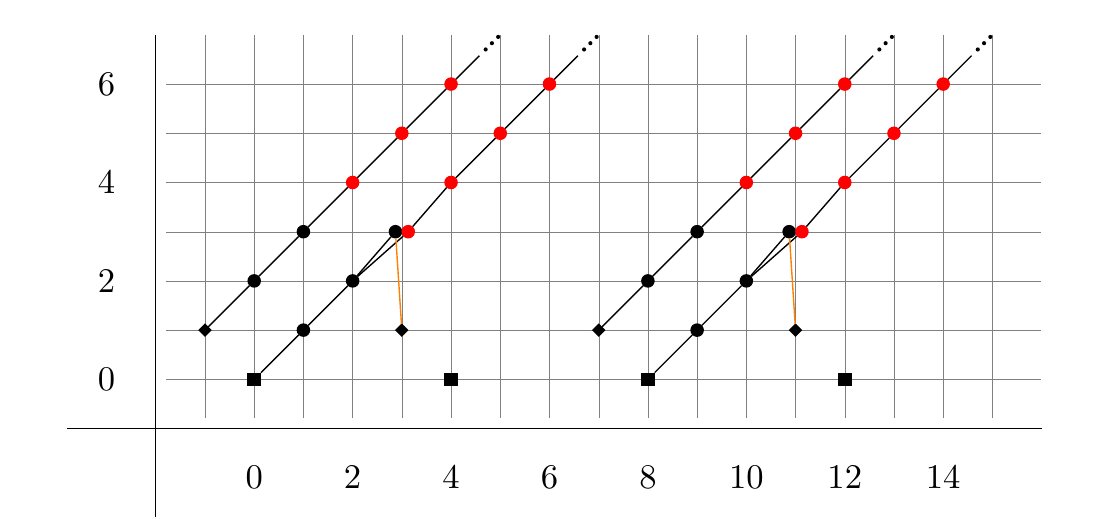}
\caption{The $E_\infty$-page of the ESSS for $kq$ over $\f_q$. A black bullet $\bullet$ represents $\z/2[\tau]$, a red bullet \textcolor{red}{$\bullet$} represents $\z/2$, a square $\blacksquare$ represents $\z$, and a diamond $\blackdiamond$ represents $\bigoplus_{i \geq 0} \z/2^{\nu(q-1)+\nu(i+1)-1}\{2 \tau^i\}$. The orange lines indicate hidden $\mathsf h$-extensions.}\label{Fig:FqEookq}
\end{figure}

\begin{figure}[!htb]
\centering
\includegraphics[scale=.75]{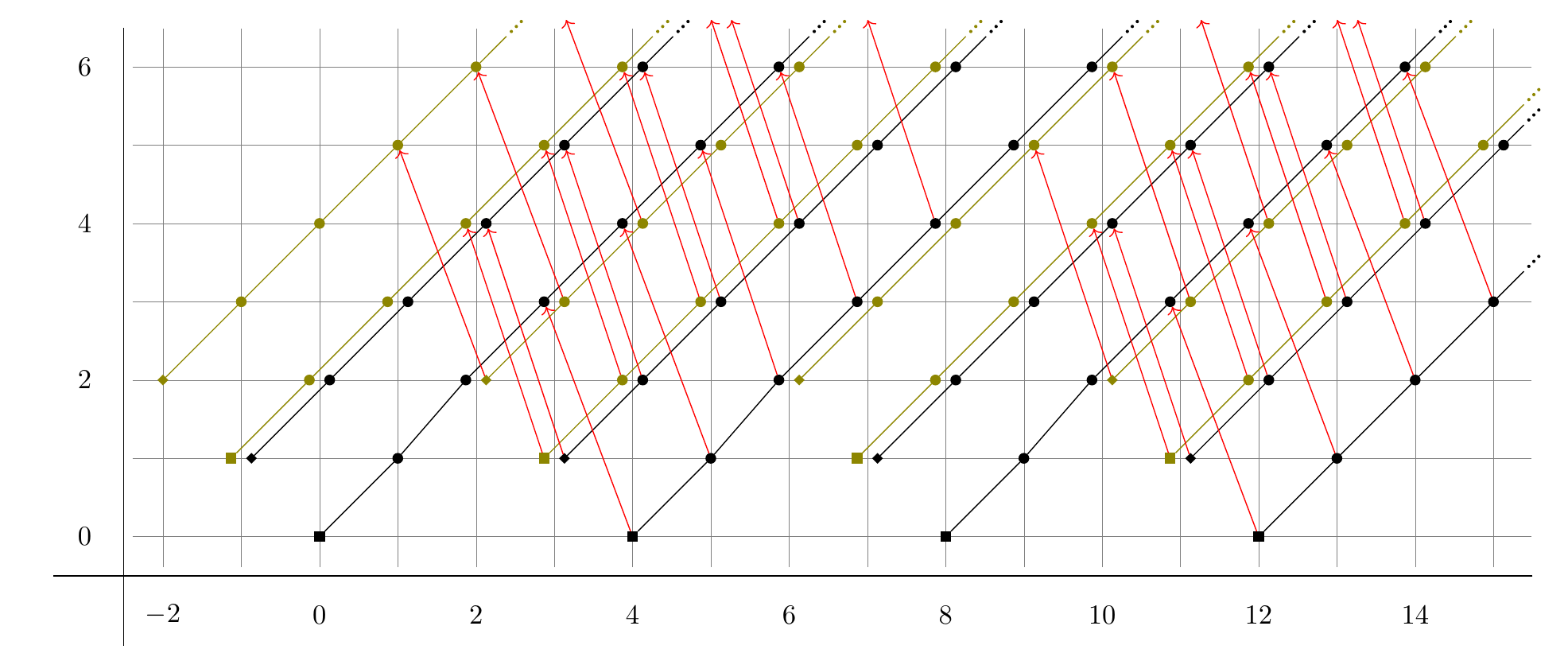}
\caption{The $E_1$-page of the ESSS for $kq$ over $\q_q$. A bullet $\bullet$ represents $\z/2[\tau]$, a square $\blacksquare$ represents $\z$, and a diamond $\blackdiamond$ represents the $u$-divisible (if $q \equiv 1 \mod 4$) or $\rho$-divisible (if $q \equiv 3 \mod 4$) part of $\pi_{**}^{\q_q}(H\z)$. Classes in olive are $\pi$-divisible.}\label{Fig:QqE1kq}
\end{figure}

\begin{figure}[!htb]
\centering
\includegraphics[scale=.75]{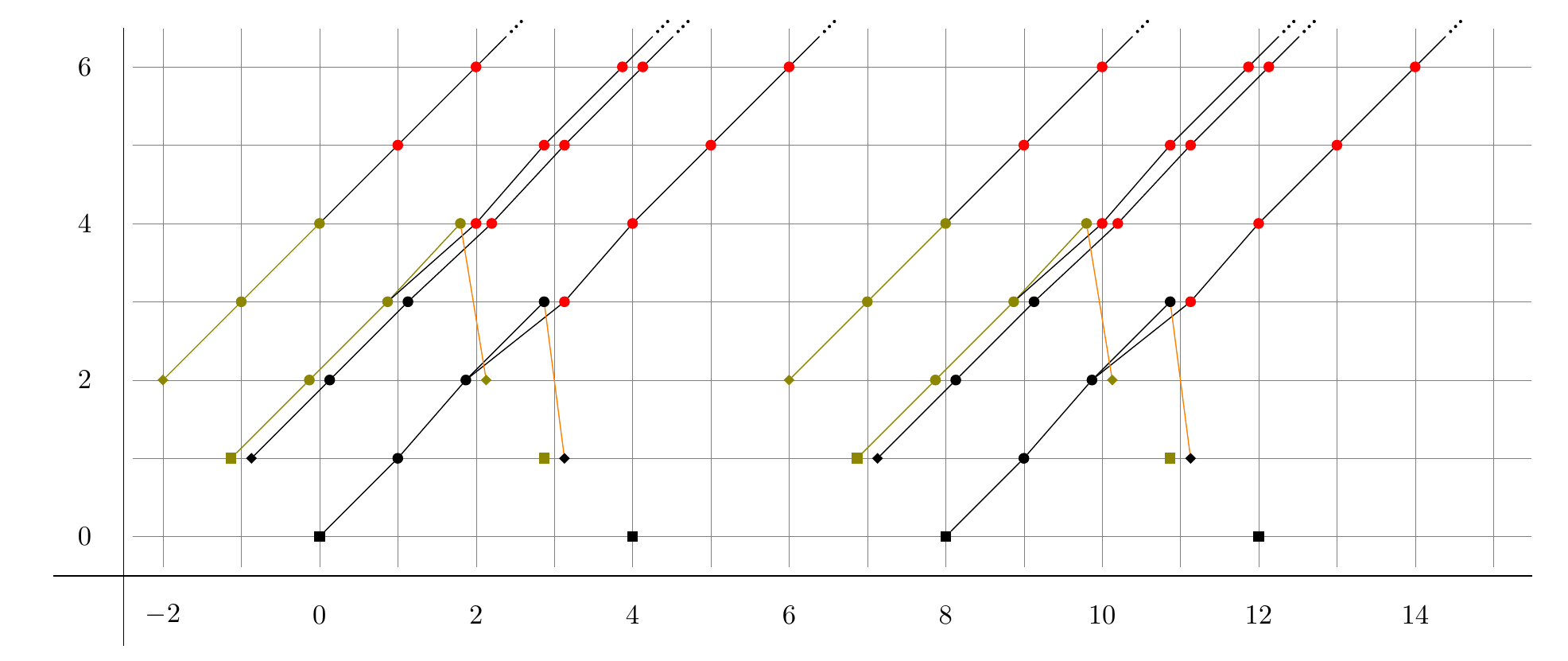}
\caption{The $E_\infty$-page of the ESSS for $kq$ over $\q_q$. A black bullet $\bullet$ represents $\z/2[\tau]$, a red bullet \textcolor{red}{$\bullet$} represents $\z/2$, a square $\blacksquare$ represents $\z$, and a diamond $\blackdiamond$ represents the group obtained from the $u$-divisible (if $q \equiv 1 \mod 4$) or $\rho$-divisible (if $q \equiv 3 \mod 4$) part of $\pi_{**}^{\f_q}(H\z)$ by dividing the order by two. Classes in olive are $\pi$-divisible, and orange lines indicate hidden $\mathsf h$-extensions.}\label{Fig:QqEookq}
\end{figure}

\begin{figure}[!htb]
\centering
\includegraphics[scale=.75]{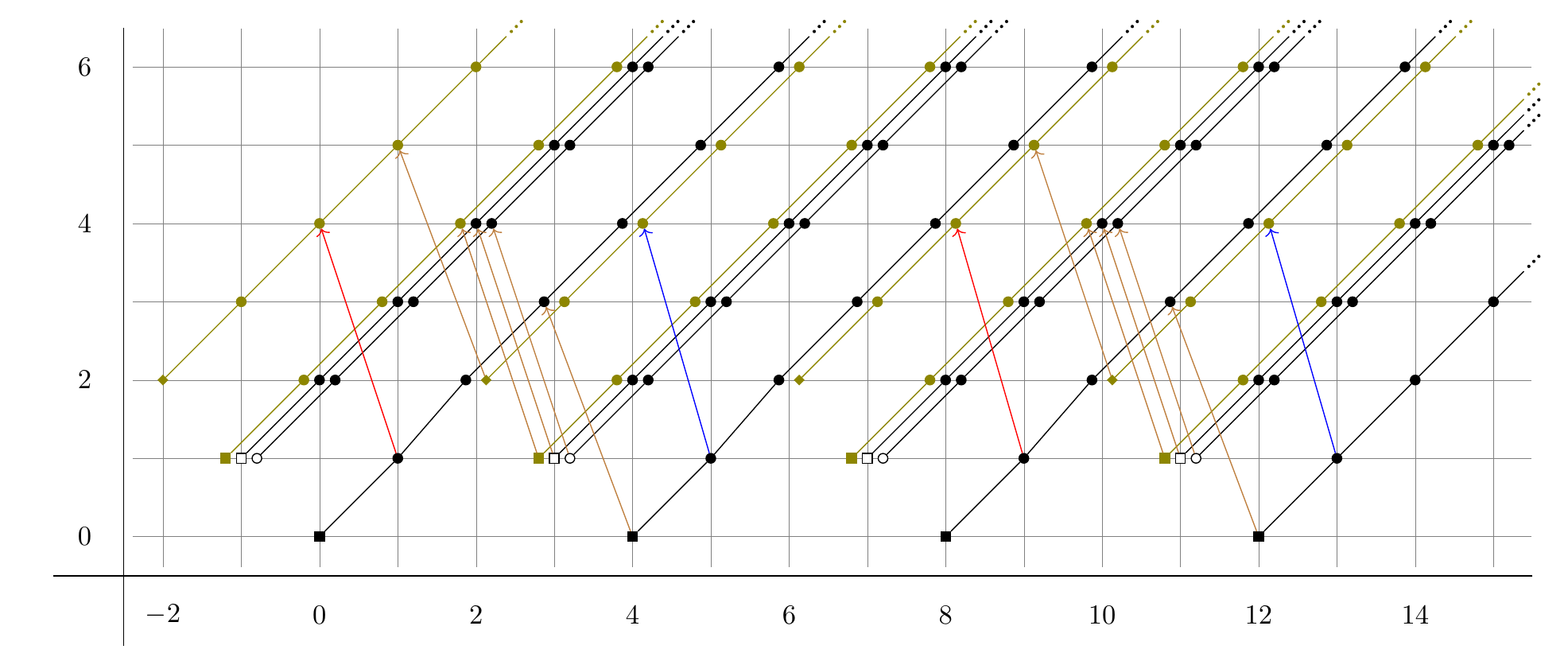}
\caption{The $E_1$-page of the ESSS for $kq$ over $\q_2$. A bullet $\bullet$ represents $\z/2[\tau]$, a circle $\circ$ represents $\z/2[\tau^2]\{\rho\}$, a black square $\blacksquare$ represents $\z$, a rectangle $\square$ represents $\z[\tau^2]\{\tau u\}\oplus \z\{u\}$, a diamond ${\color{olive} \blackdiamond}$ represents $\pi_{-2,*}^{\q_2}(H\z)$, and an olive square ${\color{olive} \blacksquare}$ represents the $\pi$-divisible part of $\pi_{-1,*}^{\q_2}(H\z)$. Classes in olive are $\pi$-divisible. The differentials are $h_1$-periodic; we only draw the first occurrences.}\label{Fig:Q2E1kq}
\end{figure}

\begin{figure}[!htb]
\centering
\includegraphics[scale=0.75]{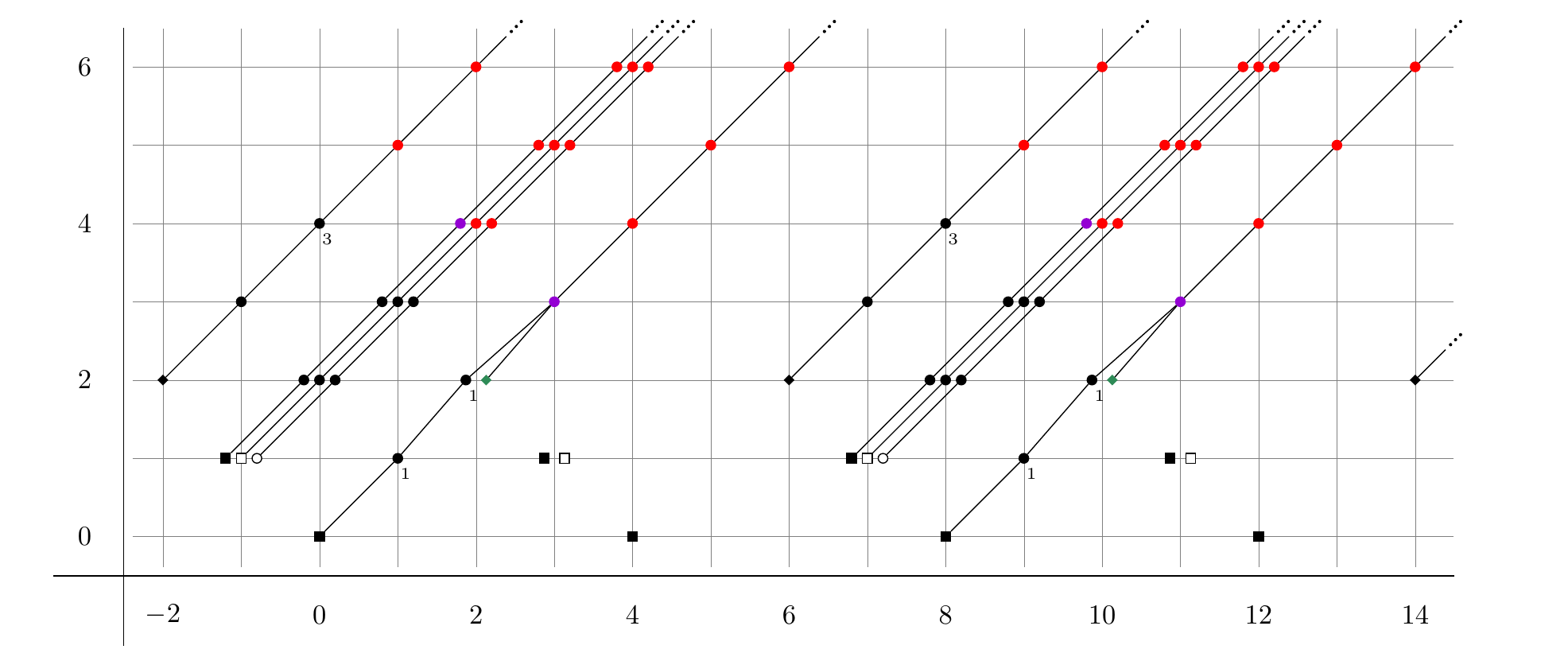}
\caption{The $E_\infty$-page of the ESSS for $kq$ over $\q_2$. 
A bullet $\bullet$ represents $\z/2[\tau]$, a red bullet \textcolor{red}{$\bullet$} represents $\z/2$, a bullet with subscript $\bullet_n$ represents $\z/2[\tau^4]\{1,\tau^n\}$, 
a lavender bullet \hana{$\bullet$} represents $\z/2[\tau]\{\tau^2\}\oplus \z/2$,
a circle $\circ$ represents $\z/2[\tau^2]\{\rho\}$,
a square $\blacksquare$ represents $\z$, a rectangle $\square$ represents $\z[\tau^2]\{\tau u\}\oplus \z\{u\}$, 
a diamond ${\blackdiamond}$ represents $\pi_{-2,*}^{\q_2}(H\z)$,
and a dark green diamond ${\color{seaolive} \blackdiamond}$ represents replacing each group of $\pi_{-2,*}^{\q_2}(H\z)$ in degree $(-2,-4k)$ and $(-2,-1-4k)$ with twice of it for all positive integers $k$.
}\label{Fig:Q2E2kq}  
\end{figure}

\begin{figure}[!htb ]
\centering
\includegraphics{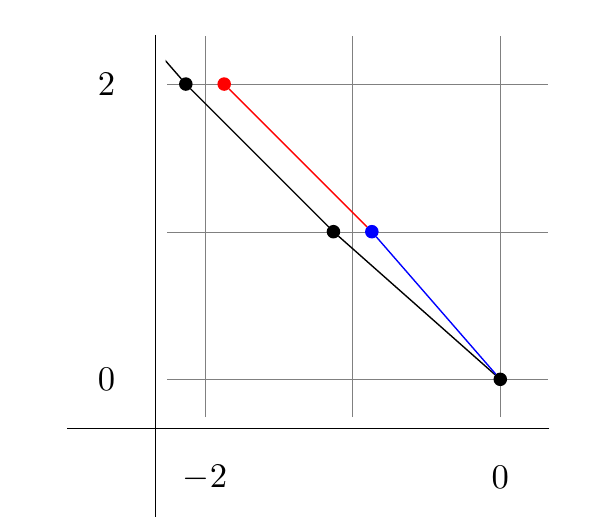}
\includegraphics{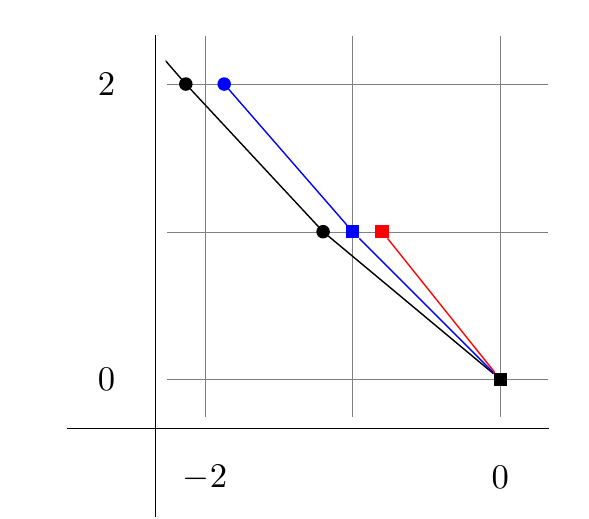}
\caption{A graphical depiction of $\pi_{**}^{\q}(H\z/2)$ (left) and $\pi_{**}^{\q}(H\z)$ (right). In the left-hand picture, a black bullet $\bullet$ represents $\z/2[\tau]$, a blue bullet \textcolor{blue}{$\bullet$} represents $\bigoplus_{q \text{ prime}} \z/2[\tau]\{[q]\}$, and a red bullet \textcolor{red}{$\bullet$} represents $\bigoplus_{q \text{ odd}} \z/2[\tau]\{a_q\}$. In the right-hand picture, a black square $\blacksquare$ represents $\z$, a black bullet $\bullet$ represents $\z/2[\tau^2]$, a blue square \textcolor{blue}{$\blacksquare$} represents $\bigoplus_{q \text{ odd}} \z\{[q]\}$, a red square \textcolor{red}{$\blacksquare$} represents $C'''(0)$, and a blue bullet \textcolor{blue}{$\bullet$} represents $\bigoplus_{q \text{ odd}} \bigoplus_{i \geq 0} \z/2^{s_q(i)}\{[q]x_q \tau^i\}$.}\label{Fig:QF2Z}
\end{figure}

\begin{figure}[!htb]
\centering
\includegraphics[scale=1.25]{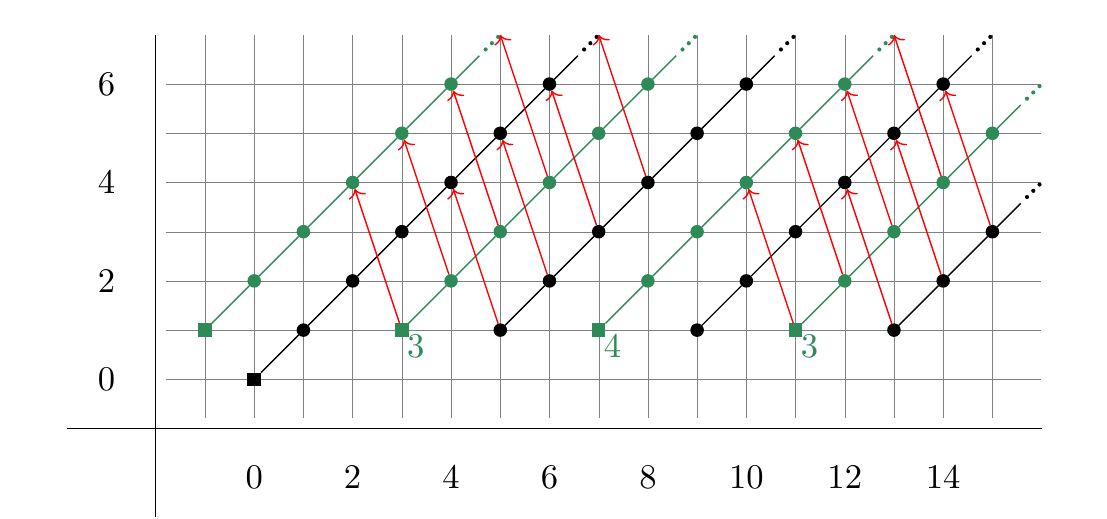}
\caption{The $E_1$-page of the ESSS for $L$ over algebraically closed fields. A bullet $\bullet$ represents $\z/2[\tau]$, a square $\blacksquare$ represents $\z[\tau]$, and a square with the positive integer $n$ as its right subscript $\blacksquare_n$ represents $\z/2^n[\tau]$. Elements of the form $\iota x$ appear in dark green. This figure appears with generators labeled as \cite[Fig. 3]{BIK22}.}\label{Fig:CE1L}
\end{figure}

\begin{figure}[!htb]
\centering
\includegraphics[scale=1.25]{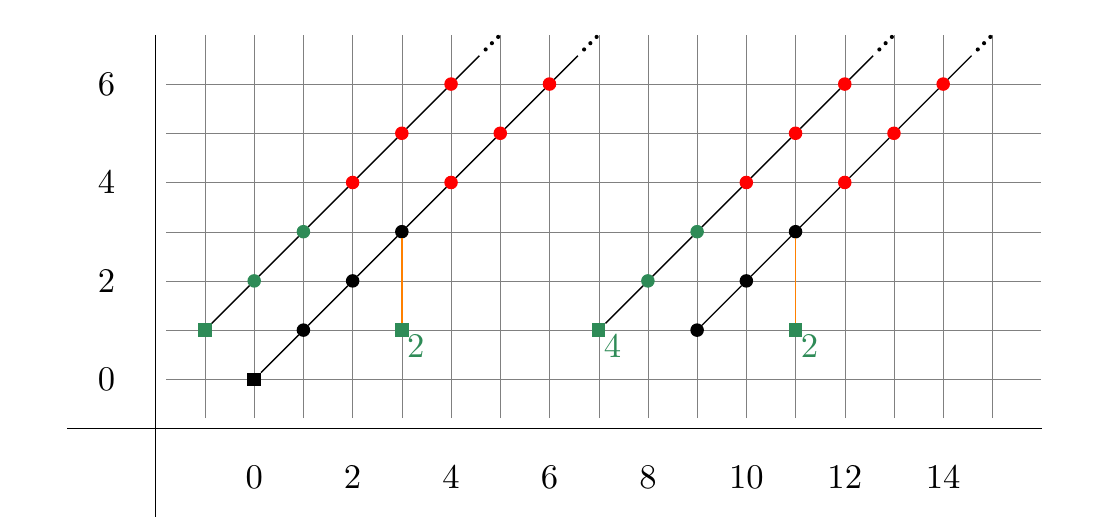}
\caption{The $E_\infty$-page of the ESSS for $L$ over algebraically closed fields. A bullet $\bullet$ represents $\z/2[\tau]$,  a red bullet \textcolor{red}{$\bullet$} represents $\z/2$, a square $\blacksquare$ represents $\z[\tau]$, and a square with the positive integer $n$ as its right subscript $\blacksquare_n$ represents $\z/2^n[\tau]$. Hidden extensions are depicted with orange lines. This figure appears with generators labeled as \cite[Fig. 3]{BIK22}.}\label{Fig:CEooL}
\end{figure}

\begin{figure}[!htb]
\centering
\includegraphics[scale=.75]{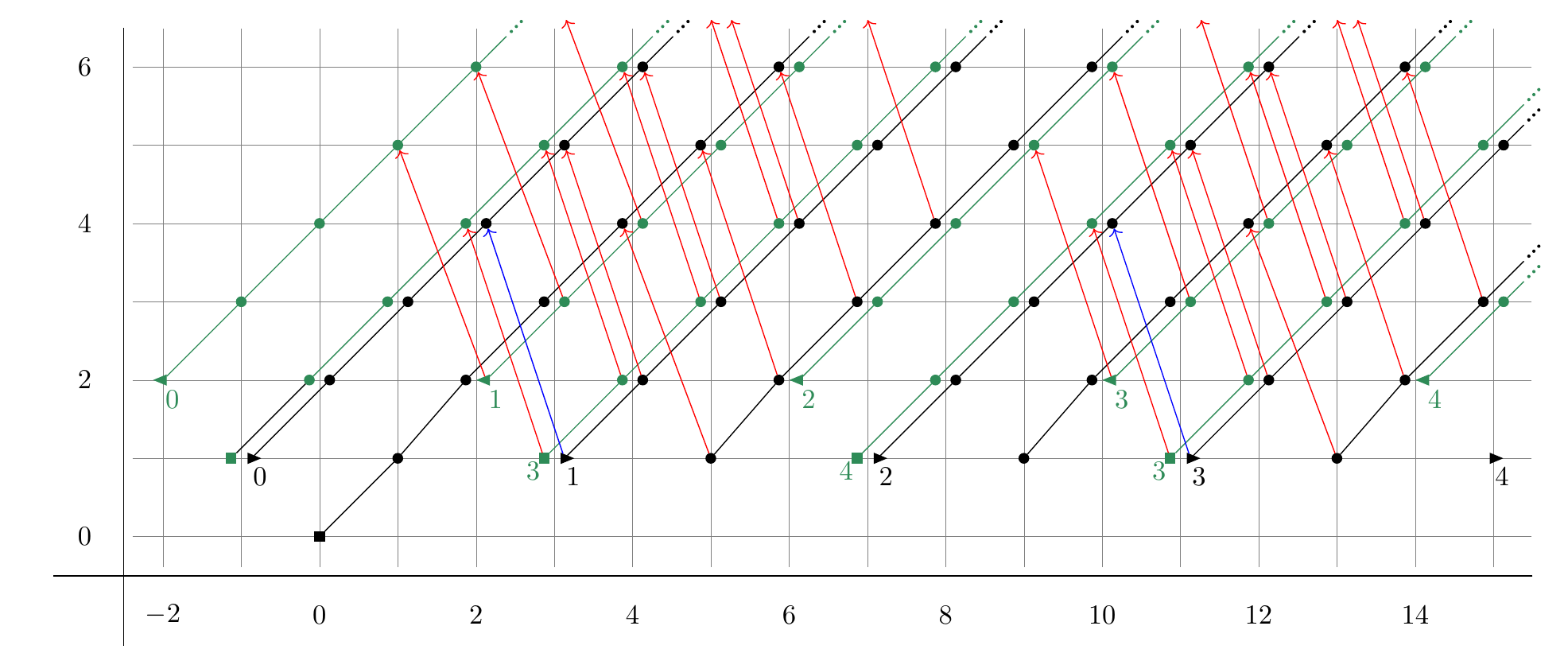}
\caption{The $E_1$-page of the ESSS for $L$ over $\f_q$. A bullet $\bullet$ represents $\z/2[\tau]$, a square $\blacksquare$ represents $\z$, a square with the positive integer $n$ as its left subscript ${}_n\blacksquare$ represents $\z/2^n$, an isosceles triangle with apex pointing right and nonnegative integer $n$ as its right subscript $\blacktriangleright_n$ represents $K_q(n)$, and an isosceles triangle with apex pointing left and nonnegative integer $n$ as its right subscript $\blacktriangleleft_n$ represents $C_q(n)$. Elements of the form $\iota x$ appear in dark green.}\label{Fig:FqE1L}
\end{figure}

\begin{figure}[!htb]
\centering
\includegraphics[scale=.75]{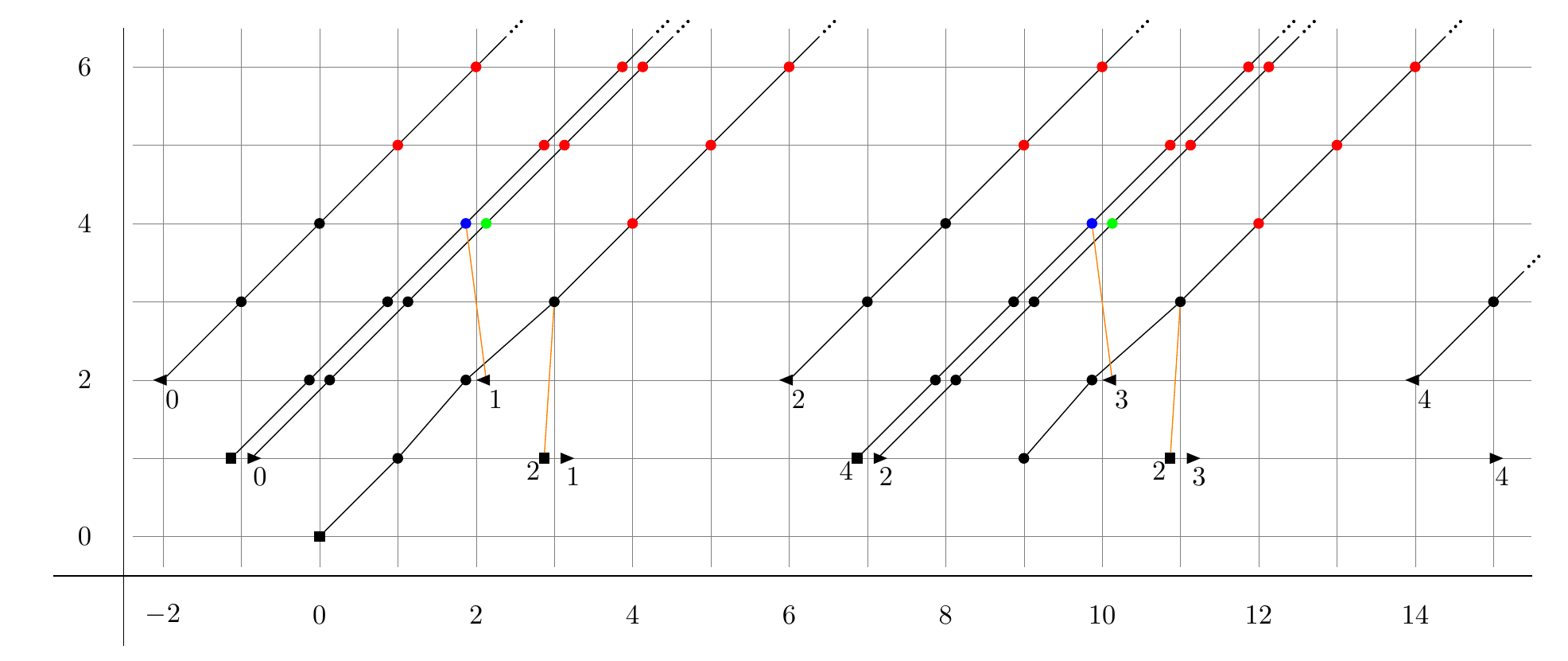}
\caption{The $E_\infty$-page of the ESSS for $L$ over $\f_q$. A bullet $\bullet$ represents $\z/2[\tau]$, a red bullet \textcolor{red}{$\bullet$} represents $\z/2$, a blue bullet \textcolor{blue}{$\bullet$} represents $\z/2\{1\} \oplus \z/2[\tau]\{\tau^2\}$, a green bullet \textcolor{green}{$\bullet$} represents the cokernel of the corresponding green $d_1$-differential, a square $\square$ represents $\z$, a square with the positive integer $n$ as its left subscript represents $\z/2^n$, an isosceles triangle with apex pointing right and nonnegative integer $n$ as its right subscript $\blacktriangleright_n$ represents $\tilde{K}_q(n)$, and an isosceles triangle with apex pointing left and nonnegative integer $n$ as its right subscript $\blacktriangleleft_n$ represents $\tilde{C}_q(n)$. Hidden extensions are depicted with orange lines.}\label{Fig:FqEooL}
\end{figure}

\begin{figure}[!htb]
\centering
\includegraphics[scale=.75]{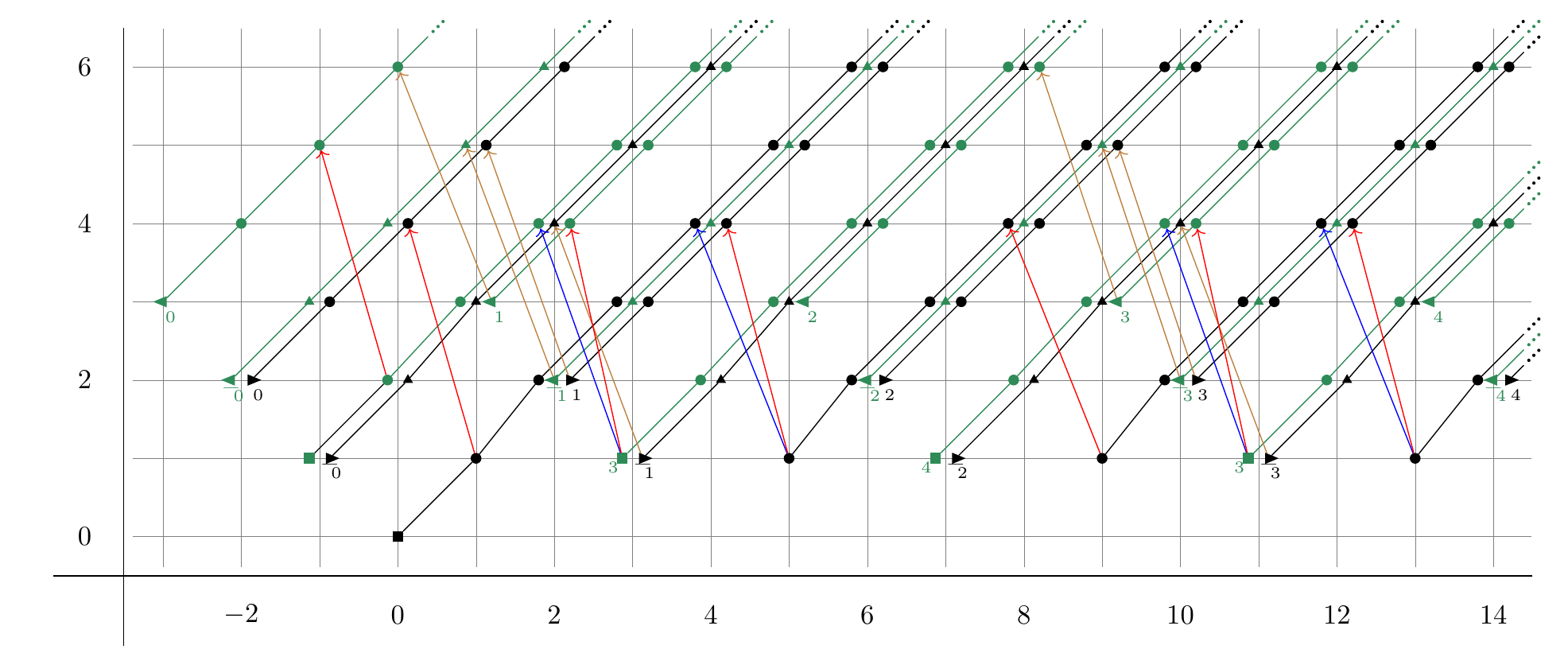}
\caption{ 
The $E_1$-page of the ESSS for $L$ over $\q_2$. A bullet $\bullet$ represents $\z/2[\tau]$, 
a triangle $\blacktriangle$ represents $(\z/2[\tau])^{3}$,
a square $\blacksquare$ represents $\z$, 
a square with the positive integer $n$ as its left subscript ${}_n\blacksquare$ represents $\z/2^n$, 
an isosceles triangle with apex pointing left and nonnegative integer $n$ as its right subscript $\blacktriangleleft_n$ ($\underline \blacktriangleleft_n$) represents $C_2(n)$ ($\underline C_2(n)$), 
and an isosceles triangle with apex pointing right and nonnegative integer $n$ as its right subscript $\blacktriangleright_n$ ($\underline \blacktriangleright_n$) represents ${K}_2(n)$ ($\underline{K}_2(n)$).
Elements of the form $\iota x$ appear in dark green.
The differentials are $h_1$-periodic; we only draw the first occurrences.
}\label{Fig:Q2E1L}
\end{figure}

\begin{figure}[!htb]
\centering
\includegraphics[scale=.75]{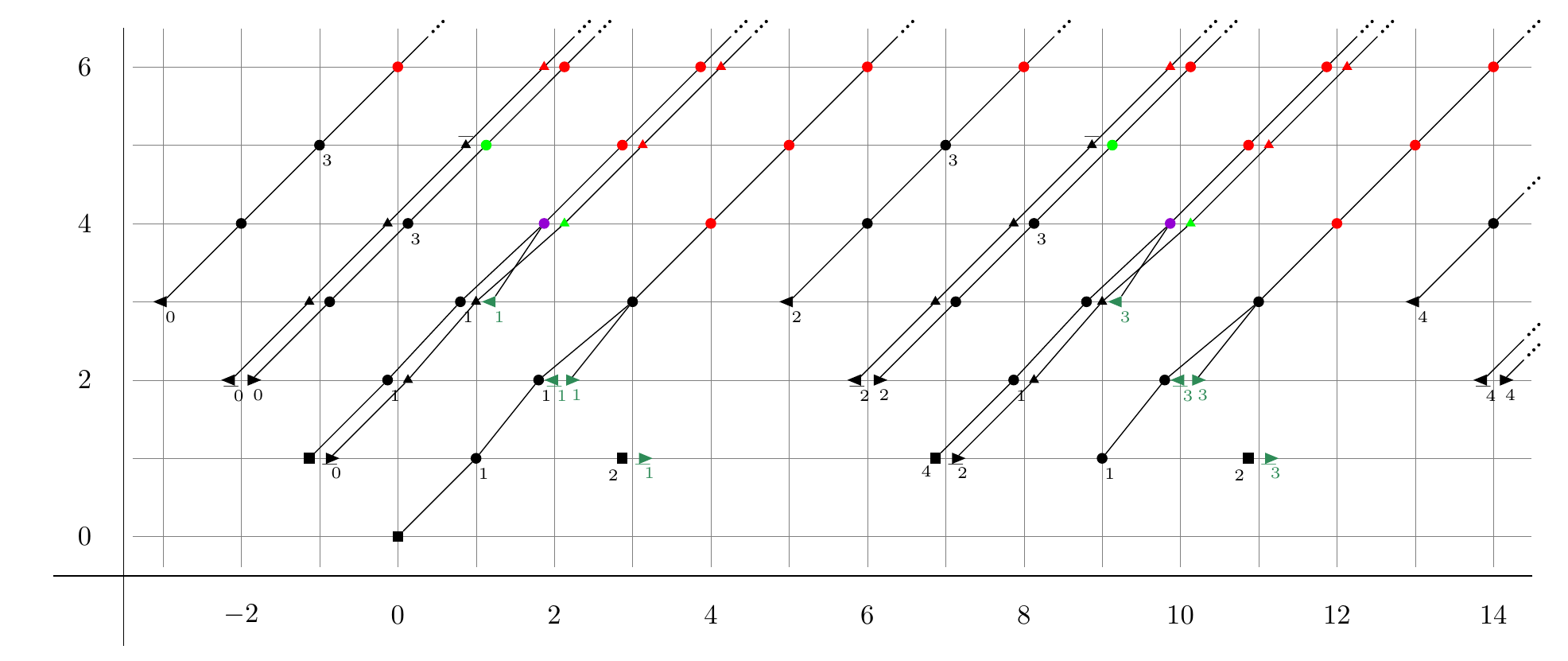}
\caption{The $E_\infty$-page of the ESSS for $L$ over $\q_2$. A bullet $\bullet$ represents $\z/2[\tau]$, a red bullet \textcolor{red}{$\bullet$} represents $\z/2$, a green bullet \textcolor{green}{$\bullet$} represents $\z/2[\tau^4]$, a bullet with subscript $\bullet_n$ represents $\z/2[\tau^4]\{1,\tau^n\}$, a lavender bullet \hana{$\bullet$} represents $\z/2[\tau]\{\tau^2\}\oplus \z/2$,
a triangle $\blacktriangle$ represents $(\z/2[\tau])^3$, a red triangle $\color{red} \blacktriangle$ represents $(\z/2)^3$, a overlined triangle $\overline \blacktriangle$ represents $\z/2[\tau^2]\oplus \z/2[\tau^2]\{\tau^3\}\oplus (\z/2)^2$, a green triangle $\color{green} \blacktriangle$ represents $\z/2[\tau^2]\oplus \z/2[\tau]\oplus \z/2[\tau^4]\{1,\tau,\tau^3\}$,
a square $\blacksquare$ represents $\z$, a square with the positive integer $n$ as its left subscript ${}_n\blacksquare$ represents $\z/2^n$, 
an isosceles triangle with apex pointing left and nonnegative integer $n$ as its right subscript $\blacktriangleleft_n ({\color{seaolive}\blacktriangleleft_n}, \underline{\blacktriangleleft_n}, {\color{seaolive}\underline \blacktriangleleft_n})$ represents $C_2(n)$ ($\tilde C_2(n)$, $\underline C_2(n)$, $\tilde {\underline C_2}(n)$), 
and an isosceles triangle with apex pointing right and nonnegative integer $n$ as its right subscript $\blacktriangleright_n ({\color{seaolive} \blacktriangleright_n}, {\underline \blacktriangleright_n}, {\color{seaolive}\underline \blacktriangleright_n})$ represents ${K}_2(n)$ ($\tilde { K_2}(n)$, ${\underline K_2}(n)$, $\tilde {\underline K_2}(n)$). 
}
\label{Fig:Q2E2L}
\end{figure}

\clearpage

\section{Tables}\label{Sec:Tables}

In this section, we provide tables describing the homotopy groups of $L$ over certain base fields. We emphasize that since $L$ is not known to be a ring spectrum, the tables only give additive information. In particular, we note that $\pi_{**}^{\r}(L)$ is only additively isomorphic to the underlying bigraded abelian group of the bigraded ring described in the relevant table. 

\subsection{Algebraically closed fields}



\begin{longtable}{llll}
\caption{Additive generators for $\pi_{**}^{\c}(L)$: $i,j,k \geq 0$. \label{Table:CLGens}} \\
\hline
Additive generator & Degree & Constraints & Degree of $\mathsf{h}$-torsion \\
\hline \endfirsthead
\caption[]{Additive generators for $\pi_{**}^{\c}(L)$} \\
\hline
Additive generator & Degree & Constraints & $\mathsf{h}$-torsion \\
\hline \endhead
\hline \endfoot
$\tau^i$ & $(0,-i)$ & & $\infty$ \\
$\iota v_1^{4k}h_1^j \tau^i$ & $(8k+j-1,4k+j-i)$ & $j \leq 2$ if $i >0$ & $\infty$ if $j=k=0$; \\
& & & $\nu(k)+4$ if $j=0$ and $k>0$; \\
& & &  $1$ if $j \geq 1$ \\ 
$\iota v_1^{4k+2} \tau^i$ & $(8k+3,4k+2-i)$ & & $3$ \\
$v_1^{4k} h_1^{j+1} \tau^i$ & $(8k+j+1, 4k+j+1-i)$ & $j \leq 2$ if $i >0$ & $1$
\end{longtable}

There is one additive relation between these generators:
$$4 \cdot \iota v_1^{4k+2} \tau^i = v_1^{4k} h_1^3 \tau^{i+1}.$$

\subsection{Finite fields}

\begin{longtable}{llll}
\caption{Additive generators for $\pi_{**}^{\f_q}(L)$: $i,j,k \geq 0$. \label{Table:FqLGens}} \\
\hline
Additive generator & Degree & Constraints & Degree of $\mathsf{h}$-torsion \\
\hline \endfirsthead
\caption[]{Additive generators for $\pi_{**}^{\f_q}(L)$} \\
\hline
Additive generator & Degree & Constraints & $\mathsf{h}$-torsion \\
\hline \endhead
\hline \endfoot
$1$ & $(0,0)$ & & $\infty$ \\
$\iota v_1^{2k}$ & $(4k-1,2k)$ & & $\infty$ if $k=0$ \\
& & &  $\nu(k)+3$ if $k>0$ \\
$\iota v_1^{4k} h_1^{j+1} \tau^i$ & $(8k+j,4k+j-1-i)$ & $i \neq 1$ if $j=2$; & 1 \\
& & $i=0$ if $j \geq 3$ & \\
$x_q \tau^i$ & $(-1,-i)$ & & $s_q(i)$ \\
$2 x_q v_1^{2k}\tau^i$ & $(4k-1,2k-1-i)$ & $2 \leq s_q(i) \leq \nu(k)+3$ & $s_q(i)$\\
$2^{s_q(i) - \nu(k)-3} x_q v_q^{2k} \tau^i$ & $(4k-1,2k-1-i)$ & $s_q(i)>3$ & $\nu(k)+3$ \\
$x_q v_1^{4k} h_q^{j+1} \tau^i$ & $(8k+j,4k+j-i)$ & $i=0$ or $s_q(i-1)>3$ if $j=2$; & $1$ \\
& & $i=0$ if $j \geq 3$ & \\
$\iota x_q h_1^j \tau^i$ & $(j-2,j-1-i)$ & $i=0$ if $j \geq 3$ & $s_q(i)$ if $j=0$; \\
& & & $1$ if $j>0$ \\
$2 \iota x_q v_1^{4k} \tau^i$ & $(8k-2, 4k-1-i)$ & $k>0$ & $s_q(i)$ if $s_q(i) \leq \nu(k)+4$; \\
& & & $s_q(i)-\nu(k)-4$ else \\
$\iota x_q v_1^{4k+2} \tau^i$ & $(8k+2,4k+1-i)$ & & $s_q(i)$ if $s_q(i) \leq \nu(k)+4$; \\
& & & $s_q(i)-\nu(k)-4$ else \\
$\iota x_q v_1^{4k} h_1^{j+1} \tau^i$ & $(8k-1+j,4k+j-i)$ & $i=0$ if $j \geq 2$ & $1$
\end{longtable}

There are two additive relations between these generators:
$$4 \cdot \iota v_1^{4k+2} \tau^i = v_1^{4k} h_1^3 \tau^{i+1},$$
$$4 \cdot \iota x_q v_1^{4k+2} \tau^i = x_q v_1^{4k} h_1^3 \tau^{i+1},$$
which hold whenever the relevant elements are defined.

\subsection{$\q_q$ for $q$ odd}

The coefficients $\pi_{**}^{\q_q}(L)$ are determined by the additive isomorphism
$$\pi_{**}^{\q_q}(L) \cong \pi_{**}^{\f_q}(L)\{1,\pi\}.$$
A table of generators for $\q_q$, $q$ odd, can be obtained from \cref{Table:FqLGens} by adding a generator  `$\pi x$' for each additive generator $x$ in degree $deg(x)-(1,1)$ with the same constrains and $\mathsf{h}$-torsion degree as $x$.

\subsection{$\r$}
The cases for $\r$ is more complicated due to the $\rho$ multiples. 

\begin{longtable}{lllll}
\caption{Multiplicative generators for $\pi_{**}^{\r}(L)$: $k \geq 0$. \label{Table:RLGens}} \\
\hline
Name & Degree & $\mathsf{h}$-torsion & $\rho$-torsion &  $\eta$-torsion \\
\hline \endfirsthead
\\
\hline
Name & Degree & $\mathsf{h}$-torsion & $\rho$-torsion &  $\eta$-torsion \\
\hline \endhead
\hline \endfoot
coweight$\equiv 0 (4)$ &&&&\\
\hline
$\mathsf{h}$ & $(0,0)$ & $\infty$   & 1 & 1\\
$\rho$ & $(-1,-1)$ & 1 &   $\infty$ & $\infty$\\
 $\eta$ & $(1,1)$ & 1 &   $\infty$ & $\infty$\\
 $2\tau^{4j}, j\geq 1$ & $(0,-4j)$ & $\infty$& 1 & 1\\
 $\iota \tau^{1+4j} h_1 v_1^{4k}$ &$(8k,4k-4j)$ &2&2&2\\
 \hline
 coweight$\equiv 1 (4)$ &&&&\\
\hline
$\iota (\tau h_1)^2 v_1^{4k}\tau^{4j}$ &$(8k+1,4k-4j)$& 2&2&2\\
$\tau^{1+4j} h_1 v_1^{4k}$ &$(8k+1,4k-4j)$& 2&3&3\\
$\iota v_1^{4k} \cdot 2\tau^{2+4j}$&$(8k-1,4k-2-4j)$& $\nu(k)+3$&1&3\\
$\iota 2v_1^{4k+2}\tau^{4j}$&$(8k+3,4k+2-4j)$& 3&3&1\\
\hline 
coweight$\equiv 2 (4)$ &&&&\\
\hline
$(\tau h_1)^2 v_1^{4k}\tau^{4j}$ &$(8k+2,4k-4j)$& 2&2&2\\
$2\tau^{2+4j}$&$(0,-2-4j)$& $\infty$&1&3\\
\hline
 coweight$= -1 $ &&&&\\
 \hline
 $\iota$ &(-1,0)&$\infty$&$\infty$&$\infty$\\
 \hline 
 coweight$\equiv 2^{n-1}-1 (2^n), n\geq 3 $ &&&&\\
 \hline
   $(\tau h_1)^3 2\tau^{2^{n-1}-4k+2^nj-3}v_1^{4k}$ &$(8k+3,8k+4-2^{n-1}-2^nj)$&2&1&1\\
   $\iota \tau^{2^{n-1}-4k+2^nj}v_1^{4k}$ &$(8k-1,8k-2^{n-1}-2^nj)$
 &$\nu(k)+4$&&\\
 $\iota 2\tau^{2^{n-1}-4k+2^nj-2}v_1^{4k+2}$ &$(8k+3,8k+4-2^{n-1}-2^nj)$&4&&\\
\end{longtable}

We take $\nu(0)$ to be $\infty$.
The $\rho$-torsion and $\eta$-torsion of the last two lines are given by the following relations.

$\iota \tau^{2^{n-1}-4k+2^nj}\cdot (\eta\rho)^{n}\eta=0.$

$\iota \tau^{2^{n-1}-4k+2^nj}v_1^{4k}\cdot (\eta\rho)^{n}\rho=0.$

$\iota \tau^{2^{n-1}+2^nj}\cdot (\eta\rho)^n=0$.

$\iota 2\tau^{2^{n-1}-4k+2^nj-2}v_1^{4k+2} \cdot \rho = \iota \tau^{2^{n-1}-4k+2^nj}v_1^{4k}\cdot \eta^3$.

$\iota 2\tau^{2^{n-1}-4k+2^nj-2}v_1^{4k+2} \cdot \eta = \iota \tau^{2^{n-1}-4k-4+2^nj}v_1^{4k+4}\cdot \rho^3$.

$2\iota 2\tau^{2^{n-1}-4k+2^nj-2}v_1^{4k+2} \cdot \mathsf{h} = (\tau h_1)^3 2\tau^{2^{n-1}-4k+2^nj-3}v_1^{4k}$.

$(\tau h_1)^3 2\tau^{2^{n-1}-4k+2^nj-3}v_1^{4k}\cdot \mathsf{h} =
\iota \tau^{2^{n-1}-4k+2^nj}v_1^{4k}\cdot \eta^4\cdot (\eta\rho)^{n-1}
$.

$2\cdot (\tau h_1 v_1^{4k})=0$.

$\eta^2\rho\cdot (\tau h_1 v_1^{4k})=0$.

$\eta\rho^2\cdot (\tau h_1 v_1^{4k})=0$.

$\rho^2\cdot (\tau h_1)=\mathsf{h}\cdot \iota \cdot 2\tau^2=0$.

$\rho^2\cdot (\tau h_1)v_1^{4k}=\mathsf{h}\cdot \iota v_1^{4k}\cdot 2\tau^2$.

$\eta \cdot \iota v_1^{4k} \cdot 2\tau^2= \iota (\tau h_1)^2 v_1^{4k}\cdot \rho$.

$\eta^2\cdot (\tau h_1)v_1^{4k}=\mathsf{h}\cdot \iota 2v_1^{4k+2}$.

$\rho \cdot \iota v_1^{4k+2}= \iota (\tau h_1)^2 v_1^{4k}\cdot \eta$.

$2\tau^2\cdot \eta=\rho\cdot (\tau h_1)^2$.

\subsection{$\q_2$ and $\q$}

It is possible to produce tables of additive generators for $\pi_{**}^{\q_2}(L)$ and $\pi_{**}^{\q}(L)$, but they would be quite long and (in our view) not particularly enlightening. We have therefore chosen to omit them from this paper.

\bibliographystyle{alpha}
\bibliography{master}

\end{document}